\newtheorem{thm}{Théorème}[section]
\newtheorem{prop}[thm]{Proposition}
\newtheorem{cor}[thm]{Corollaire}
\newtheorem{lm}[thm]{Lemme}
\newtheorem*{thm_i}{Théorème}
\newtheorem*{cor_i}{Corollaire}
\theoremstyle{definition}
\newtheorem{df}[thm]{Définition}
\newtheorem{num}[thm]{}
\theoremstyle{remark}
\numberwithin{equation}{thm}
\newcommand{\A}{\mathscr A}
\newcommand{\B}{\mathscr B}
\newcommand{\T}{\mathscr T}
\newcommand{\ab}{\mathscr Ab}
\newcommand{\sm}{\mathscr L_k}
\newcommand{\smc}{\mathscr L^{cor}_k}
\newcommand{\ftr}{Sh^{tr}(k)}
\newcommand{\ftrgr}{\mathbb N\!-\!\ftr}
\newcommand{\hftr}{HI(k)}
\newcommand{\hftrgr}{\mathbb Z\!-\!\hftr}
\newcommand{\tatemod}{\tatex *\!-\!\mathrm{mod}}
\newcommand{\Tmod}{T^*\!-\!\mathrm{mod}}
\newcommand{\hmtr} {HI_*(k)}
\newcommand{\hmtro} {HI^{(0)}(k)}
\newcommand{\modl} {\mathscr{M}Cycl(k)}
\newcommand{\Vdmme} {DM_{-}^{eff}\!(k)}
\newcommand{\DM} {DM\!(k)}
\newcommand{\DMe} {DM^{eff}\!(k)}
\newcommand{\DMb} {DM^{\circ}\!(k)}
\newcommand{\DMgme} {DM_{gm}^{eff}\!(k)}
\newcommand{\DMgmb} {DM_{gm}^{\circ}\!(k)}
\newcommand{\DMgm} {DM_{gm}\!(k)}
\newcommand{\DMgmo} {DM_{gm}^{(0)}\!(k)}
\newcommand{\SHtop} {\mathscr{SH}_{top}}
\newcommand{\NN} {{\mathbb N}}
\newcommand{\ZZ} {{\mathbb Z}}
\newcommand{\QQ} {{\mathbb Q}}
\newcommand{\cO}{\mathcal O}
\newcommand{\W}{\mathcal W}
\newcommand{\cS}{\mathcal S}
\renewcommand{\AA}{\mathbb A}
\newcommand{\GG}{\mathbb G_m}
\newcommand{\GGx}[1]{\mathbb G_{m,#1}}
\newcommand{\PP}{\mathbb P}
\newcommand{\un}{\mathbbm 1}
\newcommand{\tate}{S^1_t}
\newcommand{\tatex}[1]{S^{#1}_t}
\newcommand{\htatex}[1]{\hat S^{#1}_t}
\DeclareMathOperator{\Ker}{Ker}
\DeclareMathOperator{\coKer}{coKer}
\DeclareMathOperator{\Comp}{C}
\DeclareMathOperator{\Der}{D}
\DeclareMathOperator{\sing}{\underline C^*_{sing}}
\newcommand{\hlrep}{h_0}
\newcommand{\hhlrep}{\hat h}
\newcommand{\derL}{\mathrm L}
\newcommand{\derR}{\mathrm R}
\DeclareMathOperator{\uH} {\underline H}
\newcommand{\upH} {\smash{\breve{\underline{\mathrm{H}}}}}
\newcommand{\ugH}[1]{\underline H_{#1}}
\newcommand{\umH}{\smash{\hat{\underline{\mathrm{H}}}}}
\DeclareMathOperator{\otr}{\otimes^{tr}}
\DeclareMathOperator{\otrL}{\otimes^{tr,\derL}}
\DeclareMathOperator{\otrgr}{\hat \otimes^{tr}}
\DeclareMathOperator{\ohtr}{\otimes^{Htr}}
\DeclareMathOperator{\ohtrgr}{\hat \otimes^{Htr}}
\newcommand{\rep}[1]{\ZZ^{tr}(#1)}
\newcommand{\repNP}{\ZZ^{tr}}
\DeclareMathOperator{\Stab} {\Sigma^{\infty}}
\DeclareMathOperator{\Lop} {\Omega^{\infty}}
\DeclareMathOperator{\hStab} {\sigma^{\infty}}
\DeclareMathOperator{\hLop} {\omega^{\infty}}
\DeclareMathOperator{\Drap} {Drap}
\newcommand{\ilim}[1] { \underset{#1}{\varinjlim} \ }
\newcommand{\pplim}[1] { \underset{#1}{"\varprojlim"} \ }
\DeclareMathOperator{\tra}{{}^t \!}
\newcommand{\doto} {\bullet\!\!\!\longrightarrow}
\newcommand{\spec}[1] {\operatorname{\mathrm{Spec}}(#1)}
\DeclareMathOperator{\Hom}{Hom}
\DeclareMathOperator{\uHom}{\underline{Hom}}
\DeclareMathOperator{\RHom}{\derR \underline{Hom}}
\newcommand{\et}{\text{\'et}}
\newcommand{\nis}{\text{Nis}}
\newcommand{\zar}{\text{Zar}}
\DeclareMathOperator\pro{pro}
\newcommand{\C}{\mathscr C}
\newcommand{\cI}{\mathcal I}
\newcommand{\cM}{\mathcal M}
\newcommand{\sP}{\mathscr P}
\newcommand{\cX}{\mathcal X}
\newcommand{\cY}{\mathcal Y}
\DeclareMathOperator{\Tot}{Tot}
\newcommand{\E}{\mathbb E}
\newcommand{\F}{\mathbb F}
\newcommand{\cE}{\mathcal E}
\begin{document}

\sloppy
\setcounter{secnumdepth}{3}
\setcounter{tocdepth}{2}

\title{Modules homotopiques}
\alttitle{Homotopy modules}

\author{Frédéric Déglise}
\date{septembre 2009}

\address{CNRS, LAGA (UMR 7539),
 Institut Galil\'ee,
 Universit\'e Paris 13,
 99 avenue Jean-Baptiste Cl\'ement,
 93430 Villetaneuse
 FRANCE \\
 Tel.:  +33 1 49 40 35 77
}

\email{deglise@math.univ-paris13.fr}
\urladdr{http://www.math.univ-paris13.fr/~deglise/}
              
\email{deglise@math.univ-paris13.fr}

\begin{abstract}
En s'appuyant sur certains de nos précédents articles,
 nous comparons au-dessus d'un corps parfait $k$
 la catégorie des faisceaux invariants par homotopie avec transferts
 introduite par V.~Voevodsky à celle des modules de cycles introduite par M.~Rost:
 la première est une sous-catégorie pleine de la seconde.
Utilisant la construction récente par D.C~Cisinski et l'auteur d'un version
 non effective $DM(k)$ de la catégorie des complexes motiviques,
 nous étendons ce résultat et montrons que la catégorie des modules de cycles
 et le coeur d'une t-structure naturelle sur $DM(k)$ qui généralise la
 t-structure homotopique des complexes motiviques.
Nous illustrons ces résultats en prouvant que la suite spectrale du coniveau
 pour la cohomologie d'un $k$-schéma lisse représentée par un objet de $DM(k)$
 co\"incide avec la suite spectrale d'hypercohomologie pour la t-structure
 homotopique, généralisant un résultat classique de S.~Bloch et A.~Ogus.
\end{abstract}

\begin{altabstract}
Based on previous works,
 we compare over a perfect field $k$ 
 the category of homotopy invariant sheaves with transfers introduced
 by V.~Voevodsky and the category of cycle modules introduced by M.~Rost:
 the former is a full subcategory of the latter.
Using the recent construction by D.C.~Cisinski and the author of a non effective
 version $DM(k)$ of the category of motivic complexes,
 we show that cycle modules form the heart of a natural t-structure on $DM(k)$,
 generalising the homotopy t-structure on motivic complexes.
We enlighten these results by proving that the coniveau spectral sequence
 for the cohomology of a smooth $k$-scheme represented by an object of $DM(k)$
 coincides with the hypercohomology spectral sequence for the homotopy t-structure,
 generalising a classical result of S.~Bloch and A.~Ogus
\end{altabstract}

\subjclass{14F42, (14C15, 14C35)}
\keywords{motifs mixtes, complexes motiviques, modules de cycles, filtration par coniveau}
\thanks{L'auteur est partiellement financé par l'ANR, projet no.\ ANR-07-BLAN-0142 \og M\'ethodes \`a la Voevodsky, motifs mixtes et G\'eom\'etrie d'Arakelov \fg.}

\maketitle

\tableofcontents

\mainmatter

\subsection*{Introduction}

\paragraph{Théorie de Voevodsky}

Dans sa théorie des complexes motiviques sur un corps parfait $k$,
 V. Voevodsky introduit le concept central de faisceau Nisnevich 
 invariant par homotopie avec transferts,
 que nous appellerons simplement \underline{faisceau homotopique}.
Rappelons qu'un faisceau homotopique $F$ est un préfaisceau de groupes abéliens
 sur la catégorie des $k$-schémas algébriques lisses,
 fonctoriel par rapport aux correspondances finies à homotopie près, 
 qui est un faisceau pour la topologie de Nisnevich.
Un exemple central d'un tel faisceau est donné par le préfaisceau $\GG$ qui
 à un schéma lisse $X$ associe le groupe des sections globales inversibles sur $X$.
La catégorie des faisceaux homotopiques, notée ici $\hftr$,
 a de bonnes propriétés
 que l'on peut résumer essentiellement en disant que c'est une catégorie
 abélienne de Grothendieck, monoïdale symétrique fermée.

Un des points centraux de la théorie est la démonstration par Voevodsky
 que tout faisceau homotopique $F$ admet une \emph{résolution de Gersten}\footnote{
Les complexes du type \eqref{gersten}, ci-dessous, ont été introduits par Grothendieck
 sous le nom \emph{résolution de Cousin}, remplaçant la théorie des faisceaux homotopiques
 par celle des faisceaux cohérents.
Grâce à la suite spectrale associée à la filtration par coniveau d'après Grothendieck,
 ils ont été réintroduits un peu plus tard dans le contexte des théories cohomologiques,
 par Brown et Gersten en K-théorie et finallement par Bloch et Ogus dans une version 
 axiomatique. Notons que ces derniers auteurs parlent plutôt de 
 \og\emph{arithmetic resolution}\fg~
 et il semble que le terme de \emph{résolution de Gersten} se soit imposé par la suite.
La fonctorialité de la résolution de Gersten par rapport
 à un morphisme de schémas lisses a été traitée dans le cas
 de la K-théorie par H.~Gillet (voir \cite{Gillet}) puis étendue
 dans le cas des modules de cycles par M.~Rost.}.
Un cas particulier de ce résultat est le fait que pour tout schéma lisse $X$,
 le groupe abélien $F(X)$ admet une résolution par un complexe de la forme:
 \begin{equation}\tag{$\mathcal G$} \label{gersten}
C^*(X,\hat F_*):\bigoplus_{x \in X^{(0)}} \hat F(\kappa(x))
 \rightarrow \hdots \bigoplus_{x' \in X^{(n)}} \hat F_{-n}(\kappa(x'))
 \rightarrow \hdots
\end{equation}
Suivant Voevodsky, $F_{-n}=\uHom_{\hftr}(\GG^{\otimes n},F)$.
On a noté $X^{(n)}$ l'ensemble des points de codimension $n$ de $X$.
En degré $n$, ce complexe est formé par la somme des fibres du faisceau
$F_{-n}$ en les points du topos Nisnevich définis par $\kappa(x)$,
le corps résiduel de $x$.

Un corollaire de cette résolution de Gersten est que les faisceaux homotopiques
 sont essentiellement déterminés par leurs fibres en un corps de fonctions.
 La question centrale de cet article est de savoir jusqu'à quel point ils le sont.

\paragraph{Théorie de Rost}
 
Pour définir un complexe de Gersten, du type \eqref{gersten}, 
 on remarque qu'il faut essentiellement se donner un groupe abélien 
 pour chaque corps résiduel d'un point de $X$.
M.~Rost axiomatise cette situation en introduisant les \underline{modules de cycles}.
Un module de cycles est un foncteur $\phi$ 
 de la catégorie des corps de fonctions au-dessus de $k$ vers les groupes
 abéliens gradués,
 muni d'une fonctorialité étendue qui permet de définir un complexe $C^*(X,\phi)$ 
 du type \eqref{gersten}. 
Pour avoir une idée de cette fonctorialité,
 le lecteur peut se référer aux propriétés de la K-théorie de Milnor
  -- mais aussi à la théorie des modules galoisiens.
Rost note l'analogie entre ce complexe et le groupe des cycles de $X$
 -- comme l'avaient fait Bloch et Quillen avant lui -- et utilise le traitement de
 la théorie de l'intersection par Fulton pour montrer que la \emph{cohomologie} du complexe,
 notée $A^*(X,\phi)$, est naturelle en $X$ par rapport aux morphismes de schémas lisses.

\paragraph{Une comparaison}

Répondant à la question finale du premier paragraphe,
 nous comparons la théorie de Rost et celle de Voevodsky.
D'une manière vague, notre résultat principal affirme
 que l'association $F \mapsto \hat F_*$ définit un foncteur
 pleinement fidèle des faisceaux homotopiques dans les modules de cycles,
 avec pour quasi-inverse à gauche le foncteur $\phi \mapsto A^0(.,\phi)$.

Pour être plus précis dans la formulation de ce résultat,
 on est conduit à élargir la catégorie des faisceaux homotopiques.
On définit un \underline{module homotopique} $F_*$ comme un faisceau homotopique
$\ZZ$-gradué muni d'isomorphismes $\epsilon_n:F_n \rightarrow (F_{n+1})_{-1}$.
La catégorie obtenue, notée $\hmtr$, est encore abélienne de Grothendieck,
 symétrique monoïdale fermée.
De plus, elle contient comme sous-catégorie pleine la catégorie $\hftr$ --
si $F$ est un faisceau homotopique, le module homotopique associé 
 a pour valeur $\GG^{\otimes n} \otimes F$ (resp. $F_{-n}$) en degré $n \geq 0$
 (resp. $n<0$).

Dès lors, 
 on peut montrer que le système $\hat F_*$ 
 des fibres d'un module homotopique $F_*$
 en un corps de fonctions définit un module de cycles.
De plus, pour tout module de cycles $\phi$,
 le groupe $A^0(X,\phi)$,
  dépendant fonctoriellement d'un schéma lisse $X$,
   définit un module homotopique.
\begin{thm_i}[\emph{cf.} \ref{thm:main}]
Les deux associations décrites ci-dessus
 définissent des fonteurs quasi-inverses l'un de l'autre.
\end{thm_i}
La résolution de Gersten obtenue par Voevodsky est maintenant
 équivalente au résultat suivant:
\begin{cor_i}[\emph{cf.} \ref{iso_coh_gpe_chow}]
Si $F_*$ est un module homotopique et $X$ un schéma lisse,
 $H^n(X,F_*)=A^n(X,\hat F_*)$.\footnote{L'identification obtenue 
 ici est naturelle,
 non seulement par rapport au pullback,
 mais aussi par rapport aux correspondances finies
 et par rapport au pushout par un morphisme projectif.}
\end{cor_i}
 
\paragraph{L'interprétation motivique}

Rappelons qu'un complexe motivique est un complexe\footnote{Originellement,
 ces complexes sont supposés bornés supérieurement. 
Nous abandonnons cette hypothèse dans tout l'article suivant \cite{CD1}.}
 de faisceaux Nisnevich avec transferts dont les faisceaux de cohomologie
 sont des faisceaux homotopiques.
La catégorie des complexes motiviques $\DMe$ porte ainsi naturellement
 une t-structure au sens de Beilinson, Bernstein et Deligne dont le coeur 
 est la catégorie $\hftr$.

La catégorie $\DMe$ est triangulée monoïdale symétrique fermée.
Elle contient comme sous catégorie pleine la catégorie des motifs
 purs modulo équivalence rationnelle définie par Grothendieck.
C'est ainsi une catégorie \og effective \fg,
 dans le sens où le motif de Tate $\un(1)$ n'a pas de $\otimes$-inverse.
Suivant l'approche initiale de Grothendieck,
 on est conduit à introduire une version non effective
 des complexes motiviques ; c'est ce qui est fait par D.C.~Cisinski et l'auteur
  dans \cite{CD1}.
Il est naturel dans le contexte des complexes motiviques
 de remplacer la construction habituelle pour inverser $\un(1)$
 par l'approche des topologues pour définir la catégorie homotopique
 stable $\SHtop$.
La catégorie $\DM$, dont les objets seront appelés les \underline{spectres motiviques},
 est ainsi construite à partir du formalisme des spectres
 et des catégories de modèles.
C'est la catégorie monoïdale homotopique\footnote{c'est-à-dire la catégorie homotopique
 associée à une catégorie de modèles monoïdale.} \emph{universelle} 
 munie d'un foncteur dérivé monoïdal
$$
\Stab:\DMe \rightarrow \DM
$$
admettant un adjoint à droite $\Lop$ et telle que l'objet $\Stab \un(1)$
est $\otimes$-inversible.
Notons que dans le cadre des complexes motiviques, 
 le foncteur $\Stab$ est pleinement fidèle
 d'après le théorème de simplification de Voevodsky \cite{V3}. 

Dans cet article,
 nous montrons que l'on peut étendre la définition de la t-structure homotopique 
 à la catégorie $\DM$, de telle manière que le foncteur $\Lop$ est t-exact.
Le coeur de la t-structure homotopique sur $\DM$ est la catégorie $\hmtr$
 des modules homotopiques,
 qui est donc canoniquement identifiée à la catégorie des modules de cycles
 d'après le théorème \ref{thm:main} déjà cité.

Ceci nous permet de donner une interprétation frappante du module de cycles
 $\hat F_*$ associé à un module homotopique $F_*$, 
 à travers la notion de motifs génériques de \cite{Deg5}.\footnote{Cette
  notion a aussi été introduite par A.Beilinson dans \cite{Bei}.}
Le motif générique associé à un corps de fonctions $E$ est le pro-motif
 défini par tous les modèles lisses de $E$. 
On considère la catégorie $\DMgmo$ formée par tous les twists de motifs génériques 
 par $\un(n)[n]=\un\{n\}$ pour $n \in \ZZ$.
Alors, $\hat F_*$ est simplement la restriction du foncteur représenté par $F_*$
 dans $\DM$ à la catégorie $\DMgmo$. La catégorie $\DMgmo$ est une catégorie de 
 \og points \fg~pour les spectres motiviques, et la fonctorialité des modules
 de cycles est interprétée en termes de \emph{morphismes de spécialisations}
 entre ces points.
De ce point de vue, les modules homotopiques correspondent à des systèmes locaux
 où le groupoïde fondamental est remplacé par la catégorie $\DMgmo$.

\paragraph{Filtration par coniveau}

Comme on l'a déjà mentionné,
 la filtration par coniveau d'un schéma lisse $X$ est liée à la résolution de Gersten:
elle induit pour tout module homotopique $F_*$ une suite spectrale
convergente
$$
E_{1,c}^{p,q}(X,F_*) \Rightarrow H^{p+q}(X,F_*)
$$
dont le premier terme est concentré sur la ligne $q=0$
 et s'identifie \emph{canoniquement} au complexe $C^*(X,\hat F_*)$.
Sa dégénérescence explique donc le corollaire \ref{iso_coh_gpe_chow}.

Ce résultat peut être étendu aux spectres motiviques.
Pour un schéma lisse $X$ et un spectre motivique $\E$,
 on note $H^p(X,\E)$ le groupe des morphismes dans $\DM$
 entre le motif de $X$ et $E[p]$. On note aussi $\uH^q_*(\E)$
  (resp. $\umH^q_*(\E)$)
 le module homotopique (resp. module de cycles)
 correspondant au $q$-ième groupe de cohomologie pour la t-structure
 homotopique.
La filtration par coniveau sur $X$ induit une suite spectrale
$$
E_{1,c}^{p,q}=C^p(X,\umH^q_*(\E))_0 \Rightarrow H^{p+q}(X,\E).
$$
Par ailleurs, la filtration sur $\E$ pour la t-structure homotopique
détermine une suite spectrale
$$
E_{2,t}^{p,q}=H^p(X,\uH^q_*(\E))_0 \Rightarrow H^{p+q}(X,\E).
$$
\begin{thm_i}[\emph{cf.} \ref{thm:comparaison_ssp}]
L'identification du corollaire \ref{iso_coh_gpe_chow} détermine
 un isomorphisme canonique de suites spectrales
$$
E_{r,c}^{p,q} = E_{r,t}^{p,q}, \ r \geq 2.
$$
\end{thm_i}
On en déduit que la filtration par coniveau relativement à $X$ 
 sur $H^*(X,\E)$ coïncide avec la filtration pour la t-structure homotopique
 relativement à $\E$.
On en déduit aussi que la suite spectrale du coniveau satisfait
 de bonnes propriétés de fonctorialité par rapport au schéma lisse $X$:
 compatibilité aux pullbacks, pushouts, transferts
 et à l'action de la cohomologie motivique
 (\emph{cf.} \ref{fct_coniv} pour plus de précisions).

Ce résultat doit être comparé à un théorème de Bloch et Ogus
 montrant que la suite spectrale du coniveau 
 sur la cohomologie de De Rham coincide avec la suite spectrale 
 d'hypercohomologie Zariski du complexe de De Rham (\emph{cf.} \cite{BO}).
Plus précisément, on retrouve ce théorème grâce à la notion de
 \emph{théorie de Weil mixte} introduite par Cisinski et l'auteur
 dans \cite{CD2}.
 Toute théorie de Weil mixte correspond en effet à un spectre motivique
 $\E$ auquel on peut appliquer le théorème précédent, et la cohomologie
 de De Rham en caractéristique $0$ est un cas particulier de théorie
 de Weil mixte.
Pour retrouver le résultat sur la cohomologie de De Rham,
 il faut préciser que le module homotopique $\uH^q_*(\E)$
 coincide en degré $0$ avec la \emph{cohomologie non ramifiée}\footnote{Ce type de
 faisceau a été introduit pour la première fois dans \cite{BO} mais la terminologie
 utilisée ici est apparue un peu plus tard.} associée à $\E$,
 à savoir le faisceau Zariski associé au préfaisceau $X \mapsto H^q(X,\E)$.
De ce point de vue, les résultats de cet article apparaissent 
 dans le prolongement direct des travaux de Bloch et Ogus ; 
 la notion de module homotopique exprime ainsi 
 la \emph{structure} des faisceaux de cohomologie non ramifiée.
 
\paragraph{Plan du travail}
L'article est divisé en trois parties.

Dans la première partie,
 on introduit la catégorie des modules homotopiques (def. \ref{df:mod_htp})
 après quelques rappels sur les faisceaux homotopiques.
On rappelle aussi les grandes lignes de la théorie des modules de cycles
 et on établit le théorème central \ref{thm:main} cité précédemment en s'appuyant
 sur les articles \cite{Deg4} et \cite{Deg5} pour construire chaque foncteur
 de l'équivalence de catégories.
 On notera que tout le sel de ce théorème réside dans l'étude de la 
 fonctorialité de la résolution de Gersten. C'est ce qu'on exploite
 notamment en étudiant l'identification de \ref{iso_coh_gpe_chow}
 à la fin de cette partie.

Dans la seconde partie,
 on rappelle la théorie des complexes motiviques de Voevodsky
 en utilisant le travail effectué dans \cite{CD1} sur les catégories de modèles.
 Grâce à cela, on peut introduire la catégorie $\DM$ aisément
 et donner la définition de la t-structure homotopique sur cette catégorie,
 ainsi que l'identification du coeur.
 La fin de cette seconde partie est consacrée à la comparaison \ref{thm:comparaison_ssp}
 des suites spectrales du coniveau et de la t-structure homotopique,
 qui s'appuie de manière essentielle sur l'étude de la suite spectrale
 du coniveau déjà effectuée dans \cite{Deg6}.
On exploite ici la notion de \og filtration décalée \fg~introduite par Deligne,
 sur le modèle de \cite{Par}.
Dans un assez long préliminaire \ref{num:ssp},
 nous avons essayé de clarifier les diverses constructions des suites spectrales
 qui interviennent, l'une par un couple exact et l'autre par un complexe filtré.
C'est l'\emph{axiome de l'octaèdre} qui apparaît central ici et qui explique
 de manière frappante la dualité entre ces deux approches.

La troisième partie est consacrée à diverses applications et compléments.
Le premier de ceux-ci illustre la remarque déjà faite sur l'importance
 de l'étude de la fonctorialité de la résolution de Gersten: 
 elle nous permet de montrer
 que le morphisme de Gysin (\emph{cf.} \cite{Deg6})
  et la transposée d'un morphisme fini coïncident dans la catégorie $\DM$.
La deuxième application montre que pour une large classe de modules de cycles,
 la graduation structurale est bornée inférieurement. Cette classe est
 déterminée par une notion de constructibilité sur les modules homotopiques
 introduite à cette occasion, et comparée à d'autres définitions.
On montre ensuite en quoi la théorie des groupes de Chow à coefficients
 dans un module de cycle est particulièrement intéressante dans le cas des
 schémas singuliers, en les identifiant (dans les cas où on peut)
 avec l'homologie de Borel-Moore du module homotopique correspondant\footnote{On
 notera ainsi que dans la définition des différentielles du complexe $C^*(X,\phi)$,
 on utilise de manière centrale le procédé de normalisation pour désingulariser
 les sous-schémas fermés de $X$ en codimension $1$. 
 Cette résolution (faible) des singularités explique dans une certaine mesure 
 les bonnes propriétés des groupes $A^*(X,\phi)$.}.
On termine l'article sur une comparaison entre les motifs génériques
 et les motifs birationnels introduits par B.~Kahn et R.~Sujatha (\emph{cf.} \cite{KS})
 qui permet de compléter la fonctorialité des motifs génériques.
 
\paragraph{Perspective}
Comme on l'a déjà remarqué, la catégorie $\DM$ est analogue à la catégorie
 homotopique stable topologique. C'est dans le cadre de la catégorie homotopique
 stable des schémas que cette analogie prend tout son sens.
Ainsi, F.~Morel a introduit et largement étudié l'analogue des faisceaux
 homotopiques dans ce cadre (les faisceaux Nisnevich strictement invariants
 par homotopie, \cite{Mor3}). Il a aussi introduit l'analogue des
 modules homotopiques, interprétés comme objets du coeur de la catégorie
 homotopique stable des schémas dans \cite{Mor1}.
Sa notion de module homotopique est plus générale que la nôtre.\footnote{On 
 peut résoudre la contradiction entre les deux terminologies
 en utilisant l'expression \og module homotopique avec transferts \fg~pour 
 désigner les objets considérés ici.
 Une autre possibilité est d'appeller \og modules homotopiques généralisés \fg~les objets
 considérés par Morel, venant de la catégorie homotopique stable.}
Dans un travail en préparation,
 nous montrons une conjecture de Morel prédisant que les modules homotopiques
 introduits ici sont équivalents aux modules homotopiques orientables.
Cette conjecture s'inscrit plus généralement dans le tableau
 actuel qui veut que la catégorie $\DM$ s'identifie à la catégorie
 des spectres orientables avec loi de groupe formelle additive -- ceci
 est connu à coefficients rationnels et devrait se généraliser sur une base 
 quelconque (\emph{cf.} \cite{CD3}).\footnote{Ceci soulève aussi un problème de terminologie
 pour différencier les objets de la catégorie $\DM$ de ceux de la catégorie
 $SH(k)$. La terminologie de \og spectres motiviques \fg~a été employée 
 par plusieurs auteurs -- dont malheureusement Voevodsky -- pour désigner 
 les objets de $SH(k)$. Il nous semble plus opportun, 
 pour des raisons historiques évidentes, 
 de réserver cette terminologie aux objets de la catégorie $\DM$ et de parler de \og spectres
 motiviques généralisés \fg~pour désigner les objets de $SH(k)$. Cette terminologie
 présente l'avantage d'être conforme à celle utilisée en topologie algébrique.}

Le résultat concernant la comparaison de suites spectrales
 a été considéré indépendemment de nous par M.V.~Bondarko
 dans son travail sur les structures de poids (\emph{cf.} \cite{Bon}).
Notons pour terminer que notre démonstration s'appuie sur la 
\emph{preuve de Deligne}\footnote{Elle
est mentionnée par Bloch et Ogus dans \cite{BO} mais n'a pas été rédigée
par Deligne.} et que nous nous sommes particulièrement inspirés
de \cite{GS} et \cite{Par}.


\subsubsection*{Remerciements}

Mes remerciements vont en premier lieu à F.~Morel qui a dirigé ma thèse,
 dans laquelle le résultat central de cet article a été établit. L'influence de
 ses idées est partout dans ce texte. Je remercie aussi A.~Suslin et A.~Merkurjev
 qui ont été les rapporteurs de cette thèse et dont les rapports m'ont beaucoup aidés
 dans la rédaction présente,
  ainsi que D.C.~Cisinski pour sa relecture et son intérêt pour mon mémoire de thèse.
 Enfin, je remercie J.~Ayoub, A.~Beilinson, J.B.~Bost, B.~Kahn, J.~Riou, C.~Soulé
  et J.~Wildeshaus pour leur intérêt et des discussions autour du sujet de cet article.

\subsection*{Notations}

On fixe un corps parfait $k$. 
Tous les schémas considérés sont des $k$-schémas séparés.
Nous dirons qu'un schéma $X$ est lisse si il est lisse de type fini sur $k$.
La catégorie des schémas lisses est notée $\sm$.

Nous disons qu'un schéma $X$ est \emph{essentiellement de type fini}
s'il est localement isomorphe au spectre d'une $k$-algèbre
qui est une localisation d'une $k$-algèbre de type fini.

On appelle \emph{corps de fonctions} toute extension de corps
$E/k$ de degré de transcendance fini.
Un \emph{corps de fonctions valué} est un couple $(E,v)$ où $E$ est un corps
de fonctions et $v$ est une valuation sur $E$ dont l'anneau des entiers
est essentiellement de type fini sur $k$. \\
Un \emph{modèle} de $E/k$
est un $k$-schéma lisse connexe $X$ muni d'un $k$-isomorphisme
entre son corps des fonctions et $E$.
On définit le pro-schéma des modèles de $E$~:
$$
(E)=\pplim{A \subset E} \spec A
$$
où $A$ parcourt l'ensemble ordonné filtrant des sous-$k$-algèbres
de type fini de $E$ dont le corps des fractions est $E$.

Voici une liste des catégories principales utilisées dans ce texte:
\begin{itemize}
\item $\DMgme$ (resp. $\DMgm$) désigne la catégorie des motifs géométriques effectifs
 (resp. non nécessairement effectifs).
\item $\DMe$ désigne la catégorie des complexes motiviques
 (que l'on ne suppose pas nécessairement bornés inférieurement).
\item $\DM$ désigne la catégorie des spectres motiviques, version non effective
 de $\DMe$.
\item $\hftr$ (resp. $\hmtr$) désigne la catégorie des faisceaux
 (resp. modules) homotopiques. C'est le coeur de la t-structure homotopique
 sur $\DMe$ (resp. $\DM$).
\item $\modl$ désigne la catégorie des modules de cycles.
\end{itemize}

\part{Modules homotopiques et modules de cycles} 

\section{Modules homotopiques}

\subsection{Rappels sur les faisceaux avec transferts}

\num Soient $X$ et $Y$ des schémas lisses.
Rappelons qu'une \emph{correspondance finie} de $X$ vers $Y$ est un cycle
de $X \times Y$ dont le support est fini équidimensionel sur $X$.
La formule habituelle permet de définir un produit de composition
pour les correspondances finies qui donne lieu à une
catégorie additive $\smc$ (\emph{cf.} \cite[4.1.19]{Deg7}).
On obtient un foncteur $\gamma:\sm \rightarrow \smc$,
égal à l'identité sur les objets,
en associant à tout morphisme le cycle associé à son graphe.
La catégorie $\smc$ est enfin monoïdale symétrique.
Le produit tensoriel sur les objets est donné par le produit
cartésien des schémas lisses; sur les morphismes, il est induit
par le produit extérieur des cycles (\emph{cf.} \cite[4.1.23]{Deg7}).

\num Un \emph{faisceau avec transferts} est un foncteur 
$F:(\smc)^{op} \rightarrow \ab$ additif contravariant
tel que $F \circ \gamma$ est un faisceau Nisnevich.
On note $\ftr$ la catégorie des faisceaux avec transferts
munis des transformations naturelles.
Cette catégorie est abélienne de Grothendieck
 (\emph{cf.} \cite[4.2.8]{Deg7}).
Une famille génératrice est donnée par les faisceaux
représentables par un schéma lisse $X$~:
$$
\rep X:Y \mapsto c(Y,X).
$$
Il existe un unique  produit tensoriel symétrique $\otr$
sur $\ftr$ telle que le foncteur $\repNP$ est monoïdal
symétrique. La catégorie $\ftr$ est de plus
monoïdale symétrique fermée (\emph{cf.} \cite[4.2.14]{Deg7}).

\begin{df}
Un \emph{faisceau homotopique} est un faisceau avec
transferts $F$ invariant par homotopie~:
pour tout schéma lisse $X$, le morphisme induit par la 
projection canonique $F(X) \rightarrow F(\AA^1_X)$
est un isomorphisme.
\end{df}
On note $\hftr$ la sous-catégorie pleine de $\ftr$
formée des faisceaux homotopiques.
Le foncteur d'oubli évident $\cO:\hftr \rightarrow \ftr$
admet un adjoint à gauche $\hlrep:\ftr \rightarrow \hftr$,
$\hlrep(F)$ étant défini comme le faisceau associé au
préfaisceau
\begin{equation} \label{def:hlrep}
X \mapsto \coKer\Big( F(\AA^1_X)
 \xrightarrow{s_0^*-s_1^*} F(X) \Big)
\end{equation}
avec $s_0$ (resp. $s_1$) la section nulle (resp. unité)
de $\AA^1_X/X$ (\emph{cf.} \cite[4.4.4, 4.4.15]{Deg7}).
D'après \emph{loc. cit.}, le foncteur $\cO$ est exact.
La catégorie $\hftr$ est donc une sous-catégorie épaisse de $\ftr$.
En particulier, c'est une catégorie abélienne de Grothendieck
dont une famille génératrice est donnée par les faisceaux 
de la forme $\hlrep(X):=\hlrep(\rep X)$. On vérifie aisément
que le foncteur $\cO$ commute de plus à toutes les limites projectives
ce qui implique que $\hftr$ admet des limites projectives.

\num \label{foncteurs_fibres_fhtp}
Pour un corps de fonctions $E$,
on définit la fibre de $F$ en $E$ comme la limite inductive de l'application
de $F$ au pro-schéma $(E)$~:
$$
\hat F(E)=\ilim{A \subset E} F(\spec A)
$$
Les foncteurs $F \mapsto \hat F(E)$ forment une famille conservative
de foncteurs fibres\footnote{\emph{i.e.} exacts commutant aux 
limites inductives.} de $\hftr$ (\emph{cf.} \cite[4.4.7]{Deg7}).

\rem Ce dernier résultat repose sur la propriété
 très intéressante des faisceaux homotopiques suivante:
\begin{prop} \label{prop:h_0(imm_ouv)=epi}
Pour toute immersion ouverte dense $j:U \rightarrow X$ dans un schéma lisse,
 le morphisme induit
$$
j_*:\hlrep(U) \rightarrow \hlrep(X)
$$
est un épimorphisme dans $\hftr$.
\end{prop}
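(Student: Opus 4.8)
The plan is to avoid computing $\hlrep(U)$ and $\hlrep(X)$ directly, and instead to test the asserted epimorphism against all homotopy sheaves by adjunction. In the abelian category $\hftr$ a morphism $f$ is an epimorphism if and only if, for every object $G$, precomposition with $f$ gives an injection on $\Hom(-,G)$; applied to $j_*$ this means that $j_*$ is an epimorphism exactly when $\Hom_{\hftr}(\hlrep(X),G)\rightarrow\Hom_{\hftr}(\hlrep(U),G)$ is injective for every homotopy sheaf $G$. The adjunction $(\hlrep,\cO)$ together with the Yoneda identity $\Hom_{\ftr}(\rep{W},\cO G)=G(W)$ for a representable sheaf turns this map into the restriction $j^*\colon G(X)\rightarrow G(U)$. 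Thus the statement is equivalent to the assertion that, for every homotopy sheaf $G$ and every dense open immersion $j\colon U\rightarrow X$, the restriction $j^*\colon G(X)\rightarrow G(U)$ is injective; this reformulation is what I would prove.

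Next I would check this injectivity on stalks. A section $s\in G(X)$ with $j^*s=0$ has to be shown to vanish, and since $G$ is a Nisnevich sheaf it suffices to prove that its germ $s_x$ is zero in the stalk of $G$ at $x$, namely $G(\mathcal O_{X,x})$, for every point $x\in X$ (one may assume $X$ connected with function field $E=k(X)$, since a dense open meets every component). Because $U$ is dense, $U\cap\spec{\mathcal O_{X,x}}$ is a dense open of the smooth local scheme $\spec{\mathcal O_{X,x}}$, so $s_x$ already restricts to $0$ there and a fortiori maps to $0$ in the generic fibre $\hat G(E)=\varinjlim_{A\subset E}G(\spec A)$. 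I would then invoke Voevodsky's injectivity theorem for homotopy invariant sheaves with transfers over the perfect field $k$, which states that $G(\mathcal O_{X,x})\rightarrow\hat G(E)$ is injective. Hence $s_x=0$ for every $x$, so $s=0$ and $j^*$ is injective.

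The genuine difficulty is concentrated in this last input, the injectivity theorem, which is the only non-formal ingredient; everything else is adjunction and sheaf-theoretic bookkeeping. Its proof uses the presence of transfers in an essential way (homotopy invariance alone does not suffice) and proceeds through Voevodsky's technique of standard triples and relative curves; crucially it is established independently of the fibre formalism, so using it here creates no circularity with the conservativity statement \ref{foncteurs_fibres_fhtp} that this Proposition is meant to support. A more hands-on alternative would be to compute the Nisnevich stalks of $\hlrep(X)$ and $\hlrep(U)$ and prove surjectivity directly, which amounts to a moving lemma expressing that every finite correspondence from a smooth henselian local scheme into $X$ is $\AA^1$-homotopic to one supported over $U$; I expect this route to be more painful, since preserving finiteness and equidimensionality throughout the homotopy is precisely where the real work lies, and it is cleaner to isolate the geometric content inside the injectivity theorem.
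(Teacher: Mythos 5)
Your proposal is correct, but it takes a genuinely different route from the paper. The paper's proof is a one-line appeal to a geometric moving lemma (corollaire 4.3.22 de \cite{Deg7}): there exist an open cover $\pi:W\rightarrow X$ and a finite correspondence $\alpha:W\rightarrow U$ such that $j\circ\alpha$ est homotope � $\pi$; since $\hlrep$ identifies $\AA^1$-homotopic correspondences (by its very definition \eqref{def:hlrep}) and is a left adjoint, one gets $j_*\circ\alpha_*=\pi_*$ with $\pi_*$ an epimorphism, hence $j_*$ is one. You instead dualize the statement: by the adjunction $(\hlrep,\cO)$ and Yoneda, the claim is equivalent to the injectivity of $j^*:G(X)\rightarrow G(U)$ for every homotopy sheaf $G$, which you then deduce, via a stalk argument, from Voevodsky's injectivity theorem for smooth (semi)local schemes. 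Both arguments are valid, and your remark on non-circularity is sound: the injectivity theorem is proved by Voevodsky through standard triples and the geometric presentation lemma, independently of the fibre-functor formalism of \ref{foncteurs_fibres_fhtp}. The trade-off is this: the paper's route is constructive --- it exhibits an explicit section of $j_*$ up to homotopy, locally on $X$, keeping the geometric content visible --- and the paper then \emph{deduces} the monomorphism $F(X)\rightarrow\hat F(E)$ from the proposition, i.e.\ it runs your adjunction equivalence in the opposite direction; your route packages exactly the same geometry (the presentation lemma is the engine of the injectivity theorem) inside a black box, which is a legitimate shortcut but proves an a priori stronger-looking statement first. One minor imprecision: the Nisnevich stalks of $G$ are the henselizations $G(\mathcal O_{X,x}^h)$ rather than $G(\mathcal O_{X,x})$; since $G$ is in particular a Zariski sheaf, testing the vanishing of a section on Zariski stalks suffices, so your argument stands as written, but the wording should say so.
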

\noindent Cette proposition est une conséquence du corollaire 4.3.22 de \cite{Deg7}:
il existe un recouvrement ouvert $W \xrightarrow \pi X$ et une correspondance finie
$\alpha:W \rightarrow U$ telle que le diagramme suivant est commutatif à homotopie près
$$
\xymatrix@C=20pt@R=8pt{
& W\ar_{\alpha}[ld]\ar^{\pi}[d] \\
U\ar|j[r] & X.
}
$$
Elle implique en particulier que pour tout schéma lisse connexe
 $X$ de corps des fontions $E$, le morphisme canonique $F(X) \rightarrow \hat F(E)$
 est un monomorphisme.

\num \label{num:localisation}
Dans une catégorie abélienne de Grothendieck $\A$,
une classe de flèches $\W$ est dite localisante si~:
\begin{enumerate}
\item[(i)] $\W$ est stable par limite inductive.
\item[(ii)] Soit $f$ et $g$ des flèches composables de $\A$.
Si deux des constituants de $(f,g,gf)$ appartiennent à $\W$, 
le troisième appartient à $\W$.
\end{enumerate}
Si $\cS$ est un classe de flèche essentiellement petite,
on peut parler de la classe de flèches localisante engendrée
par $\cS$.
\begin{lm}
Il existe un unique produit tensoriel symétrique $\ohtr$ sur
$\hftr$ tel que le fonteur $\hlrep$ est monoïdal symétrique.
\end{lm}
\begin{proof}
D'après ce qui précède, $\hftr$ s'identifie à la localisation de 
la catégorie $\ftr$ par rapport à la classe de flèches localisante
engendrée par les morphismes $\rep{\AA^1_X} \rightarrow \rep X$
 pour un schéma lisse $X$ arbitraire.
Ainsi, pour tout schéma lisse $X$, $\W \otr \rep X \subset \W$.
Donc le produit tensoriel $\otr$ satisfait la propriété de localisation 
par rapport à $\W$ ce qui démontre le lemme.
\end{proof}
La catégorie $\hftr$ munie du produit tensoriel $\ohtr$ obtenu 
dans le lemme précédent est monoïdale symétrique fermée.
Ce produit tensoriel est caractérisé par la relation
$\hlrep(X) \ohtr \hlrep(Y)=\hlrep(X \times Y)$ déduite du lemme précédent.

\begin{df}
Soit $s:\{1\} \rightarrow \GG$ l'immersion du point unité.
On appelle \emph{sphère de Tate} le conoyau de $\hlrep(s)$ dans la
catégorie $\hftr$. On la note $\tate$.
\end{df}
D'après l'invariance par homotopie, on obtient encore une suite
exacte courte scindée dans $\hftr$~:
$$
0 \rightarrow \tate \rightarrow \hlrep(\GG)
 \xrightarrow{j_*} \hlrep(\AA^1_k)
 \rightarrow 0.
$$
où $j$ est l'immersion ouverte évidente. 
D'après la proposition 2.2.4 de \cite{Deg5},
la sphère de Tate est isomorphe en tant que faisceau au groupe
multiplicatif $\GG$ -- les transferts induits sur $\GG$ se calculent
à l'aide de l'application norme suivant \emph{loc. cit.}

\begin{lm} \label{lm:permutation_tate}
L'automorphisme de permutation des facteurs
 sur $\tate \ohtr \tate$ est égal à $-1$.
\end{lm}
\begin{proof}
Ce fait est bien connu du point de vue de la cohomologie
motivique. On donne ici une démonstration élémentaire basée
sur un argument de Suslin et Voevodsky.
Pour tout schéma affine lisse $X=\spec A$, 
 on obtient par définition des épimorphismes~:
$$
\GG(X) \otimes_\ZZ \GG(X)
 \xrightarrow \alpha \Gamma(X,\hlrep(\GG^2))
  \xrightarrow \beta \Gamma(X,\tate \ohtr \tate).
$$
On considère le sous-schéma fermé $H$ de
 $\AA^1 \times X \times \GG=\spec{A[t,u,u^{-1}]}$
définit par l'équation
$$
t(u-a)(u-b)+(1-t)(u-ab)(u-1)=0.
$$
Il est de codimension $1$ et domine $\AA^1_X$. C'est donc
une correspondance finie de $\AA^1_X$ dans $\GG$. 
D'après l'équation ci-dessus, $H \circ s_0=\{ab\}+\{1\}$
et $H \circ s_1=\{a\}+\{b\}$. Si $\delta$ désigne
l'immersion diagonale de $\GG$, l'homotopie $\delta \circ H$
montre donc la relation
$$
\alpha(ab,ab)+\alpha(1,1)=
\alpha(a,a)+\alpha(b,b).
$$
On en déduit $\beta \alpha(b,a)=-\beta \alpha(a,b)$
ce qui conclut d'après le lemme de Yoneda.
\end{proof}

\num \label{num:notation_graduation} 
Pour un entier $n \geq 0$,
 on note $\tatex n$ la puissance tensorielle
$n$-ième de $\tate$ dans $\hftr$.
Si $F$ est un faisceau homotopique,
 on pose $F_{-n}=\uHom_{\hftr}(\tatex n,F)$.
Par définition, pour tout schéma lisse $X$,
$$
F_{-1}(X)=F(\GG \times X)/F(X).
$$
Le foncteur $?_{-n}$ est le $n$-ième itéré
 du fonteur  $?_{-1}$.
Ainsi la proposition 3.4.3 de \cite{Deg5} entraine~:
\begin{lm} \label{lm:-1_exact}
L'endofoncteur
 $\hftr \rightarrow \hftr, F \mapsto F_{-n}$
  est exact.
\end{lm}

Le résultat suivant est un corollaire du théorème
de simplification de Voevodsky \cite{V3}.
\begin{prop} \label{cancellation}
L'endofoncteur $\hftr \rightarrow \hftr, F \mapsto \tatex n \ohtr F$ est 
pleinement fidèle.
\end{prop}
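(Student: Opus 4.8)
Le plan est de se ramener au cas $n=1$ par un argument purement formel, puis d'identifier l'�nonc� restant au th�or�me de simplification. D'apr�s \ref{num:notation_graduation}, $\tatex n$ est la puissance tensorielle $n$-i�me de $\tate$ pour $\ohtr$; par associativit� du produit tensoriel, l'endofoncteur $F \mapsto \tatex n \ohtr F$ est donc le compos� $n$-fois it�r� de $F \mapsto \tate \ohtr F$. Comme un compos� de foncteurs pleinement fid�les est pleinement fid�le, il suffira de traiter le cas $n=1$.

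J'exploiterais ensuite la structure mono�dale sym�trique ferm�e de $\hftr$ rappel�e ci-dessus. Le foncteur interne $\uHom_{\hftr}(\tate,-)$, qui n'est autre que la contraction $F \mapsto F_{-1}$ de \ref{num:notation_graduation}, est adjoint � droite de $F \mapsto \tate \ohtr F$. Or un foncteur admettant un adjoint � droite est pleinement fid�le si et seulement si l'unit� de l'adjonction est un isomorphisme. Il reste donc � �tablir que le morphisme canonique
$$
\eta_F \colon F \longrightarrow (\tate \ohtr F)_{-1}
$$
est un isomorphisme pour tout faisceau homotopique $F$.

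Ce morphisme $\eta_F$ est pr�cis�ment le morphisme de simplification. En utilisant que $\tate$ s'identifie au faisceau $\GG$ (proposition 2.2.4 de \cite{Deg5}) et que $(?)_{-1}=\uHom_{\hftr}(\tate,-)$, l'inversibilit� de $\eta_F$ est exactement l'assertion du th�or�me de simplification de Voevodsky \cite{V3}; la conclusion en d�coulera. Le point d�licat sera cette derni�re identification: il faut s'assurer que le produit tensoriel $\tate \ohtr F$, calcul� dans le c\oe ur $\hftr$ et caract�ris� par la relation $\hlrep(X) \ohtr \hlrep(Y)=\hlrep(X \times Y)$, co�ncide bien avec l'objet auquel s'applique \cite{V3}, et que $\eta_F$ est le morphisme canonique consid�r� par Voevodsky. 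Autrement dit, tout le contenu g�om�trique est concentr� dans \cite{V3}, la pr�sente proposition n'en �tant qu'une reformulation via l'adjonction $\tate \ohtr - \dashv (?)_{-1}$ et la r�duction multiplicative au cas $n=1$.
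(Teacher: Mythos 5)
Your skeleton coincides with the paper's proof: reduce to $n=1$, observe that $\tate \ohtr -$ has the right adjoint $(?)_{-1}=\uHom_{\hftr}(\tate,-)$, so that full faithfulness amounts to the unit $\eta_F\colon F \rightarrow (\tate \ohtr F)_{-1}$ being invertible, then appeal to Voevodsky's cancellation theorem. The gap is in your last step, and it is not the mere ``v�rification'' your wording suggests. The theorem of \cite{V3} is a statement in the triangulated category $\Vdmme$, where the Tate twist is the \emph{derived} tensor product: what it gives is an isomorphism $F \rightarrow \uHom_{\Vdmme}(\repNP(1)[1],F(1)[1])$ in $\Vdmme$. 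Your unit map, on the other hand, involves the abelian-category tensor $\tate \ohtr F$, which is only the truncation $\uH^0(F(1)[1])$ of the derived twist: a priori $F(1)[1]$ may have nonzero homotopy sheaves in negative degrees, and the truncation map $F(1)[1] \rightarrow \tate \ohtr F$ does not formally induce an isomorphism after applying $\uHom_{\Vdmme}(\repNP(1)[1],-)$, because for a t-structure there is no vanishing of $\Hom(A,B)$ when $A$ and $B$ are both concentrated in degrees $\leq 0$. So the invertibility of $\eta_F$ is \emph{not} ``exactly the assertion'' of \cite{V3}; it is that assertion \emph{plus} a nontrivial identification.

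The missing ingredient is precisely the computation
$$
\uHom_{\Vdmme}\big(\repNP(1)[1],F(1)[1]\big)\;=\;\big(\uH^0(F(1)[1])\big)_{-1},
$$
i.e.\ the fact that the internal Hom from the Tate object is computed degree by degree on homotopy sheaves. This is what the paper cites as \cite[3.4.4]{Deg5} (it resurfaces in this article as the $t$-exactness of $\Omega$, corollaire \ref{-1_sur_complexes}), and it rests on the exactness of the contraction functor (lemme \ref{lm:-1_exact}) and on Gersten-type results for homotopy sheaves --- inputs that are independent of \cite{V3}. So the geometric content is not all concentrated in \cite{V3}: the ``point d�licat'' you flag at the end is itself a theorem, and your proof is incomplete until you name it and use it. Once that identification is in place, your argument (reduction to $n=1$, adjunction criterion, cancellation) is exactly the paper's.
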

\begin{proof}
Il suffit de considérer le cas $n=1$.
La preuve anticipe la suite de l'exposé puisqu'elle utilise la catégorie
$\Vdmme$ des complexes motiviques de Voevodsky définie dans \cite{V3}.
Le théorème central de \emph{loc. cit.} affirme que le twist de Tate 
 est pleinement fidèle dans $\Vdmme$.
Il en résulte que le morphisme canonique 
$F \rightarrow \uHom_{\Vdmme}(\repNP(1)[1],F(1)[1])$
est un isomorphisme. D'après \cite[3.4.4]{Deg5},
le membre de droite est égal à $\uH^0(F(1)[1])_{-1}$.
Or par définition, $\uH^0(F(1)[1])=\tate \ohtr F$ et la transformation
naturelle correspondante $F \rightarrow (\tate \ohtr F)_{-1}$
est l'application d'adjonction.
\end{proof}

\subsection{Définition}

\num \label{num:cat_gr}
On note $\hftrgr$ la catégorie des faisceaux homotopiques $\ZZ$-gradués.
Pour un tel faisceau $F_*$ et un entier $n \in \ZZ$,
on note $F_*\{n\}$ le faisceau gradué dont la composante en degré $i$ est 
$F_{i+n}$. Si $F$ est un faisceau homotopique, on note encore $F\{n\}$
le faisceau gradué concentré en degré $-n$ égal à $F$.
La catégorie $\hftrgr$ est abélienne de Grothendieck
avec pour générateurs la famille $(\hlrep(X)\{i\})$ indexée par les schémas 
lisses $X$ et les entiers $i \in \ZZ$.

Cette catégorie est monoïdale symétrique~:
$$
\left(F_* \ohtrgr G_*\right)_n=\oplus_{p+q=n} F_p \ohtr G_q.
$$
Pour la symétrie, on adopte la convention donnée par la règle de Koszul~:
$$
\oplus_{p+q=n} F_p \ohtr G_q
 \xrightarrow{\sum (-1)^{pq}.\epsilon_{pq}} \oplus_{p+q=n} G_q \ohtr F_p
$$
où $\epsilon_{pq}$ désigne l'isomorphisme de symétrie pour la structure monoïdale
des faisceaux homotopiques.

On note $\tatex *$ le faisceau homotopique gradué concentré en degré positif,
égal en degré $n$ à $\tatex n$. Compte tenu de la règle de Koszul ci-dessus
et du lemme \ref{lm:permutation_tate}, c'est un monoïde commutatif
dans $\hftrgr$. On note $\tatemod$ la catégorie des modules sur $\tatex *$.
C'est une catégorie abélienne monoïdale de Grothendieck avec pour 
générateurs $(\tatex * \ohtr \hlrep(X)\{i\})$ pour $X$ un schéma lisse et 
$i \in \ZZ$.
Le morphisme structural d'un $\tatex *$-module $(F_*,\tau)$ est déterminé par 
 la suite de morphismes $\tate \ohtr F_n \xrightarrow{\tau_n} F_{n+1}$,
 appelés {\it morphismes de suspension}.
\begin{df} \label{df:mod_htp}
Un \emph{module homotopique} est un $\tatex *$-module $(F_*,\tau)$
tel que le morphisme adjoint à $\tau_n$
$$
\epsilon_n:F_n \rightarrow \uHom_{\hftr}(\tate,F_{n+1})=(F_{n+1})_{-1}
$$
est un isomorphisme. On note $\hmtr$ la sous-catégorie de $\tatemod$
formée des modules homotopiques.
\end{df}

\num \label{construction_base_hmtr}
Compte tenu du lemme \ref{lm:-1_exact}, le foncteur d'inclusion
$\hmtr \rightarrow \tatemod$ est exact et conservatif. 
Il admet de plus un adjoint
à gauche $L$ définit pour tout faisceau homotopique $F$ par la formule
$$
L\big(\tatex * \ohtr F\{i\}\big)_n=\begin{cases}
\tatex{n+i} \ohtr F & \text{si } n+i \geq 0 \\
F_{n+i} & \text{si } n+i \leq 0
\end{cases}
$$
en adoptant la notation de \ref{num:notation_graduation}.
Le fait que $L$ prend ses valeurs dans les faisceaux homotopiques
résulte de \ref{cancellation}. On pose plus simplement
$\hStab F\{i\}=L\big(\tatex * \ohtr F\{i\}\big)$.
La catégorie $\hmtr$ est donc une sous-catégorie abélienne
de $\tatemod$, avec pour générateurs la famille
\begin{equation} \label{notation:h_0*}
h_{0,*}=\hStab \hlrep(X)\{i\}
\end{equation}
pour un schéma lisse $X$ et un entier $i \in \ZZ$ --
 le symbole $*$ correspond à la graduation naturelle
 de module homotopique.

Si $(F_*,\tau)$ est un module homotopique, on pose 
$\hLop F_*=F_0$. On obtient ainsi un couple de foncteurs adjoints
$$
\hStab:\hftr \leftrightarrows \hmtr:\hLop
$$
tels que $\hStab$ est pleinement fidèle (prop. \ref{cancellation})
et $\hLop$ est exact (lemme \ref{lm:-1_exact}). 
Ainsi, pour tout schéma lisse $X$, tout module homotopique $F_*$
et tout $(n,i) \in \ZZ^2$,
\begin{equation} \label{calcul_Hom_hmtr}
\Hom_{\hmtr}(h_{0,*}(X),F_*\{i\}[n])=H^n_\nis(X;F_i).
\end{equation}
\begin{lm}
Il existe sur $\hmtr$ une unique structure monoïdale symétrique 
telle que le foncteur $L$ est monoïdal symétrique.
\end{lm}
\begin{proof}
Compte tenu de ce qui précède,
le foncteur $L$ est un foncteur de localisation:
pour tout schéma lisse $X$ et tout entier $n \in \ZZ$,
on obtient par définition
$(\tatex * \ohtr \hlrep(X)\{n\})_{-n+1}=\tate \ohtr \hlrep(X)$.
Par adjonction, l'identité de $\tate \ohtr \hlrep(X)$ induit
donc un morphisme de $\tatex *$-modules
$$
\tatex * \ohtr (\tate \ohtr \hlrep(X)\{n-1\})
 \rightarrow \tatex * \ohtr \hlrep(X)\{n\}.
$$
Utilisant à nouveau le jeu des adjonctions introduites ci-dessus,
$\hmtr$ est la localisation de $\tatemod$ par rapport à la 
classe de flèches localisante $\W$ (\emph{cf.} \ref{num:localisation}) 
engendrée par les morphismes précédents.
Pour tout couple $(Y,m)$, $Y$ schéma lisse, $m \in \ZZ$,
il est évident que $\W \ohtrgr (\tatex * \ohtr \hlrep(Y)\{m\}) \subset \W$.
Ainsi, $\ohtrgr$ vérifie la propriété de localisation par rapport à $\W$
ce qui conclut.
\end{proof}
La catégorie $\hmtr$ est donc monoïdale symétrique fermée
avec pour neutre le module homotopique $\tatex *$.
Le foncteur $\hStab$ est de plus monoïdal symétrique.
Enfin, l'objet $\hStab \tate$ est inversible pour le produit tensoriel
avec pour inverse $\hStab \ZZ^{tr}\{-1\}$.

\rem La catégorie $\hmtr$ est la catégorie monoïdale abélienne 
de Grothendieck {\it universelle} pour les propriétés qui viennent d'être 
énoncées.
La construction donnée ici est parfaitement analogue à la construction
de la catégorie des spectres en topologie algébrique, comme le suggère
nos notations -- en particulier pour le faisceau $\tate$ qui joue le
rôle de la sphère topologique.
La construction ici est facilitée parce que nous sommes
dans un cadre abélien et que la sphère $\tate$ est anti-commutative.
Le théorème de simplification \ref{cancellation} rend la construction
du foncteur $L$ plus facile mais n'est pas indispensable.

\subsection{Réalisation des motifs géométriques}
\label{realisation_mhtp}

Rappelons que la catégorie des motifs géométriques effectifs $\DMgme$ définie
 par Voevodsky
est l'enveloppe pseudo-abélienne de la localisation de la catégorie $K^b(\smc)$ 
des complexes de $\smc$ à équivalence d'homotopie près par la sous-catégorie 
triangulée épaisse engendrée par les complexes suivants~:
\begin{enumerate}
\item 
 $\hdots 0 \rightarrow U \cap V \rightarrow U \oplus V \rightarrow X 
  \rightarrow 0 \hdots$ \\
 pour un recouvrement ouvert $U \cup V$ d'un schéma lisse $X$.
\item $\hdots 0 \rightarrow \AA^1_X \rightarrow X \rightarrow 0 \hdots$ \\
 induit par la projection canonique pour un schéma lisse $X$.
\end{enumerate}
Rappelons que cette catégorie est triangulée monoïdale symétrique. 
Pour un schéma lisse $X$, on note simplement $M(X)$ le complexe concentré
en degré $0$ égal à $X$ vu dans $\DMgme$. \\
Pour tout complexe borné $C$ de $\smc$, on note $\rep C$ le complexe
de faisceau avec transferts évident. Pour un faisceau homotopique $F$,
posons $\varphi_F(C)=\Hom_{D(\ftr)}(\rep C,F)$.
Rappelons que pour un schéma lisse $X$,
 $\Hom_{D(\ftr)}(\rep X[-n],F)=H^n_\nis(X;F)$ (\emph{cf.} \cite[4.2.9]{Deg7}); 
la cohomologie Nisnevich de $F$ est de plus invariante par homotopie (\emph{cf.} \cite[4.5.1]{Deg7}).
On en déduit que le foncteur $\varphi_F$ ainsi défini se factorise et induit un foncteur
cohomologique encore noté $\varphi_F:\DMgme^{op} \rightarrow \ab$.

On définit le motif de Tate \emph{suspendu}\footnote{En effet, $\ZZ\{1\}=\ZZ(1)[1]$.}
 $\ZZ\{1\}$ comme le complexe
$$
\hdots \rightarrow\spec k \rightarrow \GG \rightarrow 0 \hdots
$$
où $\GG$ est placé en degré $0$, vu dans $\DMgme$. 
Avec une convention légèrement différente de celle de Voevodsky, adpatée à nos besoins,
on définit la catégorie des motifs géométriques $\DMgm$ comme la catégorie monoïdale
symétrique universelle obtenue en inversant $\ZZ\{1\}$ pour le produit tensoriel.
Un objet de $\DMgm$ est un couple $(C,n)$ où $C$ est un complexe de $\smc$
et $n$ un entier, noté suggestivement $C\{n\}$. 
Les morphismes sont définis par la formule
$$
\Hom_{\DMgm}(C\{n\},D\{m\})
 =\ilim{r \geq -n,-m} \Hom{\DMgme}(C\{r+n\},D\{r+n\}).
$$
Cette catégorie est de manière évidente équivalente à la catégorie définie 
dans \cite{V1} obtenue en inversant le motif de Tate $\ZZ(1)=\ZZ\{1\}[-1]$. 
Elle est donc triangulée monoïdale symétrique.

Considérons maintenant un faisceau homotopique $(F_*,\epsilon_*)$.
Pour tout motif géométrique $C\{n\}$, on pose 
$$
\varphi(C\{n\})=\ilim{r\geq -n} \Hom_{D(\ftr)}(\rep C\{r+n\},F_r)
$$
où les morphismes de transitions sont
\begin{align*}
\Hom_{D(\ftr)}(\rep C\{r+n\},F_r)
 &\xrightarrow{\epsilon_{r*}} \Hom_{D(\ftr)}(\rep C\{r+n\},(F_{r+1})_{-1}) \\
 &=\Hom_{D(\ftr)}(\rep C\{r+n+1\},F_{r+1}).
\end{align*}
Comme dans le cas des motifs effectifs, ceci induit un foncteur
de \emph{réalisation cohomologique} associé à $(F_*,\epsilon_*)$~:
$$\varphi:\DMgm^{op} \rightarrow \ab$$
Notons que ce foncteur est naturellement gradué
$\varphi_n(\rep C\{r\})=\varphi(\rep C\{r-n\})$
de sorte que,
 d'après le théorème de simplification \ref{cancellation},
 pour tout schéma lisse $X$, $\varphi_n(\rep X)=F_n(X)$

\rem Ce foncteur obtiendra une interprétation plus naturelle 
dans la partie \ref{part:motifs}. 
Notons qu'il résulte du théorème de simplification 
\ref{cancellation} que $\varphi(M(X)\{n\})=F_{-n}(X)$.

\section{Modules de cycles}

\subsection{Rappels}

Rappelons brièvement la théorie des modules de cycles sur $k$ due
à M.~Rost.
Tous ces rappels concernent la théorie telle qu'elle est exposée
dans \cite{Ros}. Toutefois, nous nous référons à \cite{Deg4}
pour des rappels plus détaillés car nous aurons à utiliser
les résultats supplémentaires qui y sont démontrés. \\
Un \emph{pré-module} de cycles $\phi$ (\emph{cf.} \cite[1.1]{Deg4})
 est la donnée pour tout corps de fonctions $E$  
 d'un groupe abélien $\ZZ$-gradué $\phi(E)$
  satisfaisant à la fonctorialité suivante~:
\begin{enumerate}
\item[\textbf{(D1)}] Pour toute extension de corps $f:E \rightarrow L$,
on se donne un morphisme appelé \emph{restriction}
$f_*:\phi(E) \rightarrow \phi(L)$ de degré $0$.
\item[\textbf{(D2)}] Pour toute extension finie de corps $f:E \rightarrow L$,
on se donne un morphisme appelé \emph{norme}
$f^*:\phi(L) \rightarrow \phi(E)$ de degré $0$.
\item[\textbf{(D3)}] Pour tout élément $\sigma \in K_r^M(E)$ du $r$-ième groupe
de $K$-théorie de Milnor de $E$, on se donne un morphisme
$\gamma_\sigma:\phi(E) \rightarrow \phi(E)$ de degré $r$.
\item[\textbf{(D4)}] Pour tout corps de fonctions valué $(E,v)$,
on se donne un morphisme appelé \emph{résidu}
$\partial_v:\phi(E) \rightarrow \phi(\kappa(v)))$ de degré $-1$.
\end{enumerate}
Considérant ces données, on introduit fréquemment un cinquième type
de morphisme, associé à un corps de fonctions valué $(E,v)$ et à une
uniformisante $\pi$ de $v$, de degré $0$,
$s_v^\pi=\partial_v \circ \gamma_\pi$, appelé \emph{spécialisation}. \\
Ces données sont soumises à un ensemble de relations 
 (\emph{cf.} \cite[par 1.1]{Deg4}). On peut se faire une idée de ces relations
en considérant le foncteur de K-théorie de Milnor qui est l'exemple
le plus simple de pré-module de cycles.

Considérons un schéma $X$ essentiellement de type fini sur $k$.
Soit $x,y$ deux points de $X$. Soit $Z$ l'adhérence réduite de $x$
dans $X$, $\tilde Z$ sa normalisation et $f:\tilde Z \rightarrow Z$
le morphisme canonique.
Supposons que $y$ est un point de codimension $1$ dans $Z$
et notons $\tilde Z_y^{(0)}$ l'ensemble des points génériques
de $f^{-1}(y)$.
Tout point $z \in \tilde Z_y^{(0)}$
correspond alors à une valuation $v_z$ sur $\kappa(x)$ de corps
résiduel $\kappa(z)$. 
On note encore $\varphi_z:\kappa(y) \rightarrow \kappa(z)$
 le morphisme induit par $f$.
On définit un morphisme $\partial_y^x:\phi(\kappa(x)) \rightarrow \phi(\kappa(y))$
par la formule suivante~:
$$
\partial_y^x=
\begin{cases}
\sum_{z \in \tilde Z_y^{(0)}} \varphi_z^* \circ \partial_{v_z}
 & \text{si } y \in Z^{(1)}, \\
0 & \text{sinon.}
\end{cases}
$$
Considérons ensuite le groupe abélien~:
$$
C^p(X;\phi)=\bigoplus_{x \in X^{(p)}} \phi(\kappa(x)).
$$
On dit que le pré-module de cycles $\phi$ est un \emph{module de cycles}
(\emph{cf.} \cite[1.3]{Deg4})
si pour tout schéma essentiellement de type fini $X$,
\begin{enumerate}
\item[\textbf{(FD)}] Le morphisme
$$
d^p_{X,\phi}:\sum_{x \in X^{(p)}, y \in X^{(p+1)}}
 \partial_y^x:C^p(X;\phi) \rightarrow C^{p+1}(X;\phi)
$$
est bien définit.
\item[\textbf{(C)}] La suite
$$
\hdots \rightarrow C^p(X;\phi) \xrightarrow{d_{X,\phi}^p} C^{p+1}(X;\phi)
  \rightarrow \hdots
$$
est un complexe.
\end{enumerate}
Les modules de cycles forment de manière évidente une catégorie
 que l'on note $\modl$.

On introduit une graduation sur le complexe de la propriété (C)~:
$$
C^p(X;\phi)_n=\bigoplus_{x \in X^{(p)}} M_{n-p}(\kappa(x)).
$$
On note $A^p(X;\phi)_n$ le $p$-ième groupe de cohomologie de ce complexe,
 appelé parfois \emph{groupe de Chow à coefficients dans $\phi$}.

Pour un schéma lisse $X$ de corps des fonctions $E$,
 le groupe $A^0(X;\phi)_n$ est donc le noyau de l'application bien
 définie
$$
\phi_n(E) \xrightarrow{\sum_{x \in X^{(1)}} \partial_x} \phi_{n-1}(\kappa(x))
$$
où $\partial_x$ désigne le morphisme résidu associé à la valuation sur
$E$ correspondant au point $x$.

\subsection{Fonctorialité}

\num \label{Gersten&plat_propre}
Le complexe gradué $C^*(X;\phi)_*$ est contravariant en $X$ par rapport
aux morphismes plats (\emph{cf.} \cite[par. 1.3]{Deg4}). 
Il est covariant par rapport aux morphismes propres équidimensionnels
(\emph{loc. cit.}).

\num Dans \cite[3.18]{Deg4}, nous avons prolongé le travail original de Rost
et nous avons associé à tout morphisme $f:Y \rightarrow X$
localement d'intersection complète (\cite[3.12]{Deg4})
tel que $Y$ est lissifiable (\cite[3.13]{Deg4})
un \emph{morphisme de Gysin}
$$
f^*:C^*(X;\phi) \rightarrow C^*(Y;\phi)
$$
qui est un composé de morphisme de complexe et d'inverse formel
d'un morphisme de complexes qui est un quasi-isomorphisme (plus précisément,
il s'agit de l'inverse formel d'un morphisme $p^*$ pour $p$ la projection
d'un fibré vectoriel). Pour désigner une telle flèche formelle,
on utilise la notation $f^*:X \doto Y$.

Ce morphisme de Gysin $f^*$ satisfait les propriétés suivantes~:
\begin{enumerate}
\item Lorsque $f$ est de plus plat, $f^*$ coincide avec le pullback plat
évoqué plus haut.
\item Si $g:Z \rightarrow Y$ est un morphisme localement d'intersection complète
avec $Z$ lissifiable, $(fg)^*=g^* f^*$.
\end{enumerate}

Dans le cas où $f$ est une immersion fermée régulière, l'hypothèse que $Y$
est lissifiable est inutile ;
le morphisme $f^*$ est défini en utilisant
la déformation au cône normal,
 suivant l'idée originale de Rost (\emph{cf.} \cite[3.3]{Deg4}).
On utilisera par ailleurs le résultat suivant dû à Rost (\cite[(12.4)]{Ros})
qui décrit partiellement ce morphisme de Gysin~:
\begin{prop} \label{Gysin&specialisation}
Soit $X$ un schéma intègre de corps des fonctions $E$,
 et $i:Z \rightarrow X$ l'immersion fermée d'un diviseur
  principal régulier irréductible paramétré par $\pi \in \mathcal O_X(X)$.
Soit $v$ la valuation de $E$ correspondant au diviseur $Z$.
Alors, le morphisme
$i^*:A^0(X;\phi) \rightarrow A^0(Z;\phi)$
est la restriction de
$s_v^\pi:\phi(E) \rightarrow \phi(\kappa(v))$.
\end{prop}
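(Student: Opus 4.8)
Le plan est de revenir à la construction du morphisme de Gysin $i^*$ par déformation au cône normal et de la restreindre au niveau des groupes $A^0$, c'est-à-dire au point générique. Comme $Z=V(\pi)$ est un diviseur régulier, son fibré normal est trivialisé par $\pi$ et la déformation au cône normal $D=D(X,Z)$ est plate sur $\AA^1=\spec{k[t]}$, de fibre générique $X$ et de fibre spéciale $D_0=N_ZX\simeq Z\times\AA^1$. Dans la carte adéquate de l'éclatement, $D$ admet pour corps des fonctions $E(s)$ avec $s=\pi/t$, donc $\pi=st$. Je noterais $w$ la valuation de $E(s)$ associée au diviseur $D_0=\{t=0\}$~: c'est l'unique prolongement de $v$ à $E(s)$ avec $w(s)=0$ et $\bar s$ transcendant sur $\kappa(v)$ (valuation de Gauss), pour lequel $w(t)=w(\pi)=1$ et $\kappa(w)=\kappa(v)(\bar s)$ est le corps des fonctions de $N_ZX$. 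La projection $p:N_ZX\to Z$ induit sur $A^0$ la restriction $p^*:\phi(\kappa(v))\to\phi(\kappa(v)(\bar s))$, qui est un isomorphisme par invariance par homotopie. Le morphisme de Gysin au niveau de $A^0$ s'écrira ainsi
\[
i^*=(p^*)^{-1}\circ s_w^t\circ\iota_*,
\]
où $\iota_*:\phi(E)\to\phi(E(s))$ est la restriction et $s_w^t=\partial_w\circ\gamma_t$ la spécialisation à la fibre spéciale relativement au paramètre de déformation $t$.

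Ensuite, je prendrais $\rho\in A^0(X;\phi)\subset\phi(E)$. Par définition, $\rho$ est non ramifié en codimension $1$, donc en particulier $\partial_v\rho=0$. Puisque $w$ prolonge $v$ sans ramification et de manière résiduellement transcendante pure, la compatibilité du résidu à ce type d'extension donne $\partial_w(\iota_*\rho)=p^*(\partial_v\rho)=0$. Je pourrais alors changer d'uniformisante sans modifier la spécialisation~: comme $\{t\}-\{\pi\}=-\{s\}$ dans $K_1^M(E(s))$ et que $s$ est une $w$-unité, la règle de Rost sur le résidu d'un produit par une unité fournit
\[
s_w^t(\iota_*\rho)-s_w^\pi(\iota_*\rho)=-\partial_w\big(\gamma_s\cdot\iota_*\rho\big)=\pm\,\gamma_{\{\bar s\}}\,\partial_w(\iota_*\rho)=0.
\]

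Il resterait à évaluer $s_w^\pi(\iota_*\rho)$. Le morphisme $\gamma$ commutant à la restriction et $\pi$ étant une uniformisante commune à $v$ et à $w$, la compatibilité de la spécialisation $s^\pi=\partial\circ\gamma_\pi$ à l'extension de Gauss $(E,v)\to(E(s),w)$ donnerait $s_w^\pi(\iota_*\rho)=p^*(s_v^\pi\rho)$. En composant avec $(p^*)^{-1}$, on obtiendrait $i^*(\rho)=(p^*)^{-1}p^*(s_v^\pi\rho)=s_v^\pi(\rho)$, soit l'énoncé.

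Le point principal à justifier — et, selon moi, le véritable obstacle — est l'identification $i^*=(p^*)^{-1}\circ s_w^t\circ\iota_*$ au niveau de $A^0$, \emph{i.e.} la réduction de la construction géométrique (déformation plate, puis inverse formel de $p^*$ par invariance par homotopie) au résidu valuatif au point générique de la fibre spéciale, ainsi que l'emploi correct des règles de Rost régissant le comportement de $\partial$ et de $s^\pi$ vis-à-vis de l'extension de Gauss $(E,v)\to(E(s),w)$. Une fois ces compatibilités acquises, c'est l'hypothèse $\rho\in A^0$, c'est-à-dire $\partial_v\rho=0$, qui élimine le terme parasite en $\gamma_s$ et force le morphisme de Gysin à coïncider avec la spécialisation $s_v^\pi$ sur les éléments non ramifiés.
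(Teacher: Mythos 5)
The paper does not actually prove this proposition: it is imported as a black box from Rost (\emph{cf.} \cite[(12.4)]{Ros}), so there is no internal argument to compare yours with; in effect you are supplying the proof the paper outsources. Judged on its own terms, your argument is correct, and the step you single out as the real obstacle is not one: the identity $i^*=(p^*)^{-1}\circ s_w^t\circ\iota_*$ on $A^0$ is precisely Rost's \emph{definition} of the Gysin morphism, in the form recalled in the paper itself. For a regular closed immersion, $i^*$ is by construction the formal composite of $(p^*)^{-1}$ with the specialization to the normal cone $\sigma:C^*(X;\phi)\to C^*(N_ZX;\phi)$ (\emph{cf.} \cite[2.1, 3.3]{Deg4}, \cite[\S 11--12]{Ros}), and $\sigma$ is the composite of the flat pullback along $X\times\GG\to X$, multiplication by the symbol $\{t\}$, and the boundary map of the localization sequence of the principal divisor $N_ZX\subset D_ZX$ cut out by $t$. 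Taking cohomological degree $0$ components at generic points gives exactly your formula, since the deformation space has function field $E(t)=E(s)$, the divisorial valuation of its special fibre is your Gauss valuation $w$ (with $w(t)=1$ and residue field $\kappa(v)(\bar s)$), and the degree-$0$ component of the boundary map is $\partial_w$; your geometric identifications here are all accurate. The remaining computation is a correct use of the cycle premodule rules: $\partial_w\circ\iota_*=\bar\iota_*\circ\partial_v$ is the compatibility of residues with restriction for ramification index $1$; the change of uniformizer uses $\{t\}=\{\pi\}-\{s\}$ together with the rule $\partial_w(\{s\}\cdot x)=-\{\bar s\}\cdot\partial_w(x)$ for the $w$-unit $s$, and the parasitic term dies because $\partial_v\rho=0$; finally $s_w^\pi\circ\iota_*=\bar\iota_*\circ s_v^\pi=p^*\circ s_v^\pi$ follows from the same compatibility and the semilinearity of restriction over $K^M_*$.

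One small step should still be written out. Your conclusion passes from $p^*(s_v^\pi\rho)=\sigma(\rho)=p^*(i^*\rho)$ to $i^*\rho=s_v^\pi\rho$, but a priori only $i^*\rho$ is known to lie in $A^0(Z;\phi)$, where $p^*$ is invertible; $s_v^\pi\rho$ is so far just an element of $\phi(\kappa(v))$. You therefore need the injectivity of the restriction $\bar\iota_*:\phi(\kappa(v))\to\phi(\kappa(v)(\bar s))$ on the whole group, which holds because it is split by specialization at a rational point of $\AA^1_Z$ (say $\bar s=1$). With that remark and the citation of Rost's definition added, your proof is complete and is essentially Rost's own argument.
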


\num \label{Gysin_raffine}
A tout carré cartésien
$$
\xymatrix@=10pt{
Y'\ar^j[r]\ar_g[d]\ar@{}|\Delta[rd] & X'\ar^f[d] \\
Y\ar_i[r] & X
}
$$
tel que $i$ est une immersion fermée régulière,
on associe un \emph{morphisme de Gysin raffiné}
$\Delta^*:X' \doto Y'$.
Ce morphisme $\Delta^*$ vérifie les propriétés suivantes~:
\begin{enumerate}
\item Si $j$ est régulière et le morphisme des cônes
normaux $N_{Y'}(X') \rightarrow g^{-1} N_Y(X)$
est un isomorphisme, $\Delta^*=j^*$.
\item Si $f$ est propre, $i^*f_*=g_*\Delta^*$.
%
\end{enumerate}
De plus, si l'immersion canonique 
$C_{Y'}(X') \rightarrow g^{-1} N_Y(X)$
du cône de $j$ dans le fibré normal de $i$
 est de codimension pure égale à $e$,
le morphisme $\Delta^*$ est de degré cohomologique $e$.

\num \label{transferts_complexe_Rost}
Pour tout couple de schémas lisses $(X,Y)$ et 
 pour toute correspondance finie $\alpha \in c(X,Y)$,
  on définit un morphisme $\alpha^*:Y \doto X$ (\emph{cf.} \cite[6.9]{Deg4}).
  
On peut décrire ce dernier comme suit.
Supposons que $\alpha$ est la classe d'un sous-schéma fermé irréductible
$Z$ de $X \times Y$. Considèrons les morphismes:
$$
X \stackrel p \leftarrow Z \xrightarrow i Z \times X \times Y
\xrightarrow q Y
$$
où $p$ et $q$ désignent les projections canoniques et $i$ le
graphe de l'immersion fermée $Z \rightarrow X \times Y$.
Alors,
\begin{equation} \label{transferts_Chow}
\alpha^*=p_* i^* q^*
\end{equation}
où $i^*$ désigne le morphisme de Gysin de l'immersion fermée
régulière $i$, 
$q^*$ le pullback plat
 et $p_*$ le pushout fini.

La propriété $(\beta \alpha)^*=\alpha^* \beta^*$ est démontrée dans
\cite[6.5]{Deg4}.

\subsection{Suite exacte de localisation}
\label{sec:se_loc_modl}

La suite exacte de localisation n'est pas étudiée (ni rappelée) dans \cite{Deg4}.
Nous la rappelons maintenant suivant \cite{Ros}
 et démontrons un résultat supplémentaire concernant sa fonctorialité. \\
Pour une immersion fermée $i:Z \rightarrow X$ purement de codimension $c$,
d'immersion ouverte complémentaire $j:U \rightarrow X$,
on obtient en utilisant la fonctorialité rappelée ci-dessus
 une suite exacte courte scindée de complexes
\begin{equation} \label{eq:se_loc_modl}
0 \rightarrow C^{p-c}(Z;\phi)_{n-c} \xrightarrow{i_*} C^p(X;\phi)_n
 \xrightarrow{j^*} C^p(U;\phi)_n \rightarrow 0.
\end{equation}
On en déduit une suite exacte longue de localisation
\begin{equation} \label{localisation2}
\hdots \rightarrow A^{p-c}(Z;\phi)_{n-c} \xrightarrow{i_*} A^p(X;\phi)_n
 \xrightarrow{j^*} A^p(U;\phi)_n
  \xrightarrow{\partial_Z^U} A^{p-c+1}(Z;\phi)_{n-c} \rightarrow \hdots
\end{equation}
où le morphisme $\partial_Z^U$ est définit au niveau des complexes
par la formule $\sum_{x \in U^{(p)}, z \in Z^{(p-c+1)}} \partial_z^x$.

Cette suite est naturelle par rapport au pushout propre
et au pullback plat. La proposition suivante est nouvelle~:
\begin{prop} \label{localisation&Gysin}
Considérons un carré cartésien
$$
\xymatrix@=10pt{
T\ar^{\iota'}[r]\ar_k[d]\ar@{}|\Delta[rd] & Z\ar^i[d] \\
Y\ar_\iota[r] & X
}
$$
tel que $\iota$ est une immersion fermée régulière.
Supposons que $i$ (resp. $k$) est une immersion fermée
d'immersion ouverte complémentaire $j:U \rightarrow X$
(resp. $l:V \rightarrow X$).
Notons $h:V \rightarrow U$ le morphisme induit par $\iota$.
Supposons enfin que $i$ (resp. $k$) est
de codimension pure égale à $c$ (resp. $d$).
Alors, le diagramme suivant est commutatif~:
$$
\xymatrix@R=10pt@C=12pt{
\hdots\ar[r] &  A^{p-c}(Z;\phi)_{n-c}\ar^-{i_*}[r]\ar^{\Delta^*}[d]
 & A^p(X;\phi)_n\ar^-{j^*}[r]\ar^{\iota^*}[d]
  & A^p(U;\phi)_n\ar^-{\partial_Z^U}[r]\ar^{h^*}[d]
   & A^{p-c+1}(Z;\phi)_{n-c}\ar[r]\ar^{\Delta^*}[d] & \hdots \\
\hdots\ar[r] &  A^{p-d}(T;\phi)_{n-d}\ar^-{k_*}[r]
 & A^p(Y;\phi)_n\ar^-{l^*}[r]
  & A^p(V;\phi)_n\ar^-{\partial_T^V}[r]
   & A^{p-d+1}(T;\phi)_{n-d}\ar[r] & \hdots
}
$$
\end{prop}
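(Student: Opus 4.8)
La v\'erification se ram\`ene \`a celle des trois carr\'es composant le diagramme. Le carr\'e de gauche, $\iota^* i_* = k_* \Delta^*$, n'est autre que la propri\'et\'e (2) du morphisme de Gysin raffin\'e \'enonc\'ee en \ref{Gysin_raffine}, appliqu\'ee au carr\'e cart\'esien $\Delta$, l'immersion ferm\'ee $i$ (qui est propre) jouant le r\^ole du morphisme propre $f$ de cette propri\'et\'e. Le carr\'e central, $h^* j^* = l^* \iota^*$, exprime la compatibilit\'e des deux tir\'es en arri\`ere. Comme $h$ (obtenu de $\iota$ par changement de base le long de l'ouvert $j$) et $\iota$ sont des immersions ferm\'ees r\'eguli\`eres, tandis que $j$ et $l$ sont des immersions ouvertes, la fonctorialit\'e du morphisme de Gysin pour les morphismes lci (la relation $(fg)^*=g^* f^*$) identifie les deux compos\'es $h^* j^*$ et $l^* \iota^*$ au morphisme de Gysin de l'immersion localement ferm\'ee $V \rightarrow X$ ; le morphisme de Gysin redonne par ailleurs le tir\'e en arri\`ere plat usuel pour les immersions ouvertes $j$ et $l$. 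Seul subsiste donc le carr\'e de droite, $\Delta^* \partial_Z^U = \partial_T^V h^*$, qui fait le fond de l'\'enonc\'e.

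Pour ce carr\'e, le plan consiste \`a travailler au niveau des complexes $C^*(\cdot;\phi)$, o\`u le bord $\partial$ est le morphisme de liaison de la suite exacte courte de localisation \eqref{eq:se_loc_modl}. On chercherait \`a r\'ealiser le triplet $(\Delta^*,\iota^*,h^*)$ comme un morphisme entre la suite \eqref{eq:se_loc_modl} relative \`a $(Z,X,U)$ et celle relative \`a $(T,Y,V)$ : la naturalit\'e du morphisme de liaison entra\^inerait alors la commutation cherch\'ee. L'obstacle principal est que $\Delta^*$, comme $\iota^*$, n'est pas un morphisme de complexes mais une fl\`eche formelle $\doto$ --- un zig-zag passant par la d\'eformation au c\^one normal de $\iota$ et par l'inverse formel de la projection d'un fibr\'e vectoriel.

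Pour lever cet obstacle, on r\'ealiserait $\Delta^*$ sur l'espace de d\'eformation au c\^one normal de $\iota$. Cet espace est compatible \`a la stratification de $X$ par le ferm\'e $Z$ et l'ouvert $U$ --- la d\'eformation de $X$ induit des d\'eformations de $Z$ et de $U$ ---, d'o\`u un morphisme entre les suites de localisation port\'ees par la fibre g\'en\'erique et par la fibre sp\'eciale. En factorisant localement $\iota$ en immersions r\'eguli\`eres de codimension $1$, gr\^ace \`a la propri\'et\'e de composition, on se ram\`enerait au cas d'un diviseur, pour lequel la proposition \ref{Gysin&specialisation} identifie $\Delta^*$ \`a une sp\'ecialisation $s_v^\pi$. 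La commutation entre les $s_v^\pi$ et les r\'esidus $\partial_w$ sur les corps r\'esiduels se ram\`enerait alors, point par point, aux relations d\'efinissant un pr\'e-module de cycles (\cite[1.1]{Deg4}), l'exc\`es de codimension $e=c-d$ \'etant suivi \`a chaque \'etape. Le point technique restant serait de contr\^oler la compatibilit\'e de ces identifications avec le passage \`a la cohomologie $A^*$, assur\'ee par la naturalit\'e de la construction du morphisme de Gysin.
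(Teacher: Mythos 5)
Your reduction to three squares and your treatment of the first two are correct, and for those squares your route is genuinely more economical than the paper's: the left square is exactly property (2) of \ref{Gysin_raffine} applied to $\Delta$ (the closed immersion $i$ playing the role of the proper morphism $f$), and the middle square follows from the composition rule $(fg)^*=g^*f^*$ applied to the equality $j\circ h=\iota\circ l$, together with the fact that the Gysin morphism of an open immersion is the flat pullback. Note only that the composition rule is stated in \cite{Deg4} under a lissifiability hypothesis on the source, which Proposition \ref{localisation&Gysin} does not assume; the paper's proof, which establishes all three squares at once from a single chain-level diagram modelled on the proof of \cite[4.5]{Deg4}, needs no such hypothesis.

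The gap is in the right square, which you correctly single out as the heart of the statement. Your opening plan --- realize $(\Delta^*,\iota^*,h^*)$ as morphisms between the short exact sequences \eqref{eq:se_loc_modl} for $(X,Z,U)$ and $(Y,T,V)$, then conclude by naturality of the connecting morphism --- \emph{is} the paper's proof, but you state it only in the conditional and then abandon it for a reduction that does not work. Concretely: (a) Proposition \ref{Gysin&specialisation} describes only the unrefined Gysin morphism $i^*$, only on $A^0$, and only for a principal regular divisor; it says nothing about $\Delta^*$, about $A^p$ with $p>0$, or about connecting homomorphisms, so it cannot carry the weight you put on it. (b) The commutativity to be proven is not Zariski-local on $X$ in any usable sense, so a \emph{local} factorization of $\iota$ into codimension-one immersions yields nothing globally; gluing such local statements would require precisely the compatibility with localization sequences that is being proven. (c) When $e=c-d>0$ the immersion $k$ need not be regular, and $\Delta^*$ admits no factorization into divisor-type Gysin maps at all. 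What actually closes the argument is the chain-level input of Rost packaged in \cite[2.1, 2.2]{Deg4} and exploited in the proof of \cite[4.5]{Deg4}: the specialization maps to the normal cone, $\sigma\colon C^*(X;\phi)\to C^*(N_YX;\phi)$ and their analogues over $Z$ and $U$, are genuine morphisms of complexes compatible with the sequences \eqref{eq:se_loc_modl}, while the flat pullbacks along the bundle projections $p$, $p_T$, $p_U$ are quasi-isomorphisms; the three zig-zags defining $\Delta^*$, $\iota^*$, $h^*$ then constitute a morphism of localization triples up to inverting quasi-isomorphisms which themselves commute with everything, and the compatibility with $\partial$ becomes ordinary homological algebra. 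Your sketch must be completed along these lines; the divisor reduction cannot replace it.
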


\rem 
\begin{enumerate}
\item On peut généraliser la proposition précédente au cas des morphismes
de Gysin raffinés comme dans la proposition 4.5 de \cite{Deg4}.
Nous laissons au lecteur le soin de formuler cette généralisation.
\item Alors que l'hypothèse sur la codimension pure de $i$ est naturelle,
 celle sur $k$ ne l'est pas, en particulier dans un cas non transverse. 
Elle ne nous sert qu'à exprimer les degrés cohomologiques de tous les 
morphismes et peut aisément être supprimée si on accepte des morphismes
non homogènes par rapport au degré cohomologique.
\end{enumerate}

\begin{proof}
Il suffit de reprendre la preuve de la proposition 4.5
de {\it loc. cit.}
dans le cas du diagramme commutatif~:
$$
\xymatrix@=9pt{
T\ar^{}[rrr]\ar[rdd]\ar^/6pt/k[rrd]
 &&& Z\ar^i[rrd]\ar[rdd] && \\
&& Y\ar^/-20pt/{}[rrr]\ar@{=}[ld] &&& X\ar@{=}[ld] \\
 & Y\ar_\iota[rrr] &&& X. & \\
}
$$
On obtient ainsi un diagramme commutatif\footnote{Il y a une
faute de frappe dans le diagramme commutatif de \emph{loc. cit.}
Il faut lire $t^*N_ZX$ au lieu de $N_YX$.}, avec
les notations analogues de \emph{loc. cit.}
$$
\xymatrix@R=16pt@C=22pt{
Z\ar@{{*}->}^{\sigma'}[r]
          \ar@{{*}->}_{i_*}[d]\ar@{}|{(1)}[rd]
 & C_TZ\ar@{{*}->}^{\nu'_*}[r]
          \ar@{{*}->}^{k''_*}[d]\ar@{}|{(2)}[rd]
 & k^*N_YX
          \ar_{k'_*}[d]\ar@{}|{(3)}[rd]
 & T\ar@{{*}->}^{k_*}[d]\ar@{{*}->}_-{p_T^*}[l] \\
X\ar@{{*}->}_{\sigma}[r]
          \ar@{{*}->}_{j^*}[d]\ar@{}|{(1')}[rd]
 & N_YX\ar@{=}[r]
          \ar@{{*}->}^{l'^*}[d]\ar@{}|{(2')}[rd]
 & N_YX
          \ar_{l'^*}[d]\ar@{}|{(3')}[rd]
 & Y\ar@{{*}->}|/-6pt/{p^*}[l]\ar@{{*}->}^{l^*}[d] \\
U\ar@{{*}->}_{\sigma_U}[r]
 & N_UV\ar@{=}[r] 
 & N_UV & U\ar@{{*}->}^-{p_U^*}[l].
}
$$
Les carrés (1), (2), (3) sont commutatifs d'après \emph{loc. cit.}
et les carrés (1'), (2'), (3') le sont pour des raisons triviales.
Les flèches $\doto$ qui apparaissent dans ce diagramme
sont bien des morphismes de complexes et induisent donc des morphismes
de suite exacte longue de localisation. Il suffit alors d'appliquer
le fait que les morphismes $p^*$, $p_T^*$ et $p_U^*$ sont 
des quasi-isomorphismes pour conclure.
\end{proof}

\begin{cor} \label{cor:fonctorialite_loc_transverse}
Considérons un carré cartésien
$$
\xymatrix@=10pt{
T\ar^{g}[r]\ar_k[d]\ar@{}|\Delta[rd] & Z\ar^i[d] \\
Y\ar_f[r] & X
}
$$
de schémas lisses tels que $i$ (resp. $k$)
est une immersion fermée de codimension pure égale à $c$,
d'immersion ouverte complémentaire $j:U \rightarrow X$
(resp. $l:V \rightarrow X$).
Notons $h:V \rightarrow U$ le morphisme induit par $f$.
Alors, le diagramme suivant est commutatif~:
$$
\xymatrix@R=10pt@C=12pt{
\hdots\ar[r] &  A^{p-c}(Z;\phi)_{n-c}\ar^-{i_*}[r]\ar^{g^*}[d]
 & A^p(X;\phi)_n\ar^-{j^*}[r]\ar^{f^*}[d]
  & A^p(U;\phi)_n\ar^-{\partial_Z^U}[r]\ar^{h^*}[d]
   & A^{p-c+1}(Z;\phi)_{n-c}\ar[r]\ar^{g^*}[d] & \hdots \\
\hdots\ar[r] &  A^{p-c}(T;\phi)_{n-c}\ar^-{k_*}[r]
 & A^p(Y;\phi)_n\ar^-{l^*}[r]
  & A^p(V;\phi)_n\ar^-{\partial_T^V}[r]
   & A^{p-c+1}(T;\phi)_{n-c}\ar[r] & \hdots
}
$$
\end{cor}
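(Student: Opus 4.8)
La stratégie consiste à factoriser le morphisme $f$ par son graphe afin de se ramener aux deux cas déjà disponibles: la naturalité par rapport au pullback plat et la proposition \ref{localisation&Gysin} pour une immersion fermée régulière. Comme $X$ est lisse, le graphe $\Gamma_f\colon Y \to Y\times X$ est une immersion fermée régulière et la projection $p\colon Y\times X \to X$ est lisse, donc plate; la fonctorialité des morphismes de Gysin rappelée avant \ref{Gysin_raffine} donne alors $f^*=\Gamma_f^* \circ p^*$. De la même manière, $g$ et $h$ se factorisent par leurs graphes $\Gamma_g=(k,g)\colon T \to Y\times Z$ et $\Gamma_h=(l,h)\colon V \to Y\times U$, suivis des projections plates $p_Z\colon Y\times Z \to Z$ et $p_U\colon Y\times U \to U$ induites par $p$, d'où $g^*=\Gamma_g^*\circ p_Z^*$ et $h^*=\Gamma_h^*\circ p_U^*$.

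Pour le facteur plat, je tirerais en arrière toute la donnée d'immersion le long de $p$: l'immersion $i'\colon Y\times Z \to Y\times X$ reste fermée de codimension pure $c$, d'immersion ouverte complémentaire $Y\times U$, et la naturalité de la suite \eqref{localisation2} par rapport au pullback plat (\emph{cf.} \ref{sec:se_loc_modl}) fournit un isomorphisme de suites exactes longues entre celle de $(Z,X,U)$ et celle de $(Y\times Z,\,Y\times X,\,Y\times U)$, via $p^*$, $p_Z^*$ et $p_U^*$.

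Pour le facteur immersion régulière, j'appliquerais la proposition \ref{localisation&Gysin} au carré
$$
\xymatrix@=10pt{
T\ar^-{\Gamma_g}[r]\ar_k[d]\ar@{}|\Delta[rd] & Y\times Z\ar^{i'}[d] \\
Y\ar_-{\Gamma_f}[r] & Y\times X
}
$$
dont on vérifie qu'il est cartésien: l'intersection de $\Gamma_f$ et de $i'$ dans $Y\times X$ redonne exactement $T=Y\times_X Z$, et le morphisme induit sur les ouverts complémentaires $V \to Y\times U$ n'est autre que $\Gamma_h$. Avec $\iota=\Gamma_f$ (régulière) et les codimensions $c=d$, \emph{loc. cit.} livre la compatibilité des suites de localisation via le morphisme de Gysin raffiné $\Delta^*\colon Y\times Z \doto T$, via $\Gamma_f^*$ et via $\Gamma_h^*$.

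Le point essentiel, et l'obstacle principal, est d'identifier ce morphisme raffiné $\Delta^*$ au morphisme de Gysin ordinaire $\Gamma_g^*$. Ici tous les schémas sont lisses et le carré est transverse: $\Gamma_g$ est une immersion fermée régulière et le morphisme de cônes normaux $N_T(Y\times Z) \to k^{-1}N_Y(Y\times X)$ est un isomorphisme, de sorte que la propriété (1) de \ref{Gysin_raffine} donne $\Delta^*=\Gamma_g^*$. En recollant les deux compatibilités obtenues par la fonctorialité $(fg)^*=g^* f^*$, à savoir $f^*=\Gamma_f^*\circ p^*$, $g^*=\Gamma_g^*\circ p_Z^*=\Delta^*\circ p_Z^*$ et $h^*=\Gamma_h^*\circ p_U^*$, on conclut que le diagramme de l'énoncé commute.
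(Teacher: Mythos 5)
Your proof is correct and is precisely the argument the paper leaves implicit: the corollary is stated there without proof as a direct consequence of Proposition \ref{localisation&Gysin}, and your graph factorization $f^*=\Gamma_f^*\circ p^*$, combined with the flat naturality of the localization sequence and the identification $\Delta^*=\Gamma_g^*$ via property (1) of \ref{Gysin_raffine}, is exactly the intended derivation. Two minor points: the flat pullbacks $(p^*,p_Z^*,p_U^*)$ yield a \emph{morphism}, not an isomorphism, of long exact sequences, and the isomorphism of normal cones needed for $\Delta^*=\Gamma_g^*$ follows not from smoothness alone but from the hypothesis that $i$ and $k$ have the same pure codimension $c$ (both normal bundles then have rank $\dim X$ over $T$, so the canonical closed immersion between them is an isomorphism).
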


\rem \label{rem:rappels_pf}
 Dans l'article \cite{Deg5},
 une \emph{paire fermée} est un couple $(X,Z)$ tel que $X$ est un schéma
 lisse et $Z$ un sous-schéma fermé.
 On dit que $(X,Z)$ est lisse (resp. de codimension $n$)
 si $Z$ est lisse (resp. purement de codimension $n$ dans $X$). \\
Si $i:Z \rightarrow X$ est l'immersion fermée associée,
 un \emph{morphisme de paires fermées} $(f,g)$ est un carré commutatif
$$
\xymatrix@=10pt{
T\ar^{g}[r]\ar_k[d] & Z\ar^i[d] \\
Y\ar_f[r] & X
}
$$
qui est topologiquement cartésien.
On dit que $(f,g)$ est \emph{cartésien}
(resp. \emph{transverse}) quand le carré est cartésien 
(resp. et le morphisme induit sur les cônes normaux
$C_TY \rightarrow g^{-1}C_ZX$ est un isomorphisme).
\footnote{Lorsque $(X,Z)$ est lisse de codimension $n$
 le fait que le morphisme $(f,g)$ est transverse
 entraîne que $(Y,T)$ est lisse de codimension $n$
  ($k$ est régulier).} \\
Le corollaire précédent montre que la suite de localisation
associée à un module de cycles $\phi$ et une paire
fermée $(X,Z)$ est naturelle par rapport aux morphismes
transverses.
 
\subsection{Module homotopique associé}

\num \label{mcycl->mhtp}
Considérons un module de cycles $\phi$.
D'après \ref{transferts_complexe_Rost},
 $A^0(.;\phi)_*$ définit un préfaisceau gradué avec transferts. 
D'après \cite[6.9]{Deg4}, c'est un faisceau homotopique gradué. 
On le note $F^\phi_*$
et on lui définit une structure de module homotopique comme suit: \\
Soit $X$ un schéma lisse. 
On considère le début de la suite exacte longue de localisation 
\eqref{localisation2}
associée à la section nulle $X \rightarrow \AA^1_X$~:
$$
0 \rightarrow F^\phi_n(\AA^1_X) \xrightarrow{j^*_X} F^\phi_n(\GG \times X)
 \xrightarrow{\partial_0^X} F^\phi_{n-1}(X) \rightarrow \hdots
$$
On peut décrire le morphisme $\partial_0^X$ si $X$ est connexe de corps
des fonctions $E$ comme étant induit par le morphisme
$$
\partial_0^E:\phi_n(E(t)) \rightarrow \phi_{n-1}(E)
$$
associé à la valuation standard de $E(t)$. \\
Soit $s_1:X \rightarrow \GG \times X$ la section unité.
Rappelons que $(F^\phi_n)_{-1}(X)=\Ker(s_1^*)$.
Or par invariance par homotopie de $F^\phi_n$,
 le morphisme canonique $\Ker(s_1^*) \rightarrow\coKer(j^*)$ est un isomorphisme.
Ainsi, le morphisme $\partial_0^X$ induit un morphisme
$$
\epsilon_{n,X}:(F^\phi_n)_{-1}(X) \rightarrow F^\phi_{n-1}(X).
$$
On vérifie que la suite de localisation précédente est compatible
 aux transferts en $X$,
comme cela résulte de la description des transferts rappelée en
\ref{transferts_complexe_Rost} et du corollaire
\ref{cor:fonctorialite_loc_transverse}.
Ainsi, $\epsilon_n$ définit un morphisme de faisceaux homotopiques.
Pour tout corps de fonctions $E$, $A^1(\AA^1_E;\phi)=0$ 
 (\emph{cf.} \cite[(2.2)(H)]{Ros}). Donc la fibre de $\epsilon_n$ en $E$
  est un isomorphisme ce qui implique que c'est un
   isomorphisme de faisceaux homotopiques d'après \ref{foncteurs_fibres_fhtp}.

Ainsi, $(F^\phi_*,\epsilon^{-1}_*)$ définit un module homotopique qui dépend
fonctoriellement de $\phi$.

\section{Equivalence de catégories}

\subsection{Transformée générique}

Considérons un couple $(E,n)$
 formé d'un corps de fonctions $E$ et d'un entier relatif $n$.
Rappelons que l'on a associé dans \cite[3.3.1]{Deg5} au couple $(E,n)$
 un \emph{motif générique}
$$
M(E)\{n\}=\pplim{A \subset E} M(\spec A)\{n\}
$$
dans la catégorie des pro-objets de $\DMgm$.
On note $\DMgmo$ la catégorie des motifs génériques.

\num \label{mhtp->mcycl}
Considérons un module homotopique $(F_*,\epsilon_*)$ ainsi
que le foncteur de réalisation $\varphi:\DMgm^{op} \rightarrow \ab$
qui lui est associé dans la section \ref{realisation_mhtp}.
On note $\hat \varphi$ le prolongement évident de $\varphi$
à la catégorie des pro-objets.
Il résulte de \cite[6.2.1]{Deg5} que la restriction de $\hat \varphi$
à la catégorie $\DMgmo$ est un module de cycles, que l'on note
$\hat F_*$ et que l'on appelle la \emph{transformée générique} de $F_*$.

Rappelons brièvement certaines parties de la construction de \cite{Deg5}.
Notons d'abord que pour tout motif générique $M(E)\{n\}$,
$\hat \varphi(M(E)\{n\})=\hat F_{-n}(E)$ n'est autre que la fibre de $F_{-n}$
en $E$ (\emph{cf.} \ref{foncteurs_fibres_fhtp}). La transformée $\hat F_*$
s'interprète donc comme le \emph{système des fibres} de $F_*$. Ce sont
les \emph{morphismes de spécialisation} entre ces fibres qui donnent
la structure de pré-module de cycles~:
\begin{enumerate}
\item[(D1)] Fonctorialité évidente de $F_*$.
\item[(D2)] (\cite[5.2]{Deg5}) Pour une extension finie $L/E$,
on trouve des modèles respectifs $X$ et $Y$ de $E$ et $L$ ainsi
qu'un morphisme fini surjectif $f:Y \rightarrow X$ dont l'extension
induite des corps de fonctions est isomorphe à $L/E$.
Le graphe de $f$ vu comme cycle de $X \times Y$ définit une correspondance
finie de $X$ vers $Y$ notée $\tra f$ -- la \emph{transposée} de $f$.
On en déduit un morphisme $(\tra f)^*:F_*(X) \rightarrow F_*(Y)$.
On montre que ce morphisme est compatible à la restriction à
 un ouvert de $X$ et il induit donc la fonctorialité attendue.
\item[(D3)] (\cite[5.3]{Deg5}) Soit $E$ un corps de fonctions
 et $x \in E^{\times}$ une unité.
Considérons un modèle $X$ de $E$ munit d'une section inversible
$X \rightarrow \GG$ qui correspond à $x$.
Considérons l'immersion fermée $s_x:X \rightarrow \GG \times X$ induite
par cette section. On en déduit un morphisme
$$
\gamma_x:F_{n-1}(X) \xrightarrow{\epsilon_{n-1}} (F_{n})_{-1}(X)
 \xrightarrow \nu F_{n}(\GG \times X) \xrightarrow{s_x^*}
  F_{n}(X)
$$
où $\nu$ est l'inclusion canonique.
Ce morphisme est compatible à la restriction suivant un ouvert de
$X$ et induit la donnée D3 pour $\hat F_*$.
\item[(D4)] (\cite[5.4]{Deg5}) Soit $(E,v)$ un corps de fonctions valué.
On peut trouver un schéma lisse $X$ munit d'un point $x$ de codimension $1$
tel que l'adhérence réduite $Z$ de $x$ dans $X$ est lisse et l'anneau local
$\mathcal O_{X,x}$ est isomorphe à l'anneau des entiers de $v$.
On pose $U=X-Z$, $j:U \rightarrow X$ l'immersion ouverte évidente. 
Rappelons que le motif relatif $M_Z(X)$ de la paire $(X,Z)$
est définie comme l'objet de $\DMgme$ représenté par le complexe
concentré en degré $0$ et $-1$ avec pour seule différentielle non
nulle le morphisme $j$. Ce motif relatif s'inscrit naturellement
dans le triangle distingué
$$
M_Z(X)[-1] \xrightarrow{\partial'_{X,Z}}
M(U) \xrightarrow{j_*} M(X) \xrightarrow{+1}
$$
On a définit dans \cite[sec. 2.2.5]{Deg5} un \emph{isomorphisme de pureté}
$$
\mathfrak p_{X,Z}:M_Z(X) \rightarrow M(Z)(1)[2].
$$
On en déduit un morphisme
\begin{align*}
\partial_{X,Z}:F_{n}(U)&=\varphi_n(M(U))
 \xrightarrow{\varphi_n(\partial'_{X,Z})} \varphi_n(M_Z(X)[-1]) \\
& \xrightarrow{(\varphi_n(\mathfrak p_{X,Z}^{-1})} \varphi_n(M(Z)\{1\})=(F_n)_{-1}(Z)
 \xrightarrow{\epsilon_n^{-1}} F_{n-1}(Z),
\end{align*}
ayant posé $\varphi_n(\cM)=\varphi(\cM\{-n\})$ pour un motif $\cM$.
Le morphisme résidu du module de cycles $\hat F_*$ est donné par la limite inductive
des morphismes $\partial_{U,Z \cap U}$ suivant les voisinages ouverts $U$ de $x$ dans
$X$.
\end{enumerate}

\subsection{Théorème et démonstration}

\num \label{iso_Rost}
Considérons un module de cycles $\phi$ et $X$ un schéma lisse.
D'après \cite[6.5]{Ros}, on dispose pour tout entier $n \in \ZZ$ 
d'un isomorphisme canonique
$A^p(X;\phi)=H^p_\mathrm{Zar}(X;F^\phi)$. \\
On rappelle la construction de cet isomorphisme tout en le généralisant
au cas de la topologie Nisnevich. 
Notons $X_\nis$ le petit site Nisnevich de $X$.
Les morphismes de $X_\nis$ étant étales,
on obtient, 
en utilisant la fonctorialité rappelée dans \ref{Gersten&plat_propre},
un préfaisceau $V/X \mapsto C^*(V;\phi)$ sur $X_\nis$
noté $C^*_X(\phi)$.
On vérifie que c'est un faisceau Nisnevich
 (voir \cite{Deg5}, preuve de 6.10).
On note $F^\phi_X$ le faisceau $V/X \mapsto A^0(V;\phi)$ sur $X_\nis$.
D'après \cite[6.1]{Ros}, le morphisme évident
$F^\phi_X \rightarrow C^*_X(\phi)$ est un quasi-isomorphisme.
Il induit donc un isomorphisme
$$
H^p_\nis(X;F^\phi_X) \rightarrow H^p_\nis(X;C^*_X(\phi)).
$$
Notons par ailleurs que le complexe $C^*_X(\phi)$ vérifie la propriété
de Brown-Gersten au sens de \cite[1.1.9]{CD2} (voir à nouveau \cite{Deg5}, 
 preuve de 6.10).
D'après la démonstration de \cite[1.1.10]{CD2}, on en déduit que le morphisme
canonique
$$
H^p(C^*(X;\phi)) \rightarrow H^p_\nis(X;C^*_X(\phi))
$$
est un isomorphisme. Ces deux isomorphismes définissent comme annoncé~:
\begin{equation} \label{eq:iso_gpe_chow&coh_nis}
\rho_X:A^p(X;\phi) \xrightarrow \sim H^p_\nis(X;F^\phi).
\end{equation}
Notons par ailleurs que $\rho_X$ est naturel
 par rapport aux morphismes de schémas. 
Considérons d'abord le cas d'un morphisme plat $f:Y \rightarrow X$
 de schémas lisses. Dans ce cas, on déduit suivant \ref{Gersten&plat_propre} 
 un morphisme de complexes
$$f^*:C^*(X;\phi) \rightarrow C^*(Y;\phi)$$
qui est naturel en $X$ par rapport aux morphismes étales. 
La transformation naturelle sur $X_\nis$ correspondante
définit un morphisme dans la catégorie dérivée des faisceaux abéliens
sur $X_\nis$:
$$
\eta_f:C_X^*(\phi) \rightarrow f_*C_Y^*(\phi))=\derR f_*C_Y^*(\phi).
$$
(La dernière identification résulte du fait que
 $C_Y^*(\phi)$ vérifie la propriété de Brown-Gersten.)
Par ailleurs, la structure de faisceau sur $\sm$ de $F^\phi$ définit
 une transformation naturelle
$F_X^\phi \rightarrow f_*F_Y^\phi$
qui se dérive (quitte à prendre une résolution injective de $F^\phi$)
et induit une transformation naturelle dans la catégorie dérivée
des faisceaux abéliens sur $X_\nis$
$$
F^\phi_X \xrightarrow{\tau_f} \derR f_*(F^\phi).
$$
Par définition de la structure de faisceau sur $F^\phi$,
 le diagramme suivant est commutatif:
$$
\xymatrix@=12pt{
F^\phi_X\ar[r]\ar_{\tau_f}[d] & C^*_X(\phi)\ar^{\eta_f}[d] \\
\derR f_*F^\phi_Y\ar[r] & \derR f_*C^*_Y(\phi).
}
$$
On en déduit la naturalité de $\rho$ par rapport aux morphismes
plats. \\
Remarquons que si $f$ est la projection d'un fibré vectoriel,
 $\eta_f$ est un quasi-isomorphisme.
Il reste à considérer le cas d'une immersion fermée $i:Z \rightarrow X$
entre schémas lisses. Notons $N$ le fibré normal associé à $i$.
La spécialisation au fibré normal définie par Rost
(\emph{cf.} \cite[2.1]{Deg4}) est un morphisme de complexes
$$\sigma_ZX:C^*(X;\phi) \rightarrow C^*(N;\phi)
$$
qui est de plus naturel en $X$ par rapport aux morphismes étales
 (\emph{cf.} \cite[2.2]{Deg4}). Notons $\nu$ le morphisme composé
$$
N \xrightarrow p Z \xrightarrow i X.
$$
On en déduit dans la catégorie dérivée un morphisme canonique
$$
\sigma_i:C_X^*(\phi) \rightarrow \derR \nu_*C_N^*(\phi).
$$
Puisque le morphisme $\eta_p$ est un quasi-isomorphisme,
 on obtient alors un morphisme canonique dans la catégorie
 dérivée
$$
\eta_i:C_X^*(\phi) \rightarrow \derR i_*C_Z^*(\phi).
$$
Comme précédemment, on vérifie que le morphisme $\rho_i$
est compatible avec la transformation naturelle 
$\tau_i:F_X^\phi \rightarrow \derR i_*F_Z^\phi$ induite
par la structure de faisceau sur $\sm$ de $F^\phi$,
ce qui permet d'obtenir la fonctorialité de $\rho$
par rapport aux immersions fermées.

Considérons par ailleurs le foncteur de réalisation 
$$
\varphi:\DMgm^{op} \rightarrow \ab
$$
associé au module homotopique $F^\phi$
suivant la section \ref{realisation_mhtp}.
L'isomorphisme $\rho_X$ correspond par définition à un isomorphisme:
$$
A^p(X,\phi)_n \rightarrow \varphi_n(M(X)[-p]).
$$
Considérons de plus une immersion fermée $i:Z \rightarrow X$ entre schémas lisses
 et $j:U \rightarrow X$ l'immersion ouverte du complémentaire.
 Supposons que $i$ est de codimension pure égale à $c$.
 On déduit de la suite exacte de localisation \eqref{eq:se_loc_modl}
 une unique flèche pointillée qui fait commuter le diagramme de
 complexes suivant (on utilise à nouveau le fait que $C_X^*(\phi)$
 vérifie la propriété de Brown-Gersten):
$$
\xymatrix@R=20pt{
0\ar[r] & C^*(Z,\phi)_{n-c}[-c]\ar^-{i_*}[r]\ar@{-->}_{(1)}[d] 
 & C^*(X,\phi)_n\ar^{j^*}[r]\ar[d]
 & C^*(U,\phi)_n\ar[r]\ar[d]
 & 0 \\
0\ar[r] & \derR \Gamma_Z(X,C_X^*(\phi))_{n}\ar[r]
 & \derR \Gamma(X,C_X^*(\phi))_n\ar^{j^*}[r]
 & \derR \Gamma(U,C_X^*(\phi))_n\ar[r]
 & 0.
}
$$
La flèche (1) est un quasi-isomorphisme,
 puisqu'il en est de même des deux autres flèches verticales.
Considérons le motif relatif $M_Z(X)$ associé la paire fermée $(X,Z)$
-- \emph{cf.} \ref{mhtp->mcycl}, (D4).
En utilisant l'isomorphisme (1) et l'identification canonique $H^p_Z(X;F^\phi)_n=\phi_n(M_Z(X)[-p])$,
 on obtient un diagramme commutatif:
$$
\xymatrix@R=20pt{
A^{p-1}(U,\phi)_{n}\ar^-{\partial_Z^U}[r]\ar_{\rho_U}[d]
 & A^{p-c}(Z,\phi)_{n-c}\ar^-{i_*}[r]\ar^{\rho'_{X,Z}}[d] 
 & A^p(X,\phi)_{n}\ar^{\rho_X}[d] \\
\varphi_n(M(U)[-p])\ar[r]
 & \varphi_n(M_Z(X)[-p])\ar[r]
 & \varphi_n(M(X)[-p])
}
$$
dans lequel les flèches verticales sont des isomorphismes.
Le morphisme $\rho'_{X,Z}$ est de plus naturel en $(X,Z)$
par rapport aux morphismes transverses (définis en \ref{rem:rappels_pf}).
Cela résulte en effet du corollaire \ref{cor:fonctorialite_loc_transverse},
 ou plus précisément
 du diagramme commutatif apparaissant
 dans la démonstration de \ref{localisation&Gysin},
 en utilisant d'une part l'unicité de la flèche pointillée (1)
 et d'autre part la description de la fontorialité dérivée de $C_X^*(\phi)$
 établie ci-dessus -- \emph{i.e.} les transformations naturelles $\tau_f$ et $\tau_i$. \\
Comme conséquence de cette construction, on obtient le lemme clé suivant:
\begin{lm} \label{lm:comp_Gersten_loc}
Reprenons les hypothèses introduites ci-dessus.
Considérons le triangle de Gysin (\emph{cf.} \cite[2.3.1]{Deg5})
 associé à $(X,Z)$:
$$
M(U) \rightarrow M(X) \xrightarrow{i^*} M(Z)(c)[2c]
 \xrightarrow{\partial_{X,Z}} M(U)[1].
$$
Alors, le diagramme suivant est commutatif:
$$
\xymatrix@R=12pt@C=40pt{
A^{p-1}(U,\phi)_{n}\ar^-{\partial_Z^U}[r]\ar_{\rho_U}[dd]
 & A^{p-c}(Z,\phi)_{n-c}\ar^-{i_*}[r]\ar^{\rho_{Z}}[d] 
 & A^p(X,\phi)_{n}\ar^{\rho_X}[dd] \\
 & \varphi_{n-c}(M(Z)[c-p])\ar@{=}[d]
 & \\
\varphi_n(M(U)[-p])\ar^-{\varphi_n(\partial_{X,Z})}[r]
 & \varphi_{n}(M(Z)(c)[2c-p])\ar^-{\varphi_n(i^*)}[r]
 & \varphi_n(M(X)[-p]).
}
$$
\end{lm}
\begin{proof}
Considérons l'isomorphisme de pureté définit dans \cite[sec. 2.2.5]{Deg5}
$$
\mathfrak p_{X,Z}:M_Z(X) \rightarrow M(Z)(c)[2c].
$$
Dès lors, d'après ce qui précède, l'isomorphisme composé
\begin{align*}
\rho_{X,Z}:A^{p-c}(Z,\phi)_{n-c}
 &\xrightarrow{\rho'_{X,Z}} \varphi_n(M_Z(X)[-p]) \\
 &\xrightarrow{\varphi(\mathfrak p_{X,Z})} \varphi_n(M(Z)(c)[2c-p])
 =\varphi_{n-c}(M(Z)[c-p])
\end{align*}
s'inscrit dans le diagramme commutatif:
$$
\xymatrix@R=14pt@C=34pt{
A^{p-1}(U,\phi)_{n}\ar^-{\partial_Z^U}[r]\ar_{\rho_U}[dd]
 & A^{p-c}(Z,\phi)_{n-c}\ar^-{i_*}[r]\ar^{\rho_{X,Z}}[d] 
 & A^p(X,\phi)_{n}\ar^{\rho_X}[dd] \\
 & \varphi_{n-c}(M(Z)[c-p])\ar@{=}[d]
 & \\
\varphi_n(M(U)[-p])\ar^-{\varphi_n(\partial_{X,Z})}[r]
 & \varphi_{n}(M(Z)(c)[2c-p])\ar^-{\varphi_n(i^*)}[r]
 & \varphi_n(M(X)[-p]).
}
$$
Il s'agit de voir que $\rho_{X,Z}=\rho_Z$. Notons que d'après ce qui précède,
le morphisme $\rho_{X,Z}-\rho_Z$ est naturel en $(X,Z)$ par rapport
 aux morphisme transverses (définis en \ref{rem:rappels_pf}). 
Soit $P_ZX$ la complétion projective du fibré normal de $Z$ dans $X$.
Considérons l'éclatement $B_Z(\AA^1_X)$ de $Z \times \{0\}$ dans $X$, 
ainsi que le diagramme de déformation classique qui
lui est associé
$$
(X,Z) \xrightarrow{(d,i_1)} (B_Z(\AA^1_X),\AA^1_Z)
 \xleftarrow{(d',i_0)} (P_ZX,Z).
$$
Les carrés correspondants à $(d,i_1)$ et $(d',i_0)$ sont transverses.
On est donc réduit au cas où $(X,Z)=(P_ZX,Z)$.
Dans ce cas, l'immersion fermée $i$ admet une rétraction et
le morphismes $\rho_{X,Z}$ (resp. $\rho_Z$) est déterminé
de manière unique par $\rho_X$.
\end{proof}

\begin{thm} \label{thm:main}
Les foncteurs
$$
\begin{array}{rcl}
\hmtr & \leftrightarrows & \modl \\
F_* & \mapsto & \hat F_* \\
F^\phi_* & \mapsfrom & \phi
\end{array}
$$
sont des équivalences de catégories quasi-inverses
l'une de l'autre.
\end{thm}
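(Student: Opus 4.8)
The plan is to exhibit the two functors as quasi-inverse by constructing a natural isomorphism in each direction. Write $G:\hmtr \to \modl$, $F_* \mapsto \hat F_*$, for the generic transform of \ref{mhtp->mcycl}, and $H:\modl \to \hmtr$, $\phi \mapsto F^\phi_*$, for the functor built in \ref{mcycl->mhtp}; both are well defined by those sections. To prove the statement it suffices to produce natural isomorphisms $\phi \xrightarrow{\sim} \widehat{F^\phi_*}$ in $\modl$ and $F_* \xrightarrow{\sim} F^{\hat F_*}_*$ in $\hmtr$, so I would treat the two composites $G\circ H$ and $H\circ G$ separately.

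For the composite $G\circ H$, I would first compare the underlying graded groups. By the definition of the fibre functors (\ref{foncteurs_fibres_fhtp}), for a function field $E$ one has
$$\widehat{F^\phi_*}(E)=\ilim{A \subset E} A^0(\spec A;\phi),$$
and $A^0(\spec A;\phi)_n$ is the subgroup of $\phi_n(E)$ consisting of the elements whose residues at all codimension-one points of $\spec A$ vanish. The finite-support axiom (FD) guarantees that every element of $\phi_n(E)$ has almost all residues zero, so by shrinking $A$ it becomes everywhere unramified, and the colimit is all of $\phi_n(E)$. The resulting graded isomorphism $\phi \xrightarrow{\sim} \widehat{F^\phi_*}$ is visibly compatible with restriction (D1); it then remains to match the other three data. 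The norm (D2) I would compare via the description of transfers in \ref{transferts_complexe_Rost} together with covariance of Rost's complex under proper pushforward; the multiplication (D3) reduces to the case of a unit, handled by Proposition \ref{Gysin&specialisation}; and the residue (D4) is the crucial point: it is built in \ref{mhtp->mcycl} from the motivic boundary $\partial_{X,Z}$ through the purity isomorphism, and Lemma \ref{lm:comp_Gersten_loc} identifies exactly this boundary with the localization boundary $\partial_Z^U$, which by construction is the residue $\partial_v$ of $\phi$.

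For the composite $H\circ G$, I would invoke Voevodsky's Gersten resolution. For a smooth connected $X$ with function field $E$, this resolution computes the Nisnevich cohomology of each $F_n$ by a Cousin complex beginning with the residue map, whence
$$F_n(X)=\Ker\Big(\hat F_n(E) \to \bigoplus_{x \in X^{(1)}} \hat F_{n-1}(\kappa(x))\Big),$$
where I have used the structural isomorphism $\epsilon$ to identify $(F_n)_{-1}$ with $F_{n-1}$ and hence its fibre with $\hat F_{n-1}$. Once the residue maps are known to agree — by the same comparison as above — this kernel is precisely $A^0(X;\hat F_*)_n=F^{\hat F_*}_n(X)$. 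This gives an isomorphism $F_n(X)\xrightarrow{\sim}F^{\hat F_*}_n(X)$, natural in $X$ (including with respect to finite correspondences, by functoriality of the Gersten resolution) and compatible with the grading and the suspension maps $\epsilon_n$ on both sides, hence an isomorphism of homotopy modules.

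The main obstacle, and genuinely the heart of the argument, is the identification of residues: showing that the boundary of Voevodsky's Gersten resolution coincides with Rost's localization boundary. This is exactly the content of Lemma \ref{lm:comp_Gersten_loc}, which itself rests on the whole study of the functoriality of the localization sequence under Gysin morphisms (\ref{localisation&Gysin}, \ref{cor:fonctorialite_loc_transverse}) and on Proposition \ref{Gysin&specialisation}. Once this compatibility is in hand, I expect the two natural isomorphisms above to be mutually inverse, which can be checked on the generators $h_{0,*}(X)$ of \eqref{notation:h_0*} and on fibres at function fields, thereby establishing that $G$ and $H$ are quasi-inverse equivalences.
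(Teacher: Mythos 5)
Your overall architecture — one natural isomorphism for each composite — is the paper's, and your treatment of the composite $G\circ H$ follows it closely: the colimit computation via (FD), (D1) trivial, (D2) via proper pushforward, (D4) via Lemma \ref{lm:comp_Gersten_loc}. (Your (D3) is over-compressed: besides Proposition \ref{Gysin&specialisation}, the paper needs the characterization of the canonical section $\nu'$ of the localization sequence and three explicit pre-cycle-module relations for $\{t\}.\varphi_*(\rho)$; but the geometric ingredient you cite is the right one.) The genuine gap is in the composite $H\circ G$, at the step you yourself flag as the heart of the argument. The residue identification you need there is \emph{not} what Lemma \ref{lm:comp_Gersten_loc} provides: that lemma compares Rost's boundary $\partial_Z^U$ with the motivic Gysin boundary under the realization of $F^\phi_*$, for a given cycle module $\phi$. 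To use it in your setting you must take $\phi=\hat F_*$, and it then says something about $F^{\hat F_*}_*$ — not about $F_*$ and the differentials of \emph{its} Gersten resolution; transporting the statement to $F_*$ presupposes exactly the isomorphism $F_*\simeq F^{\hat F_*}_*$ you are constructing. What you actually need is that Voevodsky's Gersten differentials for $F_*$ coincide with the motivic boundaries $\partial_{X,Z}$ (through purity) which \emph{define} the residues of $\hat F_*$ in \ref{mhtp->mcycl}(D4); that is a different comparison (in substance the identification of the coniveau $E_1$-complex with Rost's complex, i.e. the result of \cite{Deg5} invoked only later, in \ref{2ssp}), and your sketch never supplies it.

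Second, and more seriously, the naturality of your kernel identification with respect to finite correspondences cannot be dispatched by the phrase \og functoriality of the Gersten resolution \fg: that functoriality is precisely what has to be proven — the paper's introduction stresses that the whole substance of the theorem lies in it. On the Rost side the transfer on $A^0(.;\hat F_*)$ is defined by $\alpha^*=p_*i^*q^*$ (formula \eqref{transferts_Chow}); on the Voevodsky side it comes from the sheaf-with-transfers structure of $F_*$; nothing identifies the two for free. The paper's proof of this direction is organized the opposite way from yours: it builds $b_X:F_*(X)\rightarrow A^0(X;\hat F_*)$ directly from restriction maps (no Gersten resolution needed), spends essentially all its effort proving transfer-naturality — injecting into the generic fibre via a dense open, using perfectness of $k$ to smooth the support of $\alpha$, then treating separately the flat case, the transpose of a finite surjective morphism, and the delicate closed-immersion case via Proposition \ref{Gysin&specialisation} and \cite[2.6.5]{Deg5} — and only then concludes that $b$ is an isomorphism by the cheap fibre argument \ref{foncteurs_fibres_fhtp}. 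In your sketch the isomorphism of groups is made to look like the main point and the transfers an afterthought; as it stands, the transfer compatibility is asserted, not proved, so the proposal does not yet establish that $b$ is a morphism in $\hmtr$, which is what both the statement and the conservativity argument require.
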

\begin{proof}
Il s'agit de construire les deux isomorphismes naturels qui réalisent
l'équivalence. \\
\underline{Premier isomorphisme}~:
Considérons un module de cycles $\phi$, $F^\phi_*$ le module homotopique
associé. Par définition, pour tout corps de fonctions $E$,
il existe une flèche canonique
$$
a_E:\hat F_n^\phi(E)=\ilim{A \subset E} A^0(\spec A;\phi)_n \rightarrow \phi_n(E).
$$
C'est trivialement un isomorphisme et il reste à montrer que $a$ définit
un morphisme de modules de cycles. La compatibilité à (D1) est évidente.
La compatibilité à (D2) résulte du fait que pour un morphisme fini surjectif
$f:Y \rightarrow X$, le morphisme $A^0(\tra f;\phi)$ est le pushout $f_*$
propre (\emph{cf.} \cite[6.6]{Deg5}).

\smallskip

\noindent \textit{Compatibilité à (D3)}~:
On reprend les notations du point (D3) de \ref{mhtp->mcycl} pour le module homotopique $F^\phi_*$
et pour une unité $x \in E^\times$. On considère la flèche canonique
$$
a'_E:\hat F_n^\phi(\GG \times (E))
 =\ilim{A \subset E} A^0\big(\spec{A[t,t^{-1}]};\phi\big)_n
  \longrightarrow \phi_n(E(t)).
$$
Pour tout $E$-point $y$ de $\spec{E[t]}$,
 on note $v_y$ la valuation de $E(t)$ correspondante,
  d'uniformisante $t-y$.
D'après la proposition \ref{Gysin&specialisation}, 
le diagramme suivant est commutatif~:
$$
\xymatrix@=12pt{
\hat F_n^\phi(\GG \times (E))\ar^-{s_x^*}[r]\ar_{a'_E}[d]
 & \hat F_n^\phi(E)\ar^{a_E}[d] \\
\phi_n(E(t))\ar^-{s_{v_x}^{t-x}}[r] & \phi_n(E).
}
$$

Par définition du morphisme structural $\epsilon_*$ de $F^\phi_*$
(\emph{cf.} \ref{mcycl->mhtp}), le morphisme 
$\nu':\hat F_{n-1}^\phi(E) \xrightarrow {\epsilon_{n-1}} (\hat F_n^\phi)_{-1}(E)
 \xrightarrow \nu \bar F_n^\phi(\GG \times (E))$
est la section de la suite exacte courte
$$
0 \rightarrow \hat F_n^\phi(E) \xrightarrow{p^*} \hat F_n^\phi(\GG \times (E)) 
 \xrightarrow{\partial} \hat F_{n-1}^\phi(E) \rightarrow 0
$$
qui correspond à la rétraction $s_1^*$ de $p^*$, 
 pour $s_1:(E) \rightarrow \GG \times (E)$ la section unité de
  la projection $p:\GG \times (E) \rightarrow (E)$.
En particulier, $\nu'$ est caractérisé par les propriétés
 $\partial \nu'=1$ et $s_1^* \nu'=0$.

Notons $\varphi:E \rightarrow E(t)$ l'inclusion canonique. On peut vérifier
en utilisant les relations des pré-modules de cycles les formules suivantes~:
\begin{align*}
(1) & \ \forall \rho \in \phi_n(E), \partial_{v_0}(\{t\}.\varphi_*(\rho))=\rho. \\
(2) & \ \forall y \in E^\times,
 \forall \rho \in \phi_n(E),\partial_{v_y}(\{t\}.\varphi_*(\rho))=0. \\
(3) & \ \forall y \in E^\times,
 \forall \rho \in \phi_n(E),s_{v_y}^{t-y}(\{t-y\}.\varphi_*(\rho))=\{y\}.\rho.
\end{align*}
D'après (2), l'application $\phi_n(E) \rightarrow \phi_n(E(t)),
 \rho \mapsto \{t\}.\varphi_*(\rho)$ induit une unique flèche pointillée
  rendant le diagramme suivant commutatif~:
$$
\xymatrix@R=12pt@C=20pt{
\hat F_n^\phi(E)\ar@{-->}[r]\ar_{a_E}[d]
 & \hat F_n^\phi(\GG \times (E))\ar^{a'_E}[d] \\
\phi_n(E(t))\ar^-{\{t\}.\varphi_*}[r] & \phi_n(E).
}
$$
D'après la relation (1) et la relation (3) avec $y=1$,
 cette flèche pointillée satisfait les deux propriétés caractérisant $\nu'$.
On déduit donc de la relation (3) avec $y=x$ que
 $\nu' \circ s_x^*(\rho)=\{x\}.\rho$ ce qui prouve la relation attendue.
 
\smallskip

\noindent \textit{Compatibilité à (D4)}~: 
Considérons les notations du point (D4) dans \ref{mhtp->mcycl}.
La compatibilité au résidu est alors une conséquence directe
 du lemme \ref{lm:comp_Gersten_loc} appliqué,
 pour tout voisinage ouvert $U$ de $x$ dans $X$,
 à l'immersion fermée $i:Z \cap U \rightarrow U$ dans le cas $c=1$, $p=1$. \\
\underline{Deuxième isomorphisme}~: 
Considérons un module homotopique $(F_*,\epsilon_*)$.
Pour tout schéma lisse $X$, en considérant la limite inductive des 
morphismes de restriction $F(X) \rightarrow F(U)$
 pour les ouverts $U$ de $X$,
  on obtient une flèche $F_*(X) \rightarrow C^0(X;\hat F_*)$
  qui induit par définition des différentielles
   un morphisme
   $b_X:F_*(X) \rightarrow A^0(X;\hat F_*)$
    homogène de degré $0$. \\
Le point clé est de montrer que cette flèche est naturelle
par rapport aux correspondances finies.
Soit $\alpha \in c(X,Y)$ une correspondance finie entre schémas lisses,
que l'on peut supposer connexes.
Rappelons que pour tout ouvert dense $j:U \rightarrow X$,
le morphisme $j^*:A^0(X;\hat F_*) \rightarrow A^0(U;\hat F_*)$
est injectif d'après la suite exacte de localisation \eqref{localisation2}.
Ainsi, on peut remplacer $\alpha$ par $\alpha \circ j$ et $X$ par $U$.
Par additivité, on se ramène encore au cas où $\alpha$ est la classe
d'un sous-schéma fermé intègre $Z$ de $X \times Y$, fini et dominant sur $X$.
Dès lors, $\alpha \circ j=[Z \times_X U]$. Donc puisque $k$ est parfait,
quitte à réduire $X$, on peut supposer que $Z$ est lisse sur $k$.
Rappelons que d'après \ref{transferts_complexe_Rost}, $\alpha^*=p_* i^* q^*$
pour les morphismes évidents suivants
$$
X \stackrel p \leftarrow Z \xrightarrow i Z \times X \times Y
\xrightarrow q Y.
$$
On est donc ramené à vérifier la naturalité dans les trois cas suivants~: \\
\textit{Premier cas}~: Si $\alpha=q$ est un morphisme plat,
 la compatibilité résulte alors de la définition du pullback plat
 sur $A^0(.;\hat F_*)$ est de la définition de D1. \\
\textit{Deuxième cas}~: Si $\alpha=\tra p$,
 $p:Z \rightarrow X$ morphisme fini surjectif entre schémas lisses.
Ce cas résulte de la définition du pushout propre sur $A^0$ et de la définition
de D2. \\
\textit{Troisième cas}~: Supposons $\alpha=i$, pour $i:Z \rightarrow X$ immersion
fermée régulière entre schémas lisses. Comme on l'a déjà vu,
l'assertion est locale en $X$.
On se réduit donc en factorisant $i$ au cas de codimension $1$. 
On peut aussi supposer que
 $Z$ est un diviseur principal paramétré par $\pi \in \mathcal O_{X}(U)$,
 pour $U=X-Z$.
D'après la proposition \ref{Gysin&specialisation}, on est ramené à
montrer que le diagramme suivant est commutatif~:
$$
\xymatrix@R=10pt@C=20pt{
F_*(X)\ar[d]\ar^{i^*}[r] & F_*(Z)\ar[d] \\
\hat F_*(\kappa(X))\ar^-{s_v^\pi}[r]
 & \hat F_*(\kappa(Z)).
}
$$
Tenant compte de la naturalité du morphisme structural $\epsilon_*$
 du module homotopique $F_*$, 
 on se ramène à la commutativité du diagramme~:
$$
\xymatrix@R=10pt@C=24pt{
\varphi(M(X)\{1\})\ar_{j^*}[d]\ar^-{i^*}[rrr] &&& \varphi(M(Z)\{1\})\ar@{=}[d] \\
\varphi(M(U)\{1\})\ar^-\nu[r]
 & \varphi(M(\GG \times U))\ar^-{\gamma_\pi^*}[r]
 & \varphi(M(U))\ar^-{\partial_{X,Z}}[r]
 & \varphi(M(Z)\{1\}) &
}
$$
où $\nu$ est l'inclusion canonique, 
$\gamma_\pi$ est induit par $\pi:U \rightarrow \GG$
 et $\partial_{X,Z}=\partial'_{X,Z} \circ \mathfrak p_{X,Z}^{-1}$ avec les notations
 de \ref{mhtp->mcycl}(D4) est le morphisme résidu au niveau des motifs.
Or la commutativité de ce diagramme résulte exactement de \cite[2.6.5]{Deg5}.

Le morphisme $b:F_* \rightarrow A^0(.;\hat F_*)$ est donc un morphisme
de faisceaux avec transferts. Or, il est évident que le morphisme induit sur
les fibres en un corps de fontions quelconque est un isomorphisme.
Il en résulte (\emph{cf.} \ref{foncteurs_fibres_fhtp}) que $b$ est un isomorphisme.
Enfin, on établit facilement la compatibilité de $b$ avec
les morphismes structuraux des modules homotopiques $F_*$
et $A^0(.;\hat F_*)$ compte tenu de la construction \ref{mcycl->mhtp} --
on utilise simplement la fonctorialité de $b$ par rapport à
$j_X:\GG \times X \rightarrow \AA^1_X$
et $s_1:X \rightarrow \GG \times X$.
\end{proof}

\num Le théorème précédent montre que la catégorie des modules
de cycles est monoïdale symétrique avec pour élément neutre le
foncteur de K-théorie de Milnor. Le produit tensoriel est
de plus compatible au foncteur de décalage de la graduation
des modules de cycles -- \emph{i.e.} le foncteur
noté $\{\pm 1\}$ dans $\hmtr$.

A tout schéma lisse $X$, on associe un module de cycles
$$
\hhlrep_{0,*}(X)=(h_{0,*}(X))^\wedge.
$$
D'après le théorème précédent,
la famille de modules de cycles $(\hhlrep_{0,*}(X)\{n\})$
pour un schéma lisse $X$ et un entier $n \in \ZZ$
est génératrice dans la catégorie abélienne $\modl$. \\
Notons que ces générateurs caractérisent le produit tensoriel
des modules de cycles:
$$
\hhlrep_{0,*}(X)\{n\} \otimes \hhlrep_{0,*}(Y)\{m\}
 =\hhlrep_{0,*}(X \times Y)\{n+m\}.
$$
On peut enfin donner une formule explicite pour
 calculer ces modules de cycles.
Considérons pour tous schémas lisses $X$ et $Y$ le groupe
$$
\pi(Y,X)=\coKer\big(c(\AA^1_Y,X)
 \xrightarrow{s_0^*-s_1^*} c(Y,X)\big).
$$
Notons que ce groupe s'étend de manière évidente aux schémas
réguliers essentiellement de type fini sur $k$
et que l'on dipose d'un théorème de commutation aux limites projectives
de schémas pour ces groupes étendus (\emph{cf.} \cite[4.1.24]{Deg7}).
Par ailleurs, si $E$ est un corps de fonctions,
 $\pi(\spec E,X)=CH_0(X_E)$, groupe de Chow des $0$-cycles
 de $X$ étendu à $E$.

On déduit de tout cela les calculs suivants:
Pour tout corps de fonctions $E$
 et tout schéma lisse $X$, 
$$
\hhlrep_{0,0}(X).E=CH_0(X_E).
$$
De plus, pour tout entier $n>0$,
\begin{align*}
\hhlrep_{0,n}(X).E&=
\coKer\big(
\oplus_{i=0}^n CH_0(\GG^{n-1} \times X_E)
 \rightarrow CH_0(\GG^n \times X_E)\big) \\
\hhlrep_{0,-n}(X).E&=
\Ker\big(
\pi(\GGx E^n,X)
 \rightarrow\oplus_{i=0}^n \pi(\GGx E^{n-1},X)\big)
\end{align*}
où les flèches sont induites par les injections évidentes
$\GG^i \times \{1\} \times \GG^{n-1-i} \rightarrow \GG^n$.

\subsection{Résolution de Gersten et cohomologie}

Soit $F_*$ un module homotopique, $\phi=\hat F_*$ sa transformée générique.
Considérons l'isomorphisme $b:F_* \rightarrow F^{\phi}_*$ qui lui
est associé d'après le théorème précédent. Compte tenu de l'isomorphisme
\eqref{eq:iso_gpe_chow&coh_nis}, on en déduit un isomorphisme
$$
\epsilon_X:H^n_\nis(X;F_*)
 \xrightarrow{b_*} H^n_\nis(X;F^{\phi}_*)
 \xrightarrow{\rho_X^{-1}} A^n(X;\hat F_*).
$$
\begin{prop} \label{iso_coh_gpe_chow}
Avec les notations ci-dessus, $\epsilon_X$ est un isomorphisme
compatible avec les transferts par rapport à $X$.
\end{prop}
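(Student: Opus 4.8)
Proposition \ref{iso_coh_gpe_chow} asserts that the isomorphism
$$
\epsilon_X:H^n_\nis(X;F_*)\xrightarrow{\ \sim\ }A^n(X;\hat F_*),
$$
defined as the composite $\rho_X^{-1}\circ b_*$, is compatible with transfers in $X$. The fact that $\epsilon_X$ is an isomorphism is already built in: $b:F_*\to F^{\hat F_*}_*$ is an isomorphism by Theorem \ref{thm:main}, and $\rho_X$ is the isomorphism \eqref{eq:iso_gpe_chow&coh_nis} of Rost, generalized to the Nisnevich topology in \ref{iso_Rost}. So the only real content is the naturality with respect to finite correspondences.

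Let me sketch how I would prove this naturality.

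**Strategy: reduce to the three generating cases of a finite correspondence.** The map $b_*$ is functorial for finite correspondences essentially by construction, since $b:F_*\to A^0(\cdot;\hat F_*)$ was shown in the proof of Theorem \ref{thm:main} to be a morphism of sheaves with transfers. Thus the entire burden falls on showing that $\rho_X$ — a priori only natural for morphisms of schemes (flat pullbacks and closed immersions, as established in \ref{iso_Rost}) — is in fact natural for arbitrary finite correspondences. The plan is to invoke the decomposition \eqref{transferts_Chow} from \ref{transferts_complexe_Rost}: for a finite correspondence $\alpha\in c(X,Y)$ given by an integral closed subscheme $Z\subset X\times Y$ finite and dominant over $X$, one has $\alpha^*=p_*\,i^*\,q^*$ with
$$
X\xleftarrow{p}Z\xrightarrow{i}Z\times X\times Y\xrightarrow{q}Y.
$$
By the additivity of both sides and the injectivity of restriction to a dense open (via \eqref{localisation2}), I would first shrink $X$ so that, $k$ being perfect, $Z$ becomes smooth; this is exactly the reduction already performed in the proof of Theorem \ref{thm:main}. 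It then suffices to check naturality of $\rho$ separately for $q^*$ (a flat pullback), $p_*$ (a proper pushforward along a finite surjection of smooth schemes), and $i^*$ (the Gysin map of a regular closed immersion).

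**Carrying out the three cases.** For the flat pullback $q^*$, naturality of $\rho$ is already established in \ref{iso_Rost} through the commuting square relating $\eta_f$ and $\tau_f$. For the Gysin map $i^*$ of a closed immersion, \ref{iso_Rost} provides the derived transformation $\eta_i$ via Rost's specialization to the normal cone and the fact that $\eta_p$ is a quasi-isomorphism for the projection of a vector bundle, together with the compatibility with $\tau_i$; this yields naturality of $\rho$ for closed immersions. The proper pushforward $p_*$ is the dual situation: here I would appeal to the covariance of $C^*(\cdot;\phi)_*$ for proper equidimensional morphisms recalled in \ref{Gersten&plat_propre}, matching it against the covariance of $F^\phi$ as a sheaf with transfers, which is precisely the content of the compatibility (D2) verified in Theorem \ref{thm:main}. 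Composing these three compatibilities along \eqref{transferts_Chow} reconstitutes naturality of $\rho$, and hence of $\epsilon_X=\rho_X^{-1}\circ b_*$, with respect to $\alpha$.

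**The main obstacle.** The delicate point is that $\rho_X$ is genuinely only a sheaf-theoretic comparison, defined degree by degree on the small Nisnevich site, so its naturality is a priori constrained to honest morphisms of schemes; promoting it to finite correspondences is not formal. The crux lies in the Gysin case, where one must verify that the motivic Gysin map $i^*$ matches Rost's normal-cone construction at the level of the comparison $\rho$ — this is exactly the kind of compatibility encapsulated by Lemma \ref{lm:comp_Gersten_loc}, which identifies the boundary map $\partial_Z^U$ of the localization sequence \eqref{localisation2} with the motivic residue $\varphi_n(\partial_{X,Z})$ under $\rho$. I expect that the whole naturality statement for transfers will follow by feeding the three cases above through \eqref{transferts_Chow}, with Lemma \ref{lm:comp_Gersten_loc} supplying the essential identification in the Gysin step.
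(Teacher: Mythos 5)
Your reduction of the problem to the compatibility of $\rho_X$ with transfers (the $b_*$ part being handled by Theorem \ref{thm:main}) agrees with the paper's first step, but the way you then establish this compatibility has a genuine gap. Your plan is to shrink $X$ to a dense open so that the support $Z$ of the correspondence becomes smooth, invoking ``the injectivity of restriction to a dense open (via \eqref{localisation2})''. That injectivity holds only in cohomological degree $0$: the localization sequence gives $A^{-c}(Z;\phi)=0$, hence $A^0(X;\phi)\hookrightarrow A^0(U;\phi)$, and likewise $F_*(X)\hookrightarrow F_*(U)$ for a homotopy sheaf. For $n>0$ neither restriction $A^n(X;\phi)\to A^n(U;\phi)$ nor $H^n_\nis(X;F^\phi)\to H^n_\nis(U;F^\phi)$ is injective in general (for instance $CH^1(\PP^1)=A^1(\PP^1;K^M_*)_1=\ZZ$ maps to $CH^1(\AA^1)=0$). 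So the shrinking that was legitimate in the proof of Theorem \ref{thm:main} --- a statement about sections, i.e.\ degree $0$ --- is simply not available for Proposition \ref{iso_coh_gpe_chow}, which concerns all $H^n$; this is exactly the obstruction that forces the paper to argue differently. A second, smaller gap: even granting the three-case decomposition, the compatibility of $\rho$ with the proper pushforward $p_*$ is not among the naturalities established in \ref{iso_Rost} (which treats only flat pullbacks and closed immersions), and you cannot extract it from the compatibility (D2) of Theorem \ref{thm:main}, which is again a statement about stalks at function fields, hence about $A^0$, and says nothing about the maps induced on higher Nisnevich cohomology.

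The paper's actual proof avoids both problems by a different device. It computes $H^n_\nis(X;F^\phi_*)$ as a colimit over Nisnevich hypercoverings $\cX/X$, realizes $\rho_X^{-1}$ concretely as a zig-zag $F^\phi_*(\cX)\to\Tot C^*(\cX;\phi)\leftarrow C^*(X;\phi)$, and then proves a lifting lemma: for any finite correspondence $\alpha\in c(Y,X)$ and any Nisnevich hypercovering $\cX$ of $X$, there exist a hypercovering $\cY$ of $Y$ and a correspondence $\tilde\alpha:\rep\cY\to\rep\cX$ lifting $\alpha$ (this uses that $\rep\cX\to\rep X$ is a quasi-isomorphism, together with an inductive construction). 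Naturality of $\rho_X$ then follows from the functoriality of $C^*(\cdot;\phi)$ and $A^*(\cdot;\phi)$ with respect to finite correspondences applied to $\tilde\alpha$ --- no shrinking and no case division are needed. This hypercovering lift is the idea missing from your argument; without it, or some substitute working uniformly in all cohomological degrees, the proposal does not go through.
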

\begin{proof}
Par définition, on est ramené à prouver la compatibilité de 
$\rho_X^{-1}:H^n_\nis(X;F^\phi_*) \rightarrow A^n(X;\phi)$ 
par rapport aux transferts en $X$ pour un module de cycles
arbitraire $\phi$. \\
Pour un schéma simplicial $\cX$, on note $F^\phi(\cX)$ 
le complexe des sommes alternées 
associé au groupe abélien cosimplicial évident.
Rappelons l'isomorphisme canonique
$$
H^n_\nis(X;F_*^\phi)=\ilim{\cX/X} H^n F^\phi_*(\cX)
$$
où la limite inductive parcourt les hyper-recouvrements
Nisnevich de $X$. \\
Pour un schéma simplicial $\cX$, on note $\Tot C^*(\cX;\phi)$
le complexe total associé au bicomplexe évident.
Pour un hyper-recouvrement Nisnevich $\cX/X$,
on obtient alors des morphismes de complexes,
naturels en $\cX$,
$$
F^\phi_*(\cX) \xrightarrow{(1)} \Tot C^*(\cX;\phi)
 \xleftarrow{(2)} C^*(X;\phi).
$$
Suivant la construction de \eqref{eq:iso_gpe_chow&coh_nis},
le morphisme $\rho_X^{-1}$ est égal à~:
$$
\ilim{\cX/X} H^nF^\phi_*(\cX)
 \rightarrow \ilim{\cX/X} H^n\Tot C^*(\cX;\phi)
 \xrightarrow{(2')} H^nC^*(X;\phi)
$$
où la flèche $(2')$ est la réciproque de la limite des
flèches de type $H^n(2)$. \\
Pour vérifier la compatibilité de cet isomorphisme
avec les transferts, on est alors ramené au lemme suivant~:
\begin{lm}
Soit $\alpha \in c(Y,X)$ une correspondance finie
entre deux schémas lisses. 
Alors, pour tout hyper-recouvrement $\cX$ Nisnevich de $X$,
 il existe un hyper-recouvrement Nisnevich $\cY$ de $Y$
 et une transformation naturelle $\tilde \alpha$
 faisant commuter le diagramme
$$
\xymatrix@=10pt{
\rep \cY\ar^{\tilde \alpha}[r]\ar[d] & \rep \cX\ar[d] \\
\rep Y\ar^\alpha[r] & \rep X
}
$$ 
où $\rep \cX$ (resp. $\rep \cY$) désigne le complexe
de faisceaux associé au faisceau simplicial évident.
\end{lm}
Notons que d'après \cite[4.2.9]{Deg7},
le morphisme canonique $\rep \cX \rightarrow \rep X$ est un quasi-isomorphisme.
Le lemme en résulte dès lors,
 suivant une construction par induction.

Ce lemme nous permet de conclure. En effet,
considérant $\alpha$ et $\tilde \alpha$ comme ci-dessus,
les morphismes du type $H^n(1)$ et $H^n(2)$
sont naturels par rapport à $\alpha$ et $\tilde \alpha$,
comme il résulte de la compatibilité des groupes $A^n(.;\phi)$
par rapport au produit de composition des correspondances.
\end{proof}

\num \label{iso_Bloch}
Considérons le module homotopique $\tatex *$.
Suivant \cite[3.4]{SV2}, pour tout corps de fontions $E$,
 $\tatex *(E) \simeq K_*^M(E)$. 
Cet isomorphisme est de plus compatible
 aux structures de module de cycles.
Pour la norme, cela résulte de \cite[3.4.1]{SV2}.
Pour le résidu associé à un corps de fontions valué $(E,v)$,
on se réduit à montrer que $\partial_v(\pi)=1$ pour
le module de cycle $\htatex *$, 
ce qui résulte de \cite[2.6.5]{Deg7}.

On en déduit l'isomorphisme de Bloch\footnote{En effet, 
d'après l'isomorphisme que l'on vient d'expliciter,
le faisceau gradué $\tatex *$ est le faisceau de
K-théorie de Milnor non ramifié.} 
pour tout schéma lisse $X$~:
$$
\epsilon_X^\mathrm B:H^n_\nis(X;\tatex n)
 \rightarrow A^n(X;K_*^M)_n=CH^n(X).
$$
Cet isomorphisme est naturel par rapport aux transferts.

Rappelons que pour tout module de cycles $\phi$,
il existe un accouplement de modules de cycles 
$K_*^M \times \phi \rightarrow \phi$
au sens de \cite[1.2]{Ros}.
Il induit d'après \cite[par. 14]{Ros} un accouplement
$$
A^m(X;\phi)_r \otimes CH^n(X)
 \rightarrow A^{m+n}(X;\phi)_{r+n}.
$$
Considérant un module homotopique $F_*$,
on dispose d'un (iso)morphisme de modules homotopiques
$S^*_t \otimes F_* \rightarrow F_*$.
Pour un schéma lisse $X$, de diagonale $\delta:X \rightarrow X \times X$,
on en déduit un accouplement
$$
H^m(X;F_*)_r \otimes H^n(X;\tatex *)_n \rightarrow H^{m+n}(X;F_*)_{r+n}
$$
définit en associant aux morphismes
 $a:h_{0,*}(X) \rightarrow \tatex *\{n\}[n]$
  et $b:h_{0,*}(X) \rightarrow F_*\{r\}[m]$ 
 la composée
$$
h_{0,*}(X) \xrightarrow{\delta_*} h_{0,*}(X) \otimes h_{0,*}(X)
 \xrightarrow{a \otimes b} \tatex * \otimes F_*\{n+r\}[n+m]
 \xrightarrow \sim F_*\{n+r\}[n+m].
$$
Nous laissons au lecteur le soin de vérifier la compatibilité
suivante~:
\begin{lm} \label{iso_Bloch&produit}
Avec les notations introduites ci-dessus, le diagramme suivant est commutatif~:
$$
\xymatrix@R=18pt@C=28pt{
H^m(X;F_*)_r \otimes H^n(X;\tatex *)_n
  \ar[r]\ar_{\epsilon_X \otimes \epsilon_X^\mathrm B}[d]
 & H^{n+m}(X;F_*)_{n+r}\ar^{\epsilon_X}[d] \\
A^m(X;\phi)_r \otimes CH^n(X)\ar[r] & A^{m+n}(X;\hat F_*)_{n+r}
}
$$
\end{lm}
Ainsi, l'isomorphisme $\epsilon_X^\mathrm B$ est compatible au produit,
et l'isomorphisme $\epsilon_X$ est compatible aux structures
de module décrites ci-dessus.

\num Notons $\varphi:\DMgm^{op} \rightarrow \ab$ le foncteur de
réalisation associé à $F_*$ (\emph{cf.} section \ref{realisation_mhtp}). 
D'après la proposition précédente,
le foncteur $\varphi$ prolonge le foncteur $A^*(.;\hat F_*)$.
Ainsi, on a étendu canoniquement la cohomologie à coefficients
dans un module de cycles quelconque en un foncteur de réalisation
triangulé de $\DMgm$.
Nous notons encore
$$
\epsilon_X:\varphi(M(X)\{-r\}[-n]) \rightarrow A^n(X;\hat F_*)_r
$$
l'isomorphisme qui se déduit par construction de \ref{iso_coh_gpe_chow}.

Soit $f:Y \rightarrow X$ un morphisme projectif
entre schémas lisses, de dimension relative constante $d$.
Dans \cite[2.7]{Deg6}, on a construit 
$f^*:M(X)(d)[2d] \rightarrow M(Y)$, 
morphisme de Gysin associé à $f$ dans $\DMgm$.
\begin{prop} \label{Gysin&pushout}
Considérons les notations introduites ci-dessus.
Alors, le carré suivant est commutatif~:
$$
\xymatrix@R=10pt@C=30pt{
\varphi\big(M(X)\{d-r\}[d-n]\big)\ar^-{\varphi(f^*)}[r]\ar_{\epsilon_X}[d]
 & \varphi\big(M(Y)\{-r\}[-n]\big)\ar^{\epsilon_Y}[d] \\
A^{n-d}(X;\hat F_*)_{r-d}\ar^-{f_*}[r] & A^n(Y;\hat F_*)_r
}
$$
\end{prop}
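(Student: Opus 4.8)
The plan is to reduce the statement to the two elementary classes of morphisms out of which both Gysin constructions are assembled, and to treat each using compatibilities already in hand. First I would factor the projective morphism $f\colon Y\to X$ as $Y\xrightarrow{i}\PP^N_X\xrightarrow{p}X$, where $i$ is a regular closed immersion of smooth schemes and $p$ is the canonical projection. The motivic Gysin morphism of \cite{Deg6} and the Gysin morphism on the groups $A^*(\,\cdot\,;\hat F_*)$ of \cite{Deg4} both satisfy the contravariant functoriality $(gh)^*=h^*g^*$ (\emph{cf.} the composition property recorded after \ref{transferts_complexe_Rost}); moreover the realization $\varphi$ and the isomorphism $\epsilon_X$ are built compatibly with composition. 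Hence it suffices to establish the commutativity of the square separately for $i$ and for $p$ and then to paste the two squares, after twisting everything by $\{-r\}[-n]$.

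For the closed immersion $i$ this is essentially Lemma \ref{lm:comp_Gersten_loc}. That lemma already identifies, through $\varphi$ and the isomorphisms $\rho$, the motivic Gysin map $M(X)\to M(Z)(c)[2c]$ coming from the Gysin triangle with the cycle-theoretic map $i_*$ assembled from the residue $\partial^U_Z$ and the purity isomorphism $\mathfrak p_{X,Z}$. I would simply twist and shift the diagram of \ref{lm:comp_Gersten_loc} into the shape of the asserted square, using that the purity isomorphism employed in \cite{Deg6} to define the motivic $i^*$ is the $\mathfrak p_{X,Z}$ of \cite[sec.\ 2.2.5]{Deg5}; naturality of $\rho$ with respect to flat pullbacks, established in \ref{iso_Rost}, handles the residual bookkeeping.

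The main work — and the principal obstacle — is the projection $p\colon\PP^N_X\to X$. Here I would use the projective bundle decomposition $M(\PP^N_X)\simeq\bigoplus_{j=0}^N M(X)(j)[2j]$, through which the motivic Gysin morphism $p^*\colon M(X)(N)[2N]\to M(\PP^N_X)$ is the inclusion of the top Tate summand, normalized by the fundamental class of $p$. On the cycle side the corresponding map is governed by the structure of $A^*(\,\cdot\,;\hat F_*)$ as a module over the Chow ring, so I would invoke Lemma \ref{iso_Bloch&produit} together with the Bloch isomorphism $\epsilon_X^{\mathrm B}$ of \ref{iso_Bloch} to translate the projective bundle formula into a statement about $CH^*(\PP^N_X)$ and the hyperplane class.

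The delicate point, on which the whole projection case turns, is to check that $\epsilon$ matches the two normalizations of the fundamental class: that the generator of the top summand $M(X)(N)[2N]$ on the motivic side is carried to the fundamental cycle controlling the pushforward on the complexes $C^*(\,\cdot\,;\hat F_*)$. I would reduce this, via the projective bundle formula and naturality with respect to the flat pullback $p$ (compatible with $\rho$ by \ref{iso_Rost}), to the elementary computation over a function field $E$, where $\PP^N_E$ has a rational point and the assertion follows directly from the definition of the pushforward on the Gersten complex together with the compatibility of $\epsilon_X^{\mathrm B}$ with products. Once these building blocks are matched, functoriality reassembles the general projective $f$ and yields the commutativity of the square.
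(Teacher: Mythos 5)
Your proposal follows essentially the same route as the paper: both reduce, via the factorization underlying the definition of the Gysin morphism in \cite{Deg6}, to the case of a closed immersion (settled by Lemma \ref{lm:comp_Gersten_loc}) and the case of a projective bundle projection (settled by the projective bundle theorem combined with the $CH^*$-module structure of Lemma \ref{iso_Bloch&produit} and \ref{iso_Bloch}). The only divergence is the final normalization in the projection case, where the paper invokes the projection formula $p_*(p^*(x_i).c^i)=x_i.p_*(c^i)$ for Chow groups with coefficients (\emph{cf.} \cite[5.9]{Deg4}) while you propose an equivalent reduction to a rational point over the function field; both are valid.
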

\begin{proof}
Dans cette preuve, on utilisera particulièrement le lemme suivant~:
\begin{lm}
Soit $X$ un schéma lisse et $E/X$ un fibré vectoriel de rang $n$.
Soit $p:P \rightarrow X$ le fibré projectif associé, et
 $\lambda$ le fibré inversible canonique sur $P$ tel que
  $\lambda \subset p^{-1}(E)$.
On note $c=c_1(\lambda) \in CH^1(X)$ la première classe de Chern
de $\lambda$.

Alors, le morphisme suivant est un isomorphisme~:
$$
\begin{array}{rcl}
\bigoplus_{i=0}^n A^*(X;\hat F_*) & \rightarrow & A^*(P;\hat F_*) \\
x_i & \mapsto & p^*(x_i).c^i.
\end{array}
$$
en utilisant la structure de $CH^*(X)$-module (ici notée à droite) 
de $A^*(X;\hat F_*)$ rappelée en \ref{iso_Bloch}.
\end{lm}
Pour obtenir ce lemme,
il suffit d'appliquer le théorème du fibré projectif dans $\DMgm$
(\emph{cf.} \cite[2.5.1]{V1}) et de regarder son image par $\varphi$
compte tenu du lemme \ref{iso_Bloch&produit}.

Soit $E$ un fibré vectoriel sur $X$
 et $P$ sa complétion projective.
On déduit de ce lemme le cas où $f=p$.
En effet, d'après la formule de projection
$$
p_*(p^*(x_i).c^i)=x_i.p_*(c^i)
$$
pour les groupes de Chow à coefficients (\emph{cf.} \cite[5.9]{Deg4}), 
 on déduit que $p_*$ 
est la projection évidente à travers
le théorème du fibré projectif.
L'analogue de ce calcul pour $\varphi(p^*)$
résulte des définitions de \cite{Deg6}.

Compte tenu de la définition du morphisme de Gysin et du cas
que l'on vient de traiter, nous sommes ramenés au cas où $f=i:Z \rightarrow X$
est une immersion fermée,
 que l'on peut supposer de codimension pure égale à $c$.
Ce cas est alors une conséquence directe du lemme
 \ref{lm:comp_Gersten_loc}.
%
\end{proof}

\part{Motifs mixtes triangulés}
\label{part:motifs}
\section{Cas effectif}

\subsection{Complexes de faisceaux avec transferts}

\num \label{cat_modele_ftr}
On note $\Comp(\ftr)$ la catégorie des complexes 
(non nécessairement bornés)
de faisceaux avec transferts.

Pour tout faisceau avec transferts $F$ et tout entier $n \in \ZZ$,
on note $D^nF$ le complexe concentré en degrés $n$ et $n+1$ dont la
seule diférentielle non nulle est l'identité de $F$.
On note $S^nF$ le complexe concentré en degré $n$ égal à $F$.
On définit la classe des \emph{cofibrations} comme la plus petite classe
stable par pushout, composition transfinie et rétracte contenant
les morphismes de complexes évidents 
$S^{n+1} \rep X \rightarrow D^n \rep X$.

On dit qu'un complexe de faisceau avec transferts $C$ est 
\emph{$\nis$-local}
si pour tout schéma lisse $X$ et tout entier $n \in \ZZ$, le morphisme
canonique
$$
H^n\big( C(X) \big) \rightarrow H^n_\nis(X,C)
$$
est un isomorphisme. On dit qu'un morphisme $\phi:C \rightarrow D$ de 
$\Comp(\ftr)$ est une fibration si c'est un épimorphisme dans la catégorie
$\Comp(\ftr)$ et son noyau est $\nis$-local.
\begin{prop} \label{prop:modele_c_ftr}
La catégorie $\Comp(\ftr)$ avec pour équivalences faibles
les quasi-isomorphismes, munie des cofibrations et des fibrations
définies ci-dessus, est une catégorie de modèles.
\end{prop}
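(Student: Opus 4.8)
The plan is to obtain this structure as an instance of the general construction of the \emph{descent model structure} on complexes over a Grothendieck abelian category equipped with a descent structure, developed by Cisinski and the author in \cite{CD1}. Here the abelian category is $\ftr$, which is Grothendieck with the generating family of representables $\rep X$, $X \in \sm$ (\emph{cf.} \cite[4.2.8]{Deg7}). I would take $\mathcal G = \{\rep X\}_{X \in \sm}$ as the set of generators and let $\mathcal H$ be the set of \og Brown--Gersten \fg{} monomorphisms attached to the elementary Nisnevich distinguished squares: to such a square with $W = U \times_X V$ one associates the Mayer--Vietoris sequence $0 \to \rep W \to \rep U \oplus \rep V \to \rep X \to 0$, and $\mathcal H$ records the corresponding monomorphisms. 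By construction the generating cofibrations attached to $(\mathcal G,\mathcal H)$ are exactly the maps $S^{n+1}\rep X \to D^n\rep X$, so their saturation under pushout, transfinite composition and retract is precisely the class of cofibrations in the statement.

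The heart of the proof is to check that $(\mathcal G,\mathcal H)$ is a descent structure and that its \emph{$\mathcal H$-local} complexes coincide with the $\nis$-local ones. That $(\mathcal G,\mathcal H)$ is a descent structure is mostly formal: $\mathcal G$ generates, and each element of $\mathcal H$ is a monomorphism between finite sums of representables, so that the compatibility and cellularity axioms reduce to bookkeeping on the Mayer--Vietoris data. The substantive point is the identification of local complexes: a complex $C$ satisfies descent against every square in $\mathcal H$ if and only if the comparison $H^n(C(X)) \to H^n_\nis(X,C)$ is an isomorphism for all $X$ and all $n$. This is exactly the Brown--Gersten descent criterion for the Nisnevich topology, already used for the property of \cite[1.1.9]{CD2}; I would invoke it to pass from square-wise Mayer--Vietoris descent to full Nisnevich cohomological descent, the decisive input being that the Nisnevich topology is generated by elementary distinguished squares and has good cohomological dimension.

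Granting this, the general theorem of \cite{CD1} produces a proper, cofibrantly generated model structure on $\Comp(\ftr)$ whose weak equivalences are the quasi-isomorphisms, whose cofibrations are generated by $\{S^{n+1}\rep X \to D^n\rep X\}$, and whose fibrant objects are the $\mathcal H$-local, that is $\nis$-local, complexes. It then remains only to match the fibrations of this structure with the \og epimorphisms with $\nis$-local kernel \fg{} of the statement. Here I would use the abelian nature of the structure: the trivial cofibrations are the monomorphisms whose cokernel is cofibrant and acyclic, so that a morphism has the right lifting property against them precisely when it is an epimorphism with local kernel (a kernel of a fibration is again fibrant, being a pullback of a fibration, while every epimorphism with local kernel lifts against such monomorphisms). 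Correspondingly the trivial fibrations, namely the maps with the right lifting property against the generating cofibrations, are the sectionwise surjective quasi-isomorphisms. The main obstacle is the verification of the descent-structure axioms together with the Brown--Gersten identification of $\mathcal H$-local with $\nis$-local; once these are established, the remaining model-category axioms — factorizations by the small object argument, the lifting axioms, and the two-out-of-three and retract stability of quasi-isomorphisms — follow formally from the general machinery.
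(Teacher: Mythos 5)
Your overall route is the same as the paper's: the proposition is deduced there from the general existence theorem for descent model structures of \cite{CD1} (th\'eor\`eme 1.7), applied to a specific descent structure on $\ftr$ (exemple 1.6 of \emph{loc.\ cit.}); you invoke the same machinery, merely replacing the hypercover-type family by the Mayer--Vietoris complexes of elementary Nisnevich squares. The gap lies in your claim that checking the descent-structure axioms for this family is \emph{mostly formal bookkeeping}. In the formalism of \cite{CD1}, the family $\mathcal H$ must consist of \emph{acyclic} complexes whose terms are sums of generators: this acyclicity is precisely what guarantees that the resulting model structure has the quasi-isomorphisms, and not a strictly larger class, as weak equivalences (if some $H\in\mathcal H$ were not acyclic, a K-injective resolution $H \to I$ would be both a quasi-isomorphism and, by $\mathcal H$-locality of the fibrant object $I$, zero in $K(\ftr)$, forcing $H$ acyclic after all). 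For your $\mathcal H$ this means that for every elementary distinguished square the sequence
$$
0 \rightarrow \rep W \rightarrow \rep U \oplus \rep V \rightarrow \rep X \rightarrow 0
$$
must be exact as a sequence of Nisnevich sheaves \emph{with transfers}.

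That exactness is not formal. Already the surjectivity of $\rep U \oplus \rep V \rightarrow \rep X$ as a morphism of Nisnevich sheaves says that every finite correspondence with target $X$ lifts, Nisnevich-locally on the source, through the covering $U \sqcup V \rightarrow X$; the proof requires Voevodsky's analysis of finite schemes over henselian local rings, and it is exactly the transfers-specific input which the paper singles out as the essential point, in the equivalent form $\Hom_{D(\ftr)}(\rep X,C[n])=H^n_\nis(X,C)$ (\emph{cf.}\ \cite[4.2.9]{Deg7}). If this input is omitted, applying the machinery of \cite{CD1} to your pair $(\mathcal G,\mathcal H)$ would not prove the statement: forcing non-acyclic complexes of $\mathcal H$ to become local amounts to a left Bousfield localization, whose weak equivalences strictly contain the quasi-isomorphisms, so the proposition as stated would fail for the structure you construct. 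The remainder of your argument is sound and would go through once this point is supplied: by the Yoneda lemma, $\mathcal H$-locality of $C$ is the homotopy-cartesianness of the squares formed by $C(X)$, $C(U)$, $C(V)$, $C(W)$; its identification with the $\nis$-local condition of the statement is the Brown--Gersten criterion for the Nisnevich cd-structure, which involves no transfers; and your identification of the fibrations agrees with the one provided by \cite{CD1}. The repair is thus to replace \emph{bookkeeping} by a proof or citation of the Mayer--Vietoris exactness in $\ftr$ --- as written, the only genuinely non-formal ingredient of the proposition has been assumed away.
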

Cela résulte du théorème 1.7 de \cite{CD1}, compte tenu de l'exemple 1.6.
Rappelons qu'un point essentiel dans cette proposition est le résultat
suivant (voir \cite[4.2.9]{Deg7}) dû à Voevodsky~:
\begin{prop}
Pour tout complexe de faisceaux avec transferts $C$, tout schéma lisse $X$
et tout entier $n \in \ZZ$,
$$
\Hom_{D(\ftr)}(\rep X,C[n])=H^n_\nis(X,C).
$$
\end{prop}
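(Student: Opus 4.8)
Le plan est d'exploiter la structure de mod\`eles de la proposition \ref{prop:modele_c_ftr}, dont les objets fibrants sont exactement les complexes $\nis$-locaux (un complexe $C$ est fibrant si et seulement si $C \rightarrow 0$ est une fibration, \emph{i.e.} $C$ est $\nis$-local). Je pars de la propri\'et\'e caract\'erisant la famille g\'en\'eratrice: pour tout faisceau avec transferts $F$, on a $\Hom_{\ftr}(\rep X,F)=F(X)$. Comme $\rep X$ est concentr\'e en degr\'e $0$, le complexe de morphismes $\Hom^\bullet_{\ftr}(\rep X,C)$ co\"incide alors terme \`a terme avec le complexe des sections $C(X)$.

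Je v\'erifie ensuite que $\rep X$ est cofibrant. En poussant la cofibration g\'en\'eratrice $S^{n+1}\rep X \rightarrow D^n\rep X$ le long de l'unique fl\`eche $S^{n+1}\rep X \rightarrow 0$, on obtient que $0 \rightarrow S^n\rep X$ est une cofibration (pushout d'une cofibration g\'en\'eratrice); en particulier tous les d\'ecal\'es de $\rep X$ sont cofibrants. Je choisis alors un remplacement fibrant $C \rightarrow C'$, \emph{i.e.} un quasi-isomorphisme vers un complexe $\nis$-local $C'$.

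La source $\rep X$ \'etant cofibrante et le but $C'$ fibrant, les morphismes dans $D(\ftr)$ se calculent comme classes d'homotopie de cha\^ine de morphismes de complexes, d'o\`u
$$
\Hom_{D(\ftr)}(\rep X,C[n])=\Hom_{D(\ftr)}(\rep X,C'[n])
 =H^n\Hom^\bullet_{\ftr}(\rep X,C')=H^n\big(C'(X)\big).
$$
La $\nis$-localit\'e de $C'$ fournit $H^n(C'(X))=H^n_\nis(X,C')$, et l'invariance de l'hypercohomologie Nisnevich par quasi-isomorphisme donne $H^n_\nis(X,C')=H^n_\nis(X,C)$, ce qui conclut.

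Le point d\'elicat n'est pas cette cha\^ine d'identifications, qui est enti\`erement formelle une fois la structure de mod\`eles acquise, mais bien la proposition \ref{prop:modele_c_ftr} elle-m\^eme: son existence garantit que l'on peut remplacer $C$ par un complexe $\nis$-local tout en gardant le contr\^ole sur les morphismes issus des repr\'esentables. Autrement dit, toute la difficult\'e de l'\'enonc\'e est encapsul\'ee dans le bon comportement de la cohomologie Nisnevich des faisceaux avec transferts (et en particulier dans le fait que la fibrancy \`a la source co\"incide avec la $\nis$-localit\'e), qui repose sur l'invariance par homotopie et les propri\'et\'es de descente \'etablies dans \cite{Deg7}.
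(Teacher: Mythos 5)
Your argument is circular: it deduces the statement from Proposition \ref{prop:modele_c_ftr}, whereas in the paper the logical dependency runs the other way. The paper says so explicitly: the statement is recalled as \og un point essentiel dans cette proposition \fg, i.e.\ as the essential \emph{input} needed to establish the model structure. Concretely, Proposition \ref{prop:modele_c_ftr} is obtained by applying the general theorem \cite[th. 1.7]{CD1} to a \emph{descent structure} on $\ftr$, and the verification that the representable sheaves $\rep X$ together with the Nisnevich data form such a descent structure (example 1.6 of \emph{loc.\ cit.}) is precisely what Voevodsky's theorem provides. Every nontrivial ingredient you invoke --- the existence of a fibrant replacement $C \rightarrow C'$ with $C'$ $\nis$-local and $C \rightarrow C'$ a quasi-isomorphism (the factorization axiom), and the fact that chain homotopy classes from the cofibrant object $\rep X$ to the fibrant object $C'$ compute $\Hom_{D(\ftr)}$ --- encodes exactly the compatibility between $\nis$-locality (a condition defined \emph{after forgetting the transfers}) and quasi-isomorphisms of complexes of sheaves \emph{with} transfers. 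That compatibility is the entire content of the proposition; it cannot be extracted for free from a model structure whose existence rests on it.

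What a genuine proof must address, and what your proposal never touches, is the following question: why do the injective objects of the Grothendieck abelian category $\ftr$ compute Nisnevich hypercohomology, i.e.\ why is an injective sheaf with transfers acyclic for the Nisnevich topology once the transfers are forgotten? This is Voevodsky's theorem, which the paper simply cites (\cite[4.2.9]{Deg7}); its proof is not formal: it relies on the exactness in $\ftr$ of the {\v C}ech complex $\cdots \rightarrow \rep{U \times_X U} \rightarrow \rep U \rightarrow \rep X \rightarrow 0$ associated with a Nisnevich covering $U \rightarrow X$, a geometric statement about finite correspondences. Your closing paragraph identifies correctly where the difficulty sits, but then assumes it instead of addressing it: the difficulty of Proposition \ref{prop:modele_c_ftr} is encapsulated in the present statement, not the reverse. (Your formal chain of identifications --- cofibrancy of $\rep X$, fibrant replacement, invariance of both sides under quasi-isomorphism --- is correct as such; the gap is purely the circularity.)
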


\rem Par définition, un complexe de faisceaux avec transferts $C$
est fibrant (\emph{i.e.} $\nis$-local) si il est fibrant au sens de 
\cite[1.1.5]{CD2}
en tant que complexe de faisceaux Nisnevich (ayant oublié les transferts).
Il résulte donc de \emph{op. cit.} prop. 1.1.10 que cette propriété 
est équivalente à la suivante~:
\begin{enumerate}
\item[(BG)] Pour tout carré cartésien 
$\xymatrix@=10pt{W\ar[r]\ar[d] & V\ar^f[d] \\ U\ar^j[r] & X}$
tel que $j$ est une immersion ouverte, $f$ est étale et $f^{-1}(X-U) \rightarrow X-U$
est un isomorphisme topologique, le carré
$$\xymatrix@=10pt{C(X)\ar^{f^*}[r]\ar_{j^*}[d] & C(V)\ar[d] \\ C(U)\ar[r] & C(W)}$$
est homotopiquement cartésien.
\end{enumerate}

\num La catégorie $\Comp(\ftr)$ est symétrique monoïdale.
Le produit tensoriel de deux complexes $C$ et $D$ est donné par la formule
\begin{align*}
(C \otr D)^n=\oplus_{p+q=n} C^p \otr D^q \\
d(x \otr y)=dx \otr y + (-1)^{\deg(x)} x \otr dy.
\end{align*}
L'isomorphisme de symétrie est égal en degré $n$ à la somme des morphismes
$$
C^p \otr D^q \xrightarrow{(-1)^{pq}.\epsilon} D^q \otr C^p
$$
où $\epsilon$ est l'isomorphisme de symétrie du produit tensoriel des faisceaux
avec transferts.

Ce produit tensoriel est exact à droite. Comme la catégorie $\Comp(\ftr)$ est 
abélienne de Grothendieck, on en déduit l'existence formelle du foncteur
Hom interne $\uHom$ adjoint à droite de $\otr$. Explicitement, ce foncteur
est donné par la formule:
$$
\uHom(C,D)=\mathrm{Tot}^\pi \uHom_{\ftr}(C,D)
$$
où $\mathrm{Tot}^\pi$ désigne le complexe total produit d'un bicomplexe
de faisceaux avec transferts.
\begin{prop}
La catégorie $\Comp(\ftr)$ munie des structures introduites ci-dessus 
 est une catégorie de modèles symétrique monoïdale.
\end{prop}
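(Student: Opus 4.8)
Le plan est d'appliquer le crit\`ere usuel (\emph{cf.} \cite{CD1}) caract\'erisant les cat\'egories de mod\`eles mono\"{\i}dales sym\'etriques engendr\'ees par cofibrations. La cat\'egorie $\Comp(\ftr)$ est d\'ej\`a mono\"{\i}dale sym\'etrique ferm\'ee, de produit $\otr$ et de Hom interne $\uHom$, et la structure de mod\`ele de \ref{prop:modele_c_ftr} est engendr\'ee par cofibrations, de cofibrations g\'en\'eratrices $I=\{S^{n+1}\rep X \rightarrow D^n\rep X\}$ et d'une classe $J$ de cofibrations triviales g\'en\'eratrices. Il suffit donc d'\'etablir trois points~: (a) le produit $f \square g$ de deux cofibrations g\'en\'eratrices est une cofibration~; (b) si $f\in I$ et $g\in J$, alors $f\square g$ est une cofibration triviale~; (c) l'axiome de l'unit\'e.

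Pour (a), je calculerais d'abord $f\square g$ explicitement, en utilisant que $\otr$ commute aux colimites et v\'erifie $\rep X \otr \rep Y=\rep{X\times Y}$. Un calcul direct sur les complexes $D^\bullet$ et $S^\bullet$ donne $S^{p+1}\rep X \otr D^q\rep Y=D^{p+q+1}\rep{X\times Y}$, de m\^eme pour $D^p\rep X\otr S^{q+1}\rep Y$, tandis que $S^{p+1}\rep X\otr S^{q+1}\rep Y=S^{p+q+2}\rep{X\times Y}$. Le morphisme $(S^{p+1}\rep X\rightarrow D^p\rep X)\square(S^{q+1}\rep Y\rightarrow D^q\rep Y)$ est alors un monomorphisme scind\'e en chaque degr\'e, \`a valeurs dans le complexe contractile $D^p\rep X\otr D^q\rep Y$, de conoyau $S^{p+q}\rep{X\times Y}$. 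Comme ce conoyau est un faisceau repr\'esentable concentr\'e en un seul degr\'e, donc cofibrant, un tel morphisme est une cofibration d'apr\`es la description des cofibrations dans \cite{CD1}.

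L'axiome de l'unit\'e (c) sera imm\'ediat~: l'objet neutre est le complexe $S^0\rep{\spec k}$, concentr\'e en degr\'e $0$ et \'egal au faisceau repr\'esentable $\rep{\spec k}$, lequel est cofibrant~; aucune r\'esolution cofibrante de l'unit\'e n'est donc n\'ecessaire. L'obstacle principal sera le point (b). D'une part, $f\square g$ est une cofibration, par saturation du produit $\square$ \`a partir du cas (a), puisque $g$, \'etant une cofibration triviale, est en particulier une cofibration. D'autre part --- et c'est le c\oe{}ur de la preuve --- il faut montrer que $f\square g$ est une \'equivalence faible, autrement dit que le foncteur $-\otr D^p\rep X$ (et ses variantes tordues) pr\'eserve les \'equivalences faibles Nisnevich-locales.

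Je ram\`enerais ce dernier point \`a la compatibilit\'e de la donn\'ee de descente d\'efinissant la structure de mod\`ele (l'exemple 1.6 de \cite{CD1}, \emph{cf.} la propri\'et\'e (BG) ci-dessus) avec le produit tensoriel~: les g\'en\'erateurs sont stables par $\otr$ gr\^ace \`a la formule $\rep X\otr\rep Y=\rep{X\times Y}$, et le produit tensoriel d'un complexe de descente Nisnevich par un faisceau repr\'esentable $\rep X$ reste un complexe de descente, puisqu'un carr\'e distingu\'e Nisnevich le demeure apr\`es multiplication par $X$. La donn\'ee de descente est ainsi mono\"{\i}dale au sens de \cite{CD1}, et le th\'eor\`eme de mono\"{\i}dalit\'e de \emph{loc. cit.} permet de conclure. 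La difficult\'e essentielle r\'esidera donc dans cette stabilit\'e de la localit\'e Nisnevich par produit tensoriel~; les points (a) et (c) sont de nature purement formelle.
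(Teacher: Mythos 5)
Votre d\'emonstration est correcte et suit essentiellement la m\^eme voie que l'article, dont la preuve consiste pr\'ecis\'ement \`a citer la proposition 2.3 de \cite{CD1} compte tenu de l'exemple 2.4 : c'est le crit\`ere que vous invoquez en dernier ressort, la platitude de la structure de descente provenant de la formule $\rep X \otr \rep Y = \rep{X \times Y}$ et de la stabilit\'e de la descente Nisnevich par produit avec un sch\'ema lisse. Une seule impr\'ecision, que votre dernier paragraphe corrige de lui-m\^eme : dans la r\'eduction du point (b), le foncteur pertinent est $\rep X \otr (-)$ et non $D^p\rep X \otr (-)$ --- ce dernier envoie tout complexe sur un complexe contractile, de sorte que \og pr\'eserver les \'equivalences faibles \fg~est pour lui un \'enonc\'e vide ; il faut en r\'ealit\'e que $\rep X \otr (-)$ pr\'eserve l'acyclicit\'e des conoyaux de cofibrations triviales (les complexes cofibrants acycliques), ce qui est exactement la platitude fournie par l'exemple 2.4 de \cite{CD1}.
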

Cela résulte de la proposition 2.3 de \cite{CD1}, 
compte tenu de l'exemple 2.4.
On obtient donc un produit tensoriel dérivé~: soit $C$ (resp. $D$)
un complexe de faisceaux avec transferts et $C' \rightarrow C$ 
(resp. $D' \rightarrow D$) une résolution cofibrante.
Alors, $C \otrL D=C' \otr D'$.
Ainsi, puisque pour tout schéma lisse $X$ le faisceau $\rep X$ placé
en degré $0$ est cofibrant, $\rep X \otrL \rep Y=\rep{X \times Y}$. \\
On peut définir aussi le Hom interne dérivé, en considérant
de plus une résolution fibrante $D \rightarrow D''$~:
$$
\RHom(C,D)=\uHom(C',D'').
$$

\subsection{Complexes motiviques}

\num On dit qu'un complexe de faisceaux avec transferts $C$ est 
\emph{$\AA^1$-local} si pour tout schéma lisse $X$
 et tout entier $n \in \ZZ$,
le morphisme induit par la projection canonique
$$
H^n_\nis(X,C) \rightarrow H^n_\nis(\AA^1_X,C)
$$
est un isomorphisme.

On dit qu'un morphisme $f:C \rightarrow D$ de $\Comp(\ftr)$
est une \emph{$\AA^1$-équivalence} si pour tout complexe
$\AA^1$-local $E$, le morphisme induit
$$
\Hom_{\Der(\ftr)}(D,E) \rightarrow \Hom_{\Der(\ftr)}(C,E)
$$
est un isomorphisme.
On dit aussi que $f$ est une \emph{$\AA^1$-fibration}
si c'est un épimorphisme de complexe et son noyau
est à la fois $\nis$-local et $\AA^1$-local.
\begin{prop} \label{A^1-eff-modele}
\begin{enumerate}
\item La catégorie $\Comp(\ftr)$ avec pour équivalences faibles
les $\AA^1$-équivalences, pour fibrations les $\AA^1$-fibrations
et pour cofibrations celles définies en \ref{cat_modele_ftr}
est une catégorie de modèles symétrique monoïdale.
\item La catégorie homotopique associée s'identifie à 
la localisation de la catégorie triangulée $\Der(\ftr)$ par
rapport à la catégorie localisante engendrée par les morphismes
$\rep {\AA^1_X} \rightarrow \rep X$.
\item Cette catégorie homotopique s'identifie encore à la sous-catégorie
pleine de $\Der(\ftr)$ formée des complexes $\AA^1$-locaux.
\end{enumerate}
\end{prop}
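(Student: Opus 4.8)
The plan is to realize the structure in the statement as the \emph{left Bousfield localization} of the Nisnevich-local model structure of Proposition~\ref{prop:modele_c_ftr} with respect to the set of maps
$$
S=\{\,\rep{\AA^1_X}\rightarrow\rep X \mid X\in\sm\,\}.
$$
First I would match the two definitions supplied in the statement with the abstract notions produced by this localization. A complex is $S$-local precisely when it is $\AA^1$-local: the defining condition is phrased through $H^n_\nis$, so it already incorporates Nisnevich-locality, i.e.\ fibrancy for the structure of~\ref{prop:modele_c_ftr}. Likewise, a map $f\colon C\to D$ is an $S$-local equivalence exactly when $\Hom_{\Der(\ftr)}(D,E)\to\Hom_{\Der(\ftr)}(C,E)$ is bijective for every $\AA^1$-local $E$, which is the definition of an $\AA^1$-équivalence. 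Since $\Comp(\ftr)$ is combinatorial and the Nisnevich structure is left proper — both granted by the machinery of \cite{CD1} — the localization $L_S\Comp(\ftr)$ exists, retains the cofibrations of~\ref{cat_modele_ftr}, has the $\AA^1$-équivalences as weak equivalences and the $\AA^1$-local complexes as fibrant objects. This settles the model-category part of~(1).

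For the monoidal statement in~(1), I would use the same principle as in the construction of $\ohtr$ on $\hftr$: it is enough to check that $S$ is stable, up to $\AA^1$-équivalence, under tensoring with the generators $\rep Y$. This is immediate, since
$$
\rep{\AA^1_X}\otr\rep Y=\rep{\AA^1_{X\times Y}}\longrightarrow\rep{X\times Y}=\rep X\otr\rep Y
$$
is again a generating map of $S$. Combined with the fact that each $\rep X$, placed in degree $0$, is cofibrant, this is exactly the compatibility needed for a monoidal left Bousfield localization, so the symmetric monoidal model structure of the previous proposition descends to $L_S\Comp(\ftr)$.

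Assertions~(2) and~(3) are then the two standard descriptions of the homotopy category of a left Bousfield localization, which I would simply record. On one hand, $\mathrm{Ho}(L_S\Comp(\ftr))$ is the localization of $\mathrm{Ho}(\Comp(\ftr))=\Der(\ftr)$ at the image of $S$; as $\Der(\ftr)$ is triangulated, this localization is the Verdier quotient by the localizing subcategory generated by the cones of the maps $\rep{\AA^1_X}\to\rep X$, which is~(2). On the other hand, the full subcategory of $S$-local — that is, $\AA^1$-local — objects is reflective and is identified with the homotopy category by fibrant replacement, giving~(3).

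The one delicate point is the monoidal compatibility: one must be sure that passing to the localization preserves the pushout-product and unit axioms rather than merely the underlying model structure. The decisive input is the stability computation above, which guarantees that the derived tensor product descends to $L_S\Comp(\ftr)$; everything else is a transcription of the general localization theory of \cite{CD1} applied to Proposition~\ref{prop:modele_c_ftr}. I therefore expect this compatibility to be the only step demanding genuine verification.
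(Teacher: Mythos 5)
Your proposal is correct and takes essentially the same approach as the paper: the published proof consists of invoking \cite[3.5]{CD1} (with Example 3.15 of \emph{loc.\ cit.}), which packages precisely the left Bousfield localization of the model structure of \ref{prop:modele_c_ftr} that you construct by hand. The points you verify explicitly — the identification of $S$-local objects and $S$-local equivalences with the $\AA^1$-local complexes and $\AA^1$-équivalences, and the stability of $S$ under tensoring with the generators $\rep Y$ needed for the monoidal axioms — are exactly the content encapsulated by that citation.
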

Cela résulte de \cite[3.5]{CD1} compte tenu de l'exemple 3.15.
Rappelons que la catégorie de modèles de cette proposition est la localisation
de Bousfield à gauche de la catégorie de modèles de \ref{prop:modele_c_ftr}.
On l'appelle dans la suite la catégorie de modèles $\AA^1$-locale
sur $\Comp(\ftr)$.

\begin{df}
On note $\DMe$ la sous-catégorie pleine de $\Der(\ftr)$ formée
des complexes $\AA^1$-locaux.
\end{df}
On en déduit donc une adjonction de catégories triangulées:
\begin{equation} \label{eq:adj_D_DM}
L_{\AA^1}:\Der(\ftr) \leftrightarrows \DMe:\cO
\end{equation}
avec $\cO$ le foncteur d'inclusion.
La catégorie $\DMe$ est triangulée monoïdale symétrique fermée
et le foncteur $L_{\AA^1}$ est monoïdal. On notera simplement
$\otimes$ le produit tensoriel de $\DMe$ (il s'agit
du produit tensoriel dérivé de $\otr$ pour la structure de
catégorie de modèles $\AA^1$-locale sur $\Comp(\ftr)$).
On note aussi $\un$ l'objet unité de la catégorie monoïdale
$\DMe$\footnote{Voevodsky le note $\mathbb Z$ dans \cite{V1}.}.
Pour tout schéma lisse $X$, on pose $M(X)=L_{\AA^1}(\rep X)$.

\num Si $C$ est un complexe de faisceaux avec transferts,
 et $n$ un entier, on note $\upH^n(C)$ le préfaisceau sur $\smc$
 qui à $X$ associe $H^n(C(X))$.
On note $\uH^n(C)$ le faisceau avec transferts associé à $\upH^n(C)$
 (\emph{cf.} \cite[4.2.7]{Deg7})\footnote{Si l'on oublie les transferts,
  $\uH^n(C)$ est le $n$-ième faisceau de cohomologie de $C$.}.
Dans notre contexte, le théorème suivant donne une reformulation
 des résultats principaux de Voevodsky concernant les complexes
 motiviques (\emph{cf.} \cite{V1}):
\begin{thm} \label{thm:Voevodsky}
Soit $C$ un complexe de faisceaux avec transferts. Les conditions
suivantes sont équivalentes~:
\begin{enumerate}
\item[(i)] $C$ est $\AA^1$-local.
\item[(ii)] Pour tout entier $n \in \ZZ$, $\uH^n(C)$ est $\AA^1$-local.
\item[(iii)] Pour tout entier $n \in \ZZ$, $\uH^n(C)$ est invariant par homotopie.
\end{enumerate}
\end{thm}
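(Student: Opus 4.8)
Le plan est de d\'emontrer les implications (ii) $\Leftrightarrow$ (iii), (iii) $\Rightarrow$ (i) et (i) $\Rightarrow$ (iii), l'outil central \'etant la suite spectrale d'hypercohomologie Nisnevich
$$
E_2^{p,q}(X)=H^p_\nis(X,\uH^q(C)) \Rightarrow H^{p+q}_\nis(X,C),
$$
fonctorielle en $X$. Comme $k$ est parfait, tout sch\'ema lisse de type fini est de dimension de Krull finie, donc de dimension cohomologique Nisnevich finie ; cette suite spectrale n'a qu'un nombre fini de colonnes non nulles, et converge donc fortement m\^eme pour un complexe $C$ non born\'e --- c'est pr\'ecis\'ement ce qui rend licite le cadre non born\'e de \cite{CD1}.

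L'\'equivalence (ii) $\Leftrightarrow$ (iii) se ram\`ene \`a une propri\'et\'e du faisceau unique $F=\uH^q(C)$. L'implication (ii) $\Rightarrow$ (iii) est triviale : si $F$ est $\AA^1$-local, le cas $n=0$ donne en particulier $F(X)=H^0_\nis(X,F)\xrightarrow{\sim} H^0_\nis(\AA^1_X,F)=F(\AA^1_X)$, donc $F$ est invariant par homotopie. La r\'eciproque (iii) $\Rightarrow$ (ii) est exactement le th\'eor\`eme d'invariance homotopique stricte de Voevodsky (\cite[4.5.1]{Deg7}) : pour $F$ invariant par homotopie, tous les groupes $H^p_\nis(X,F)$ sont invariants par homotopie en $X$, c'est-\`a-dire que $F$, vu comme complexe concentr\'e en degr\'e $0$, est $\AA^1$-local.

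Pour (iii) $\Rightarrow$ (i), on applique la suite spectrale \`a la projection $\AA^1_X\to X$. Si tous les $\uH^q(C)$ sont invariants par homotopie, ils sont $\AA^1$-locaux d'apr\`es (iii) $\Rightarrow$ (ii), donc $E_2^{p,q}(X)\to E_2^{p,q}(\AA^1_X)$ est un isomorphisme pour tous $p,q$. Par le th\'eor\`eme de comparaison des suites spectrales convergentes, le morphisme induit sur les aboutissements $H^{p+q}_\nis(X,C)\to H^{p+q}_\nis(\AA^1_X,C)$ est un isomorphisme, ce qui signifie que $C$ est $\AA^1$-local.

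L'implication restante (i) $\Rightarrow$ (iii) est la partie d\'elicate, car un isomorphisme sur les aboutissements ne fournit \emph{pas} un isomorphisme sur le terme $E_2$ : l'argument de suite spectrale ci-dessus ne se renverse pas. On utilise ici le complexe singulier de Suslin-Voevodsky $\sing$. Le morphisme naturel $C\to\sing(C)$ est une $\AA^1$-\'equivalence, et les faisceaux de cohomologie $\uH^n(\sing(C))$ sont invariants par homotopie : pour un faisceau, c'est le th\'eor\`eme fondamental de Voevodsky (\cite{V1}), et on en d\'eduit le cas g\'en\'eral gr\^ace \`a la suite spectrale du bicomplexe associ\'e et au fait que $\hftr$ est une sous-cat\'egorie \'epaisse de $\ftr$, stable par extensions. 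Par (iii) $\Rightarrow$ (i) d\'ej\`a \'etablie, $\sing(C)$ est donc $\AA^1$-local ; si $C$ est lui aussi $\AA^1$-local, la $\AA^1$-\'equivalence $C\to\sing(C)$ relie deux objets $\AA^1$-locaux, donc, d'apr\`es la description de la cat\'egorie homotopique comme sous-cat\'egorie pleine des complexes $\AA^1$-locaux (proposition \ref{A^1-eff-modele}), c'est un isomorphisme dans $\Der(\ftr)$. On conclut $\uH^n(C)\cong\uH^n(\sing(C))$, qui est invariant par homotopie. Le principal obstacle est donc double : contr\^oler l'amplitude et la convergence des suites spectrales pour des complexes non born\'es, et r\'eduire l'invariance homotopique des faisceaux de cohomologie de $\sing(C)$ au cas d'un faisceau unique trait\'e par Voevodsky.
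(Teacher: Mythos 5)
Your decomposition agrees with the paper's on two of the three points: the equivalence (ii) $\Leftrightarrow$ (iii) (trivial direction plus Voevodsky's strict homotopy invariance theorem \cite[4.5.1]{Deg7}) and the implication (iii) $\Rightarrow$ (i) via the strongly convergent Nisnevich hypercohomology spectral sequence are exactly the arguments the paper invokes. Where you genuinely diverge is the remaining direction: the paper settles the equivalence of (i) and (ii) with a one-line appeal to the same spectral sequence, whereas you observe -- correctly -- that an isomorphism of abutments does not descend to the $E_2$-page, and you instead route (i) $\Rightarrow$ (iii) through the Suslin complex $\sing$ and the identification of $\AA^1$-local objects given by Proposition \ref{A^1-eff-modele}.

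That detour, however, has a genuine gap, located exactly at the point you flag as \emph{le principal obstacle} but do not resolve. Your justification that $\uH^n(\sing(C))$ is homotopy invariant for an arbitrary complex $C$ -- the spectral sequence of the bicomplex together with the thickness of $\hftr$ -- is only valid when $C$ is bounded above: in cohomological total degree $n$, $\sing(C)^n$ receives contributions from $\uHom(\rep{\Delta^m},C^{n+m})$ for \emph{all} $m \geq 0$, so when $C$ is unbounded above infinitely many columns contribute and the column filtration of $\Tot$ is infinite; the spectral sequence of such a half-plane bicomplex need not converge. The finiteness of the Nisnevich cohomological dimension, which correctly salvages the hypercohomology spectral sequence, is of no help here, because the divergence is in the $C$-direction, not the $X$-direction. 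The same restriction affects your first claim, that $C \to \sing(C)$ is an $\AA^1$-equivalence: the result behind \ref{complexe_Suslin}, namely \cite[3.2.3]{V1}, is proved for bounded above complexes, and extending it is part of the nontrivial content of \cite{CD1}. Both claims can be repaired by one missing idea: write $C=\ilim{k} \tau_{\leq k}C$, note that $\sing$, the functors $\uH^n$, the class of $\AA^1$-equivalences and the property of homotopy invariance all commute with (or are stable under) filtered colimits, and reduce to the bounded above case. Alternatively, (i) $\Rightarrow$ (iii) admits a direct proof with no convergence issue and no Suslin complex at all, which is presumably what the paper's laconic sentence hides: replace $C$ by a $\nis$-local model (\ref{prop:modele_c_ftr}), so that $H^n(C(X))=H^n_\nis(X,C)$; hypothesis (i) then says precisely that the presheaves $\upH^n(C)$ are homotopy invariant presheaves with transfers, and $\uH^n(C)$, being their associated Nisnevich sheaves, are homotopy invariant by Voevodsky's theorem that the sheaf associated to a homotopy invariant presheaf with transfers is again homotopy invariant.
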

\begin{proof}
L'équivalence de (i) et (ii) résulte
 de la suite spectrale d'hypercohomologie Nisnevich.
L'implication $(ii) \Rightarrow (iii)$ est évidente
 et sa réciproque résulte du théorème de Voevodsky
 \cite[4.5.1]{Deg7}. 
\end{proof}
D'un point de vue terminologique, un 
\emph{complexe de faisceaux avec transferts $\AA^1$-local}
en notre sens est donc un \emph{complexe motivique} au sens de Voevodsky 
(à condition qu'il soit borné supérieurement). 
L'intérêt de la définition que nous avons adoptée est 
qu'elle garde un sens pour des bases plus générales qu'un corps parfait 
(\emph{cf.} \cite{CD1} où la construction présentée ici est généralisée
au cas d'une base régulière).

\num \label{complexe_Suslin}
Notons finalement que Voevodsky obtient une formule explicite pour le foncteur
$L_{\AA^1}$ grâce au \emph{complexe des chaînes singulières de Suslin} $\sing$
(\emph{cf.} \cite[3.2.3]{V1}). Pour un complexe $K$ de faisceau avec transferts,
$$
\sing(K)=\uHom(\rep{\Delta^*},K)
$$
où $\Delta^*$ est le schéma cosimplicial standard et $\rep{\Delta^*}$
 et le complexe de faisceaux avec transferts associé\footnote{A priori,
 la formule ci-dessus donne donc un complexe homologique. Ce que l'on note
 $\sing(K)$ est le complexe cohomologique obtenu en inversant la graduation.}.

D'après \cite[3.2.6]{V1}, on obtient un foncteur pleinement fidèle
$$
\DMgme \rightarrow \DMe.
$$
On obtient aussi ce résultat dans \cite{CD1} (\emph{cf.} exemple 5.5) 
où l'on démontre de plus que $\DMgme$ est la sous-catégorie pleine
de $\DMe$ formée des complexes $K$ qui sont compacts 
-- \emph{i.e.} le foncteur $\Hom_{\DMe}(K,.)$ commute aux
sommes directes quelconques.

\subsection{t-structure homotopique}

La catégorie $\Der(\ftr)$ porte naturellement une t-structure,
 celle dont les tronqués négatifs sont donnés par la formule
$$
\tau_{\leq 0}(K)^n=\begin{cases}
K^n & \text{si } n<0 \\
\Ker(d) &  \text{si } n=0 \\
0 & \text{sinon.}
\end{cases}
$$
Le foncteur pleinement fidèle $\cO$ de l'adjonction \eqref{eq:adj_D_DM} 
 permet de transporter cette t-structure à $\DMe$.
Un complexe $\AA^1$-local $K$ est dit \emph{positif} (resp. \emph{négatif})
si pour tout $n<0$ (resp. $n>0$), $\uH^n(K)=0$.
Notons que les foncteurs de troncation négatif (ci-dessus)
et positif respectent les complexes $\AA^1$-locaux 
d'après le théorème \ref{thm:Voevodsky}.

Suivant Voevodsky, on appelle la t-structure sur $\DMe$
ainsi définie la \emph{t-structure homotopique}.
Notons que cette t-structure est non dégénérée --
comme c'est le cas pour la t-structure canonique sur $\Der(\ftr)$.

Il est immédiat que le coeur de la t-structure homotopique
est équivalent à la catégorie des faisceaux homotopiques $\hftr$
(introduite dans la partie précédente), ceci grâce au foncteur
\begin{equation} \label{H^0_DMe}
\uH^0:\DMe \rightarrow \hftr.
\end{equation}
Si $F$ est un faisceau avec transferts,
on obtient de plus $\uH^0(L_{\AA^1} F)=h_0(F)$
 avec les notations de la première partie,
 compte tenu de \ref{complexe_Suslin}.
De même, pour tout schéma lisse $X$, $h_0(X)=\uH^0(M(X))$.

\num Soit $T$ le faisceau avec transferts 
conoyau du morphism $\rep k \xrightarrow{s_*} \rep{\GG}$
induit par la section unité. Ainsi, $\tate=\uH^0(T)$
et on en déduit que si $F$ est un faisceau homotopique,
$F_{-1}=\uHom(T,F)$ où le Hom interne est calculé dans
la catégorie des faisceaux avec transferts.

Pour tout préfaisceau avec transferts $F$,
 on pose plus généralement $\Omega F=\uHom(T,F)$.
Si $F$ est un faisceau (resp. faisceau homotopique),
 $\Omega F$ est un faisceau (resp. faisceau homotopique).
Soit $a$ le foncteur faisceau avec transferts associé
 définit dans \cite[4.2.7]{Deg7}.
Pour tout préfaisceau avec transferts $F$,
 on en déduit un morphisme canonique
$$
Ex_F:a \circ \Omega(F) \rightarrow \Omega \circ a(F).
$$
\begin{lm}
Le morphisme $Ex_F$ est un isomorphisme.
\end{lm}
\begin{proof}
D'après \cite[4.4.14]{Deg7}, la source et le but de $Ex_F$
 sont des faisceau homotopiques. 
D'après le paragraphe \ref{foncteurs_fibres_fhtp}, 
 il suffit donc de vérifier que $Ex_F$ induit un isomorphisme 
 sur les fibres au point définit par un corps de fonctions $E$.
Par définition du foncteur $\Omega$,
 on est ramené à montrer l'égalité
 $a F(\GG \times_k (E))=F(\GG \times_k (E))$.
Mais cela résulte de \cite[4.4.10]{Deg7}
 appliqué au préfaisceau homotopique $\hat p^*F$
  pour $p:\spec E \rightarrow \spec k$
  (voir 4.2.18 et 4.2.21).
\end{proof}

Si $C$ est un complexe de faisceau avec transferts,
 on note $\Omega C$ le complexe obtenu en appliquant
 $\Omega$ en chaque degré.
\begin{cor} \label{cor_uH&Omega}
Soit $C$ un complexe $\AA^1$-local de faisceaux avec transferts.
Alors, pour tout entier $n \in \ZZ$, $\uH^n(\Omega C)=\uH^n(C)_{-1}$.
Le complexe $\Omega C$ est $\AA^1$-local.
\end{cor}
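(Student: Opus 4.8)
Le plan est de calculer $\uH^n(\Omega C)$ en deux temps~: d'abord au niveau des pr\'efaisceaux, o\`u tout est formel, puis en faisceautisant gr\^ace au lemme pr\'ec\'edent. Je partirais de la suite exacte courte scind\'ee de faisceaux avec transferts $0 \rightarrow \rep k \xrightarrow{s_*} \rep \GG \rightarrow T \rightarrow 0$, dont un scindage provient de la projection structurale $\GG \rightarrow \spec k$. En appliquant $\uHom(-,F)$ \`a un pr\'efaisceau avec transferts $F$ quelconque, j'obtiendrais une d\'ecomposition en somme directe $\uHom(\rep \GG,F)=F(\GG \times -)=F \oplus \Omega F$ identifiant $\Omega F$ au facteur noyau de la fl\`eche $F(\GG \times -) \rightarrow F$ induite par la section unit\'e.

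J'appliquerais alors cette d\'ecomposition en chaque degr\'e au complexe $C$, pour obtenir un isomorphisme de complexes de pr\'efaisceaux $C(\GG \times -)=C \oplus \Omega C$. Comme la cohomologie de pr\'efaisceaux $\upH^n$ se calcule section par section, elle respecte cette somme directe et commute \`a l'\'evaluation en $\GG \times X$~; on en d\'eduirait l'identification cruciale $\upH^n(\Omega C)=\Omega(\upH^n C)$ au niveau des pr\'efaisceaux avec transferts.

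Il resterait \`a faisceautiser. En appliquant le foncteur $a$ et l'isomorphisme $Ex$ du lemme pr\'ec\'edent au pr\'efaisceau $\upH^n(C)$, j'aurais $\uH^n(\Omega C)=a\big(\Omega \upH^n(C)\big)\cong\Omega\big(a\,\upH^n(C)\big)=\Omega(\uH^n C)$. Puisque $C$ est $\AA^1$-local, le th\'eor\`eme \ref{thm:Voevodsky} assure que $\uH^n(C)$ est invariant par homotopie, si bien que $\Omega(\uH^n C)=\uH^n(C)_{-1}$~; c'est la premi\`ere assertion. Pour la seconde, je remarquerais que chaque $\uH^n(C)_{-1}$ est encore un faisceau homotopique (\emph{cf.}~\ref{lm:-1_exact}), donc que tous les faisceaux de cohomologie de $\Omega C$ le sont, et je conclurais par l'\'equivalence (i)$\Leftrightarrow$(iii) du th\'eor\`eme \ref{thm:Voevodsky}.

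Le point d\'elicat n'est pas le calcul formel au niveau des pr\'efaisceaux, mais l'interversion entre faisceautisation et $\Omega$~: c'est pr\'ecis\'ement ce que fournit l'isomorphisme $Ex$ du lemme pr\'ec\'edent. Il faut pour cela prendre garde \`a distinguer soigneusement, tout au long de l'argument, la cohomologie de pr\'efaisceaux $\upH^n$ de son faisceau associ\'e $\uH^n$.
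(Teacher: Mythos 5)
Votre d\'emonstration est correcte et suit pour l'essentiel la m\^eme d\'emarche que celle de l'article~: identification de $\upH^n(\Omega C)$ avec $\Omega(\upH^n C)$ au niveau des pr\'efaisceaux, \'echange de $a$ et $\Omega$ via le lemme pr\'ec\'edent (l'isomorphisme $Ex$), puis th\'eor\`eme \ref{thm:Voevodsky} pour obtenir les deux assertions. Vous explicitez simplement la commutation de $\upH^n$ \`a $\Omega$ (par la scission de $0 \rightarrow \rep k \rightarrow \rep \GG \rightarrow T \rightarrow 0$), fait que l'article se contente d'invoquer.
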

\begin{proof}
La première assertion est un corollaire du lemme précédent
 compte tenu du fait que $\upH^n$ commute à $\Omega$.
  La deuxième assertion en résulte.
\end{proof}

En particulier, $\Omega$ préserve les quasi-isomorphismes entre complexes
motiviques. On en déduit donc un endofoncteur $\DMe \rightarrow \DMe$
 que l'on note encore $\Omega$.
\begin{cor} \label{-1_sur_complexes}
L'endofoncteur $\Omega$ est $t$-exact. Il induit le foncteur $?_{-1}$
 sur le coeur de $\DMe$. Pour tout complexe motivique $C$,
$$
\Omega(C)=\uHom_{\DMe}(T,C).
$$
\end{cor}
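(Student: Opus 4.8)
The plan is to deduce all three assertions from Corollary \ref{cor_uH&Omega}, combined with the exactness of $?_{-1}$ (Lemma \ref{lm:-1_exact}): the first two follow essentially formally, while the last requires a short Quillen-adjunction argument. For the $t$-exactness, recall that by Corollary \ref{cor_uH&Omega} one has $\uH^n(\Omega C)=\uH^n(C)_{-1}$ for every motivic complex $C$ and every $n\in\ZZ$. Since $?_{-1}$ is exact on $\hftr$, it sends the zero object to itself, so the vanishing of $\uH^n(C)$ forces that of $\uH^n(\Omega C)$. As the positive (resp.\ negative) part of the homotopy $t$-structure is defined by the vanishing of $\uH^n$ for $n<0$ (resp.\ $n>0$), this shows that $\Omega$ preserves both positive and negative complexes, i.e.\ that $\Omega$ is $t$-exact.

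The functor induced on the coeur then follows at once. The heart is identified with $\hftr$ via $\uH^0$, and for a homotopy sheaf $F$ seen as a complex concentrated in degree $0$, Corollary \ref{cor_uH&Omega} gives $\uH^0(\Omega F)=\uH^0(F)_{-1}=F_{-1}$. Hence the endofunctor of $\hftr$ induced by $\Omega$ is exactly $?_{-1}$.

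It remains to prove the formula $\Omega(C)=\uHom_{\DMe}(T,C)$, for which I would show that the naive functor $\Omega=\uHom(T,-)$ already computes the derived internal Hom on motivic complexes. The crucial point is that $T$ is cofibrant: the unit section $s$ admits the structural morphism $\GG\to\spec k$ as a retraction, so the defining exact sequence $0\to\rep{k}\to\rep{\GG}\to T\to 0$ splits, exhibiting $T$ as a direct summand of the cofibrant representable $\rep{\GG}$. Since tensoring with a cofibrant object is left Quillen for the symmetric monoidal $\AA^1$-local model structure of \ref{A^1-eff-modele}, the adjoint pair $(-\otr T,\ \uHom(T,-))$ is a Quillen adjunction, and therefore $\uHom_{\DMe}(T,-)=\derR\uHom(T,-)$ is computed by $\uHom(T,C^f)=\Omega(C^f)$, where $C\to C^f$ denotes an $\AA^1$-fibrant replacement.

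To conclude I would compare $\Omega(C)$ with $\Omega(C^f)$. When $C$ is $\AA^1$-local, the map $C\to C^f$ is a weak equivalence between $\AA^1$-local objects; as such it is a quasi-isomorphism, by the standard fact that in a left Bousfield localization a weak equivalence between local objects is one for the original model structure, here the Nisnevich-local one. Moreover $\Omega$ preserves quasi-isomorphisms between motivic complexes, as follows from the functorial identity $\uH^n(\Omega-)=\uH^n(-)_{-1}$ of Corollary \ref{cor_uH&Omega}. Consequently $\Omega(C)\to\Omega(C^f)=\uHom_{\DMe}(T,C)$ is an isomorphism in $\DMe$, which is the desired formula. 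The main obstacle is precisely this last step: one must check carefully that the fibrant replacement stays a quasi-isomorphism and not merely an $\AA^1$-equivalence, so that the preservation property of $\Omega$ coming from Corollary \ref{cor_uH&Omega} genuinely applies.
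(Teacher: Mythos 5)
Your proof is correct and follows the paper's (implicit) argument: the paper states this corollary without a separate proof, as an immediate consequence of Corollary \ref{cor_uH&Omega} together with the remark that $\Omega$ preserves quasi-isomorphisms between $\AA^1$-local complexes and hence induces an endofunctor of $\DMe$. Your handling of the third assertion — $T$ is cofibrant as a direct summand of $\rep{\GG}$ via the split sequence $0\to\rep{k}\to\rep{\GG}\to T\to 0$, so $\uHom(T,-)$ is right Quillen and the derived internal Hom is computed on an $\AA^1$-fibrant replacement $C^f$, which $\Omega$ cannot distinguish from $C$ because $C\to C^f$ is a quasi-isomorphism between $\AA^1$-local complexes — correctly fills in exactly the details the paper leaves tacit.
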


\num \label{twist_Tate_suspendu}
Rappelons que le motif de Tate $\un(1)$ dans la catégorie $\DMe$
est définit par: $M(\PP^1_k)=\un \oplus \un(1)[2]$.
De plus, $L_{\AA^1} T=\un(1)[1]$. Il est commode d'introduire 
la notation redondante $\un\{1\}:=\un(1)[1]$.
Pour un entier $n \geq 0$, on pose
$\un\{n\}=\un\{1\}^{\otimes,n}$ et pour un complexe motivique $C$, 
 $C\{n\}=\un\{n\} \otimes C$.
Notons que l'on peut énoncer le théorème de simplification de Voevodsky 
\cite[4.10]{V3} sous
la forme suivante:
\begin{thm}[Voevodsky]\label{simplification_DM}
Pour tout complexe motivique $C$,
 le morphisme d'ajonction 
$$
C \rightarrow \uHom_{\DMe}(\un\{1\},C\{1\})=\Omega(C\{1\})
$$
est un isomorphisme.
\end{thm}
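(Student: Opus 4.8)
The plan is to deduce the assertion from Voevodsky's cancellation theorem \cite[4.10]{V3} by a formal adjunction argument, the only non-trivial point on our side being the passage from compact generators to arbitrary (possibly unbounded) complexes. First I would record that the displayed equality is nothing but Corollary \ref{-1_sur_complexes}: since $L_{\AA^1}T=\un\{1\}$ by \ref{twist_Tate_suspendu}, the endofunctor $\Omega=\uHom_{\DMe}(T,-)$ coincides with $\uHom_{\DMe}(\un\{1\},-)$, and evaluating at $C\{1\}$ yields $\uHom_{\DMe}(\un\{1\},C\{1\})=\Omega(C\{1\})$. It therefore remains to prove that the unit $\eta_C\colon C\to\uHom_{\DMe}(\un\{1\},C\{1\})$ of the adjunction $(-\otimes\un\{1\})\dashv\uHom_{\DMe}(\un\{1\},-)$ is an isomorphism.

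The key reduction is the observation that $\eta_C$ is an isomorphism for every $C$ if and only if the endofunctor $-\{1\}=-\otimes\un\{1\}$ of $\DMe$ is fully faithful. Indeed, for any object $A$ the map $\Hom_{\DMe}(A,\eta_C)$ identifies, after adjunction, with the map $\Hom_{\DMe}(A,C)\to\Hom_{\DMe}(A\{1\},C\{1\})$ induced by the functor $-\{1\}$; as the motives $M(X)$, for $X$ smooth, together with their shifts form a family of compact generators of $\DMe$, Yoneda converts the bijectivity of all these maps into the statement that $\eta_C$ is an isomorphism. Now $-\{1\}$ is the composite of $-\otimes\un(1)$ with the shift $[1]$, and a shift is an autoequivalence; hence $-\{1\}$ is fully faithful precisely when $-\otimes\un(1)$ is, which is exactly Voevodsky's cancellation theorem \cite{V3}.

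The remaining issue is that \emph{loc.\ cit.} proves cancellation for geometric, hence compact, motives, whereas here $C$ is an arbitrary motivic complex. To bridge this I would check that the full subcategory of those $C$ for which $\eta_C$ is an isomorphism is localising: it is evidently stable under shifts and cones, and it is stable under arbitrary direct sums since $-\{1\}$ commutes with sums trivially while $\Omega=\uHom_{\DMe}(\un\{1\},-)$ does so because $\un\{1\}$ is compact (being, up to shift, a direct factor of $M(\PP^1_k)$, \emph{cf.} \ref{twist_Tate_suspendu}), so that internal Hom out of it preserves direct sums. This localising subcategory contains the compact generators $M(X)$ by Voevodsky's theorem, whence it is all of $\DMe$. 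The genuine content thus lies entirely in \cite{V3}; note in particular that the sheaf-level statement \ref{cancellation} cannot serve as a shortcut here, for $-\{1\}$ is not $t$-exact and one cannot reduce the isomorphism to the heart of the homotopy $t$-structure.
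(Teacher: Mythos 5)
Your argument is correct, and it is worth situating it against the paper, which gives no proof at all: the theorem is introduced there by the remark that one can \emph{state} Voevodsky's cancellation theorem \cite[4.10]{V3} in this form, so the entire content is delegated to that citation. Your proposal rests on the same essential input but supplies the step this delegation hides: \emph{loc.\ cit.} establishes cancellation in $\Vdmme$, i.e.\ for bounded-above complexes and in particular for geometric (compact) motives, whereas here $C$ is an arbitrary unbounded object of $\DMe$ in the sense of \cite{CD1}. Your reduction --- $\eta_C$ is invertible iff $-\{1\}$ acts bijectively on Homs out of the compact generators $M(X)[n]$; this holds for $C=M(X)$ by \cite{V3} (using that Homs between geometric motives agree in $\Vdmme$ and $\DMe$, \emph{cf.} \ref{complexe_Suslin}); and the full subcategory of those $C$ with $\eta_C$ invertible is localizing --- is the standard, and as far as I can see necessary, way to make the paper's assertion rigorous in the unbounded setting, so your proof genuinely completes what the paper asserts. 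Two minor points. First, compactness of $\un\{1\}$ alone is not what makes $\uHom_{\DMe}(\un\{1\},-)$ commute with direct sums; one also needs the twisted generators $M(X)\{1\}$ to remain compact, but this follows from the same splitting you invoke, since $M(X)\{1\}$ is a direct factor of $M(X\times\PP^1_k)[-1]$, so the argument stands. Second, your closing remark that proposition \ref{cancellation} cannot serve as a shortcut is apt for an even stronger reason than the failure of $t$-exactness of $-\{1\}$: in the paper, \ref{cancellation} is itself \emph{deduced} from Voevodsky's cancellation theorem in $\Vdmme$, so that route would be circular.
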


\section{Cas stable}

\subsection{Spectres motiviques}

Avec Denis-Charles Cisinski,
 nous avons construit dans \cite[ex. 6.25]{CD1}
 la catégorie triangulée monoïdale symétrique fermée $\DM$
 munie d'une adjonction de catégories triangulées
$$
\Stab:\DMe \leftrightarrows \DM:\Lop
$$
telle que $\Stab$ est monoïdal symétrique
 et envoie le motif de Tate sur un objet inversible de $\DM$\footnote{
Cette catégorie est la catégorie triangulée monoïdale
universelle pour ce problème si l'on se restreint au catégories triangulées
qui sont des catégories homotopiques associées à une catégorie de
modèles monoïdale (\emph{cf.} \cite{Hov})}. On rappelle ici une construction
plus simple de cette catégorie utilisant les spectres non symétriques,
dont le défaut est de ne pas fournir de construction directe du produit tensoriel.
Nous indiquons pourquoi cette définition simplifiée est équivalente
 à celle de {\it loc. cit.} dans la proposition \ref{prop:eq_sp_sym&non_sym}.

\num De manière analogue à \ref{num:cat_gr},
on note $\ftrgr$ la categorie des faisceaux avec transferts
 positivement gradués.
Elle est abélienne de Grothendieck
avec pour générateurs la famille $(\rep X\{-i\})$ indexée
par les schémas lisses $X$ et les entier $i \geq 0$.
Elle est de plus monoïdale symétrique (fermée) avec la définition
analogue à celle de \emph{loc. cit.} du produit tensoriel, noté ici $\otrgr$.

Soit $T^*$ le faisceau avec transferts positivement gradué 
égal à $T^{\otr,n}$ en degré $n$. C'est un monoïde dans $\ftrgr$.
La catégorie des modules à gauche sur $T^*$ forme une catégorie
abélienne que l'on note $\Tmod$.
Le foncteur $T^*$-module libre $F_T$ induit une adjonction de catégories
\begin{equation} \label{adj_ftrgr_Tmod}
F_T:\ftrgr \rightarrow \Tmod:\cO_T
\end{equation}
et $\cO_T$ est exact, conservatif. Pour un schéma lisse $X$,
et un entier $i \geq 0$,
on note $T^*(X)\{-i\}$ l'image de $\rep X\{-i\}$ par $F_T$.
La catégorie $\Tmod$ est abélienne de Grothendieck avec pour générateurs
la famille $(T^*(X)\{-i\})$ où $X$ est un schéma lisse
 et $i \geq 0$ un entier.
Notons enfin que pour tout entier $i \in \NN$,
 on dispose encore d'une adjonction
\begin{equation} \label{adj_ftr_Tmod}
F_i:\ftr \leftrightarrows \Tmod:Ev_i
\end{equation}
telle que $Ev_i(F_*)=F_i$ et pour tout schéma lisse $X$,
 $F_i(\rep X)=T^*(X)\{-i\}$.
 
Du fait que le monoïde $T^*$ n'est pas commutatif,
 la catégorie $\Tmod$ n'est pas monoïdale symétrique.
On a malgré tout une action à gauche de $\ftr$ sur $\Tmod$
par la formule:
\begin{equation} \label{left_actn_ftr}
(G \otr F_*)_n=G \otr F_n
\end{equation}
la multiplication étant donnée grâce à l'isomorphisme
de symétrie du produit tensoriel sur $\ftr$.
Notons enfin que pour tout faisceau avec transferts $G$,
l'endofoncteur $G \otr ?$ de $\Tmod$ admet un adjoint
à droite.

\num Un complexe de $\Tmod$ est appelé un \emph{spectre de Tate}.
Un tel objet est donc une famille $(E_n)_{n \in \NN}$
de complexes de faisceaux avec transferts muni de morphismes:
$$
\mu_n:T \otrgr E_n \rightarrow E_{n+1}.
$$
ou encore, de manière équivalente, de morphismes
$$
\epsilon_n:E_n \rightarrow \Omega E_{n+1}.
$$
La catégorie dérivée de $\Tmod$ est bien définie
 et on obtient à l'aide de \eqref{adj_ftrgr_Tmod} un foncteur conservatif
\begin{equation} \label{O_DTmod_Dftrgr}
\cO_T:\Der(\Tmod) \rightarrow \Der(\ftrgr).
\end{equation}
Soit $(f_n:E_n \rightarrow F_n)_{n \in \NN}$ un morphisme
de spectres de Tate. 
On dit que $f$ est un fibration (resp équivalence faible) 
 si pour tout $n \in \NN$, $f_n$ est une fibration
 au sens de \ref{cat_modele_ftr} (resp. quasi-isomorphisme).
\begin{lm} \label{lm:modele_stable}
La classe des fibrations et des équivalences faibles définies
 ci-dessus induisent une structure de catégorie
 de modèles
  sur $\Comp(\Tmod)$ pour laquelle la catégorie homotopique 
  associée est la catégorie $\Der(\Tmod)$.
\end{lm}
\begin{proof}
Il s'agit d'un cas particulier de \cite[1.14]{Hov}
 appliqué à la catégorie de modèles de \ref{prop:modele_c_ftr}\footnote{
On peut aussi utiliser \cite[1.7]{CD1} avec
 la bonne structure de descente.}.
\end{proof}
Pour cette structure de catégorie de modèles
 et celle considérée dans \ref{prop:modele_c_ftr},
 le couple de foncteurs \eqref{adj_ftr_Tmod}
  est de manière évidente une adjonction de Quillen qui se dérive donc
  et induit une adjonction de catégories triangulées:
\begin{equation} \label{adj_Dftr_DTmod}
\derL F_i:\Der(\ftr) \leftrightarrows \Der(\Tmod):\derR Ev_i.
\end{equation}
Soit $X$ un schéma lisse.
Comme $\rep X$ est cofibrant,
 $\derL F_i(\rep X)=T^*(X)\{-i\}$.
On en déduit donc que pour tout spectre motivique $E$,
\begin{equation} \label{Hom_Der_Tmod}
\Hom_{\Der(\Tmod)}(T^*(X)\{-i\},E[n])=H^n(X,E_i).
\end{equation}
Par ailleurs, on vérifie en utilisant ces mêmes structures
de catégorie de modèles que l'action à gauche de $\ftr$ sur $\Tmod$ 
définie en \eqref{left_actn_ftr} s'étend en une action à gauche
de $\Der(\ftr)$ sur $\Der(\Tmod)$.

\num On peut définir la catégorie dérivée $\AA^1$-localisée
de $\Tmod$ en inversant la classe de flèches $\W_{\AA^1}$ engendrée 
par
$$
T^*(\AA^1_X)\{-i\} \rightarrow T^*(X)\{-i\}
$$
pour un schéma lisse $X$ et un entier $i \in \ZZ$
dans la catégorie $\Der(\Tmod)$.
On la note $\Der_{\AA^1}(\Tmod)$. On dispose donc
d'un foncteur canonique
$$
\pi:\Der(\Tmod) \rightarrow \Der_{\AA^1}(\Tmod).
$$
On peut considérer la localisation de Bousfield à gauche
 de la catégorie de modèles sur $\Comp(\ftr)$
 définie dans le lemme précédent dont la catégorie
 homotopique associée est $\Der_{\AA^1}(\Tmod)$\footnote{Ce qui
 est une façon de montrer que la catégorie localisée 
 $\Der_{\AA^1}(\Tmod)$ est bien définie.}.
On l'appelle la catégorie de modèles $\AA^1$-locale.
On obtient par la théorie des localisations de Bousfield
 un adjoint à droite du foncteur $\pi$
$$
\Der_{\AA^1}(\Tmod) \rightarrow \Der(\Tmod)
$$
qui est pleinement fidèle et dont l'image essentielle
s'identifie aux objets $\W_{\AA^1}$-locaux,
que l'on appelle spectres $\AA^1$-locaux.
Compte tenu de \eqref{Hom_Der_Tmod}, 
un spectre motivique $(E_n)_{n \in \NN}$ est $\AA^1$-local si
pour tout $n \in \NN$, le complexe $E_n$ est $\AA^1$-local.
Il est immédiat que l'adjonction \eqref{adj_Dftr_DTmod}
passe aux catégories $\AA^1$-localisées et induit
\begin{equation} \label{adj_DAftr_DATmod}
\derL_{\AA^1} F_i:\DMe \leftrightarrows \Der_{\AA^1}(\Tmod):\derR_{\AA^1} Ev_i.
\end{equation}

\num Pour tout schéma lisse $X$ et tout entier $i \geq 0$,
on obtient (en utilisant la symétrie du produit tensoriel sur $\ftr$)
un morphisme canonique de faisceaux gradués
$$
\left(T \otr T^*(X)\right)\{-i-1\} \rightarrow T^*(X)\{-i\}
$$
compatible à la structure de $T^*$-module.
Notons $\W_{\mathfrak S}$ la classe de flèches induites
dans la catégorie des spectres de Tate.
\begin{df}
On note $\DM$
 la localisation de la catégorie $\Der_{\AA^1}(\Tmod)$
 par rapport à la classe de flèches $\W_{\mathfrak S}$.
\end{df}
En utilisant à nouveau une localisation de Bousfield
à gauche de la catégorie de modèles $\AA^1$-locale 
sur $\Comp(\Tmod)$ par rapport à $\W_\mathfrak S$,
on obtient la catégorie de modèles dite \emph{stable}
sur $\Comp(\Tmod)$. Il est immédiat que celle-ci coïncide
avec celle introduite par Hovey dans \cite[3.3]{Hov} à partir
de la catégorie de modèles $\AA^1$-locale sur $\Comp(\ftr)$.
Appliquant \cite[3.4]{Hov}, on en déduit 
qu'un spectre de Tate $(E_n)$ est fibrant pour la structure
stable si et seulement si pour tout $n \in \NN$, 
le complexe $E_n$ est Nisnevich fibrant, $\AA^1$-local
et le morphisme structural $E_n \rightarrow \Omega E_{n+1}$
est un quasi-isomorphisme.
\begin{df}
Un spectre de Tate $(E_n)$ est appelé un \emph{spectre motivique}
si pour tout $n \in \ZZ$, $E_n$ est $\AA^1$-local
et si le morphisme structural $E_n \rightarrow \Omega E_{n+1}$
est un quasi-isomorphisme.
\end{df}
La catégorie $\DM$ est donc équivalente à la sous-catégorie pleine de
$\Der(\Tmod)$ formée des spectres motiviques. 
On considèrera désormais que 
 les objets de $\DM$ sont des spectres motiviques.

L'adjonction \eqref{adj_DAftr_DATmod} pour $i=0$ permet de définir
une adjonction
\begin{equation} \label{adj_DMe_DM}
\Stab:\DMe \leftrightarrows \DM:\Lop,
\end{equation}
qui n'est autre que l'adjonction dérivée de \eqref{adj_ftr_Tmod} pour
 $i=0$.
Puisqu'un spectre motivique $(E_n)$ est fibrant pour la structure
 de catégorie de modèles précédente, 
 on en déduit que le foncteur $\Lop$ associe à $(E_n)$ 
 le complexe motivique $E_0$.

\num \label{num:simplification}
Soit $C$ un complexe motivique, et $C' \rightarrow C$ une
 résolution cofibrante au sens de la catégorie de modèles
 \ref{A^1-eff-modele}.
On peut donner la description suivante du foncteur $\Stab$:
pour un entier $n \geq 0$, on définit a priori le foncteur
$\Stab$ en posant
$$
(\Stab C)_n=C\{n\}, n \geq 0
$$
où $C\{n\}$ désigne le complexe motivique 
$$
L_{\AA^1}(C \otrL T^n)=L_{\AA^1}(C' \otr T^n)
$$
conformément à la notation \ref{twist_Tate_suspendu}.
On définit de plus une structure de spectre de Tate sur $\Stab C$ 
grâce au morphisme d'adjonction suivant, pour un entier $n \geq 0$,
$$
C\{n\} \rightarrow \Omega (C\{n+1\}).
$$
Il résulte du théorème de simplification \ref{simplification_DM}
 que $\Stab C$ est alors un spectre motivique.
Par définition, $\Stab C$ est donc une résolution fibrante
 du spectre de Tate $F_0(C')$, ce qui montre qu'il coincide
 avec l'adjoint à gauche de \eqref{adj_DMe_DM}.
De cette description, on déduit que le morphisme d'adjonction
$$
C \rightarrow \Omega^\infty \Stab C
$$
est un isomorphisme: le foncteur $\Stab$ est pleinement fidèle.

\num L'action à gauche de la catégorie monoïdale $\Der(\ftr)$ sur $\Der(\Tmod)$
 peut être étendue en une action à gauche de $\DMe$ sur $\DM$. 
D'après \cite[3.8]{Hov}, l'action du complexe motivique $\un\{1\}$ sur $\DM$
 est inversible et le foncteur quasi-inverse est induit par le foncteur
 de décalage qui à un $\Omega$-spectre $E$ associe le spectre
 motivique $E\{-1\}$ - qui est encore un spectre motivique.
\begin{prop} \label{prop:eq_sp_sym&non_sym}
La catégorie $\DM$ introduite ici ainsi que l'adjonction
\eqref{adj_DMe_DM} coïncident avec celles de \cite[ex. 6.25]{CD1}.
\end{prop}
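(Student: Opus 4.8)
Le plan est de comparer les deux constructions par l'interm\'ediaire de l'adjonction usuelle entre spectres sym\'etriques et spectres non sym\'etriques. La cat\'egorie $\DM$ de \cite[ex.~6.25]{CD1} est la cat\'egorie homotopique de la structure de mod\`eles stable sur les spectres sym\'etriques de Tate, alors que la cat\'egorie introduite ici est celle de la structure de mod\`eles stable sur $\Comp(\Tmod)$, c'est-\`a-dire sur les spectres non sym\'etriques (modules sur le mono\"ide non commutatif $T^*$). Le foncteur $U$ d'oubli de l'action du groupe sym\'etrique, des spectres sym\'etriques vers les spectres non sym\'etriques, admet un adjoint \`a gauche de sym\'etrisation, et ce couple forme une adjonction de Quillen entre les deux structures stables. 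Je commencerais par rappeler cette adjonction et le fait qu'elle se d\'erive en une adjonction de cat\'egories triangul\'ees.

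Le coeur de la preuve consiste \`a appliquer le th\'eor\`eme de comparaison de Hovey \cite{Hov}: l'adjonction de Quillen ci-dessus est une \'equivalence de Quillen d\`es que la permutation cyclique de $\un\{1\}^{\otimes 3}$ induit l'identit\'e dans la cat\'egorie homotopique $\DMe$. Pour v\'erifier cette condition, j'utiliserais le lemme \ref{lm:permutation_tate}: apr\`es $\AA^1$-localisation, $T$ r\'ealise le motif $\un\{1\}$ et la transposition des deux facteurs de $\un\{1\}^{\otimes 2}$ vaut $-1$ dans $\DMe$ (version r\'ealis\'ee de ce lemme). La permutation cyclique de $\un\{1\}^{\otimes 3}$ s'\'ecrivant comme compos\'ee de deux transpositions de facteurs adjacents, chacune agissant par $-1$, elle vaut donc $(-1)^2=1$. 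La condition de Hovey est ainsi satisfaite, et $U$ se d\'erive en une \'equivalence de cat\'egories triangul\'ees entre la cat\'egorie $\DM$ d\'efinie ici et celle de \cite{CD1}.

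Il resterait enfin \`a identifier les adjonctions. Dans les deux constructions, le foncteur $\Stab$ est induit par le foncteur module libre de niveau $0$ (not\'e $F_0$ dans \eqref{adj_ftr_Tmod}, resp. son analogue sym\'etrique) et $\Lop$ par l'\'evaluation $Ev_0$. Comme le foncteur de comparaison $U$ commute au foncteur libre de niveau $0$ et \`a l'\'evaluation en degr\'e $0$, l'\'equivalence pr\'ec\'edente est compatible aux deux adjonctions \eqref{adj_DMe_DM}; la structure mono\"idale de \cite{CD1} se transporte alors le long de cette \'equivalence, ce qui justifie \emph{a posteriori} l'existence du produit tensoriel sur la construction simplifi\'ee. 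Le principal obstacle sera l'application correcte du th\'eor\`eme de Hovey, et plus pr\'ecis\'ement la traduction de la sym\'etrie anti-commutative de \ref{lm:permutation_tate} en la condition sur la permutation cyclique de $\un\{1\}^{\otimes 3}$; c'est cette propri\'et\'e, d\'ej\`a invoqu\'ee via \cite[3.8]{Hov} pour l'inversibilit\'e de l'action de $\un\{1\}$, qui rend licite la substitution des spectres sym\'etriques aux spectres non sym\'etriques.
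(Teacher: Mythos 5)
Your overall route is the same as the paper's: reduce the statement to Hovey's comparison theorem between symmetric and non-symmetric stabilization, applied to the $\AA^1$-local model structure on $\Comp(\ftr)$ with suspension object $T$, with Lemma \ref{lm:permutation_tate} as the input for the symmetry of $T$; your identification of the two adjunctions through the level-zero free and evaluation functors is also consistent with what \cite[9.4]{Hov} delivers.

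The gap lies in your verification of Hovey's hypothesis. Being \emph{symmetric} in the sense of \cite[9.2]{Hov} is a condition in the model category, not in its homotopy category: one needs an actual homotopy, in the $\AA^1$-local model structure on $\Comp(\ftr)$, between the cyclic permutation of $T^{\otimes 3}$ and the identity, with target $T^{\otimes 3}$ itself, because Hovey's argument uses this homotopy at the level of spectra and not merely its class in the homotopy category. Your sign argument (transposition $=-1$ by Lemma \ref{lm:permutation_tate}, hence cyclic permutation $=(-1)^2=1$) only proves equality in $\DMe$. The two statements are not equivalent here: $T^{\otimes 3}$ is cofibrant but not fibrant for the $\AA^1$-local structure (the sheaf $T^{\otimes 3}$, placed in degree $0$, is not homotopy invariant), so equality in $\DMe$ only yields a homotopy after composing with a fibrant replacement of $T^{\otimes 3}$, not a homotopy landing in $T^{\otimes 3}$. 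This is precisely why the paper appeals to \emph{the proof} of Lemma \ref{lm:permutation_tate} rather than to its statement: the explicit Suslin--Voevodsky correspondence constructed there provides a strong $\AA^1$-homotopy in $\Comp(\ftr)$, and such homotopies tensor and compose to exhibit the cyclic permutation of $T^{\otimes 3}$ as strongly homotopic to the identity, i.e. to make $T$ symmetric in Hovey's sense. (Note also that \cite[3.8]{Hov}, the invertibility of the action of $\un\{1\}$, holds for an arbitrary suspension object and does not rest on this symmetry property, contrary to what your last sentence suggests.) To complete your proof you must either extract this explicit homotopy, as the paper does, or invoke a comparison theorem whose symmetry hypothesis is formulated in the homotopy category --- which is not what \cite[9.4]{Hov} provides.
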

Il en résulte que la catégorie triangulée $\DM$ est munie d'une 
structure monoïdale symétrique fermée telle que le foncteur $\Stab$
est monoïdal symétrique. Par rapport au cas général traité dans
\emph{loc. cit.} on a obtenu ici que le foncteur $\Stab$ est pleinement
fidèle en utilisant le théorème de simplification \ref{simplification_DM}.
\begin{proof}
Il s'agit essentiellement de comparer la construction de $\DM$
donnée ici par les spectres motiviques avec celle \emph{loc. cit.} donnée
par les spectres motiviques symétriques.
D'après la preuve du lemme \ref{lm:permutation_tate},
 il existe une $\AA^1$-équivalence d'homotopie forte entre la permutation
 cyclique des facteurs de $T^3$ et l'identité dans la catégorie
 $\Comp(\ftr)$.
Il en résulte que $T$ est un objet symétrique de la catégorie
 de modèle $\AA^1$-locale $\Comp(\ftr)$ au sens de la définition
 \cite[9.2]{Hov}.
Dès lors, on peut appliquer \cite[9.4]{Hov} ce qui permet de
conclure.
\end{proof}

\num \label{inclusion_DMgm_DM}
Notons pour terminer que le foncteur pleinement fidèle
monoidal $\DMgme \rightarrow \DMe \rightarrow \DM$
s'étend par propriété universelle en un foncteur pleinement fidèle monoidal
$$
\DMgm \rightarrow \DM.
$$
L'image essentielle
de ce foncteur coïncide avec la catégorie des objets compacts
de $\DM$ d'après \cite{CD1}.

\subsection{t-structure homotopique}

\num \label{notation_H_DM}
Soit $E=(E_n,\epsilon_n:E_n \rightarrow \Omega E_{n+1})_{n \geq 0}$
 un spectre motivique.
On lui associe un unique module homotopique $\ugH*^0(M)$ tel que
pour tout $n \geq 0$,
$\ugH n^0(E)=\uH^0(E_n)$ avec la notation de \eqref{H^0_DMe}
et avec pour morphismes structuraux
$$
\uH^0(E_n)
 \xrightarrow{H^0(\epsilon_n)}
  \uH^0(\Omega E_{n+1}) = [\uH^0(E_{n+1})]_{-1}
$$
en appliquant le corollaire \ref{cor_uH&Omega}.
La graduation négative de $\ugH n^0(E)$ est définie de manière
 tautologique.
Pour tout entier $m \in \ZZ$, on pose $\ugH*^m(E)=\ugH*^0(E[m])$.
\begin{lm}
Les foncteurs définis ci-dessus vérifient les propriétés
 suivantes:
\begin{enumerate}
\item[(i)] Le foncteur $\ugH*^0:\DM \rightarrow \hmtr$
 est un foncteur cohomologique qui commute aux sommes quelconques.
\item[(ii)] La famille de foncteurs $(\ugH*^m)_{m \in \ZZ}$ est 
 conservative.
\end{enumerate}
\end{lm}
\begin{proof}
La catégorie $\Der(\ftrgr)$ porte naturellement une t-structure,
 puisque $\ftrgr$ est abélienne.
On note ${\underline{\tilde{H}}}_*^0$ le foncteur cohomologique associé.
Par définition, on obtient un carré commutatif
$$
\xymatrix{
\DM\ar^-{(2)}[r]\ar_{\ugH*^0}[d] & \Der(\ftrgr)\ar^{{\underline{\tilde{H}}}_*^0}[d] \\
\hmtr\ar^-{(1)}[r] & \NN\!-\!\ftr.
}
$$
où la flèche $(1)$ est la flèche d'oubli évidente
 et la flèche $(2)$ est induite par le foncteur \eqref{O_DTmod_Dftrgr}.
D'après \ref{construction_base_hmtr}, le foncteur (1)
 est exact et conservatif. Dès lors, comme ${\underline{\tilde{H}}}_*^0$
 envoie les triangles distingués sur des suites exactes
 longues, il en est de même de $\ugH*^0$.
Le fait que $\ugH*^0$ commute aux sommes infinies est 
 maintenant évident, ce qui implique (i).
L'assertion (ii) résulte maintenant du fait que $(2)$ est conservatif
 et du fait que ${\underline{\tilde{H}}}_*^0$ est le foncteur cohomologique
 d'une t-structure sur $\Der(\ftrgr)$.
\end{proof}

On dit qu'un spectre motivique est positif (resp. négatif)
 si pour tout $n<0$ (resp. $n>0$), $\ugH*^n(E)=0$.
Soit $\tau_{\leq 0}$ le foncteur de troncation négative pour la
 t-structure homotopique sur $\DMe$.
On vérifie en utilisant à nouveau le corollaire
\ref{cor_uH&Omega} que l'application de $\tau_{\leq 0}$
 degré par degré à un spectre motivique $E$ définit
 un spectre motivique négatif $\tau_{\leq 0} E$,
 avec un morphisme canonique:
$$
\tau_{\leq 0} E \rightarrow E.
$$
\begin{cor}
La catégorie $\DM$, munie de la notion d'objets négatifs
et positifs introduite ci-dessus, est une t-structure
dont le foncteur de troncation négatif est le foncteur $\tau_{\leq 0}$
introduit ci-dessus et dont le foncteur cohomologique associé est
le foncteur $\ugH*^0$.
\end{cor}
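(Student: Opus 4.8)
Le plan est de v�rifier les trois axiomes d'une t-structure pour le couple $(\DM^{\leq 0},\DM^{\geq 0})$, o� $\DM^{\leq 0}$ (resp.\ $\DM^{\geq 0}$) d�signe la classe des spectres n�gatifs (resp.\ positifs). La stabilit� de ces classes par les d�calages ad�quats est imm�diate sur la d�finition par annulation des $\ugH*^n$. Pour l'axiome de troncation, je poserais $\tau_{\geq 1}E$ �gal au c�ne du morphisme canonique $\tau_{\leq 0}E \rightarrow E$ construit ci-dessus. Le foncteur $\derR_{\AA^1}Ev_n$ �tant triangul�, on calcule degr� par degr�
$$
(\tau_{\geq 1}E)_n=\mathrm{cone}\big(\tau_{\leq 0}(E_n) \rightarrow E_n\big)=\tau_{\geq 1}(E_n),
$$
qui est positif dans $\DMe$ pour la t-structure homotopique effective ; donc $\tau_{\geq 1}E$ appartient � $\DM^{\geq 1}$ et le triangle $\tau_{\leq 0}E \rightarrow E \rightarrow \tau_{\geq 1}E \xrightarrow{+1}$ est le triangle de troncation voulu.

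Le c�ur de la preuve est l'\emph{orthogonalit�} : l'annulation de $\Hom_{\DM}(E,F)$ pour $E$ n�gatif et $F \in \DM^{\geq 1}$. L'id�e est de la ramener � l'orthogonalit� dans le cas effectif, d�j� connue puisque la t-structure homotopique sur $\DMe$ est �tablie. J'�crirais $E$ comme colimite homotopique de ses suspensions, $E=\mathrm{hocolim}_n\, \Stab(E_n)\{-n\}$, les morphismes de transition provenant des morphismes structuraux du spectre (\emph{cf.}\ \cite{Hov}). La suite exacte de Milnor fournit alors
$$
0 \rightarrow {\varprojlim_n}^{1}\Hom_{\DM}\big(\Stab(E_n)\{-n\}[1],F\big)
 \rightarrow \Hom_{\DM}(E,F)
 \rightarrow \varprojlim_n\Hom_{\DM}\big(\Stab(E_n)\{-n\},F\big) \rightarrow 0.
$$
Or l'adjonction \eqref{adj_DMe_DM} et l'inversibilit� du twist donnent
$\Hom_{\DM}(\Stab(E_n)\{-n\},F)=\Hom_{\DMe}(E_n,(F\{n\})_0)=\Hom_{\DMe}(E_n,F_n)$.
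Comme $E$ est n�gatif, $E_n$ est n�gatif dans $\DMe$ ; comme $F \in \DM^{\geq 1}$, $F_n$ est dans $\DMe^{\geq 1}$ ; l'orthogonalit� effective annule donc ce groupe. Le m�me argument, appliqu� � $F_n[-1] \in \DMe^{\geq 2}$, annule le terme calculant le $\varprojlim^1$. On conclut que $\Hom_{\DM}(E,F)=0$.

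Ces deux points entra�nent que $(\DM^{\leq 0},\DM^{\geq 0})$ est une t-structure. Par unicit� du triangle de troncation, son foncteur de troncation n�gatif co�ncide avec le foncteur $\tau_{\leq 0}$ ci-dessus. Enfin, d'apr�s le lemme pr�c�dent, $\ugH*^0$ est un foncteur cohomologique ; comme il annule par d�finition les spectres de $\DM^{\geq 1}$ et de $\DM^{\leq -1}$, il se factorise par le c�ur et s'identifie au foncteur cohomologique $H^0$ de la t-structure. La difficult� essentielle reste l'orthogonalit�, dont toute la substance est de transf�rer niveau par niveau l'orthogonalit� effective gr�ce � la pr�sentation de $E$ en colimite homotopique et � la suite de Milnor.
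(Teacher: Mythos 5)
Your proof is correct, but on the key axiom --- orthogonality --- it takes a genuinely different route from the one the paper relies on. The paper states this corollary without any written proof because, in its construction, all three axioms are inherited at once from an ambient derived category of an abelian category: $\DM$ is identified with the \emph{full} subcategory of $\Der(\Tmod)$ (equivalently of $\Der_{\AA^1}(\Tmod)$) formed by the motivic spectra, and since cohomology of complexes of $T^*$-modules is computed levelwise (the forgetful functor to $\ftrgr$ is exact and conservative, as exploited in the lemma preceding the corollary), a motivic spectrum $E$ is negative (resp.\ positive) for the homotopy t-structure exactly when it lies in $\Der(\Tmod)^{\leq 0}$ (resp.\ $\Der(\Tmod)^{\geq 0}$) for the canonical t-structure of that derived category. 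Fullness then gives
$$
\Hom_{\DM}(E,F)=\Hom_{\Der(\Tmod)}(E,F)=0
$$
for $E$ negative and $F\in\DM^{\geq 1}$ in one line, while the degreewise truncation (which remains a motivic spectrum thanks to Corollary \ref{cor_uH&Omega}, i.e.\ ultimately the cancellation theorem \ref{simplification_DM}) supplies the truncation triangles; this is exactly parallel to the way the effective t-structure was transported from $\Der(\ftr)$ to $\DMe$ through the fully faithful inclusion. Your argument instead writes $E$ as $\mathrm{hocolim}_n\, \Stab(E_n)\{-n\}$ and uses the Milnor $\varprojlim^1$-sequence to reduce orthogonality, level by level, to the effective case via the adjunction $(\Stab,\Lop)$ and the invertibility of the twist. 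This is heavier, but it buys robustness: it only invokes the effective t-structure and formal properties of sequential homotopy colimits in a stable homotopy category, so it would survive in a presentation of $\DM$ (e.g.\ by symmetric spectra, as in \cite{CD1}) where morphism groups are not literally computed in the derived category of an abelian category. Your treatment of the truncation axiom (levelwise computation of the cone through the triangulated evaluation functors) and your identification of $\ugH*^0$ with the cohomological functor of the t-structure are consistent with the paper's setup.
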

On appelle cette t-structure la t-structure homotopique
 sur $\DM$. 
Notons que d'après la définition de \ref{notation_H_DM},
le diagramme suivant est commutatif:
$$
\xymatrix@C=30pt@R=12pt{
\DM\ar^{\uH^0_*}[r]\ar_{\Lop}[d] & \hmtr\ar^{\hLop}[d] \\
\DMe\ar^-{\uH^0}[r] & \hftr.
}
$$
Il en résulte que $\Lop$ est $t$-exact.
 
\subsection{Coeur homotopique}

\begin{num} \label{num:mhtr_rep}
D'après la construction précédente,
 la catégorie des modules homotopiques
 $\hmtr$ est le coeur de $\DM$ pour la t-structure homotopique.

Avec les notations de la première partie,
 le module homotopique représenté par un schéma lisse $X$
 est égal à $h_{0,*}(X)=\uH^0_*(M(X))$.
Pour tout entier $n \in \ZZ$, on obtient aussi
 une identification de modules homotopiques
 $h_{0,*}(X)\{n\}=\uH_*^0\big(M(X)\{n\}\big)$\footnote{On fera attention
  à la confusion possible introduite par cette notation: étant donné un module
  homotopique $F_*$ et un entier $n>0$, l'objet twisté $F_*\{n\}$ 
  n'est pas le même s'il est calculé dans la catégorie $\hmtr$ ou $\DM$.
  C'est pourquoi on précise ici que $h_{0,*}(X)\{n\}$ est considéré
  comme un module homotopique.}.

Notons plus généralement que tout schéma algébrique $X$
 définit d'après \cite{V1} un complexe motivique $\sing \rep X$.
Pour tout entier $i\geq 0$,
 on obtient un module homotopique:
$$
h_{i,*}(X):=\uH_*^{-i}(\Stab \sing \rep X).
$$
\end{num}

Si $E$ est un spectre motivique,
 on lui associe pour tout entier $p \in \ZZ$
 un module de cycles $\umH^p_*(E)$
 obtenu en appliquant le foncteur \emph{transformée générique}
 de \ref{thm:main} au module homotopique $\uH^p_*(E)$.
Pour tout corps de fonction $L$, et tout entier $n \in \ZZ$,
$$
\umH^p_n(E).L=\ilim{A \subset L} \Hom_{\DM}(M(\spec A),E\{n\}[p]).
$$
Compte tenu du théorème \ref{thm:main},
 on obtient donc le corollaire suivant:
\begin{cor} \label{cor:motifs->modl}
La catégorie des modules de cycles est le coeur
 de la t-structure homotopique sur $\DM$,
 via le foncteur $\umH^0_*$.
\end{cor}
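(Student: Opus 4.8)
Le plan est de combiner deux r\'esultats d\'ej\`a \'etablis dans l'excerpt. D'une part, la discussion pr\'ec\'edant l'\'enonc\'e (\emph{cf.} le corollaire qui pr\'ec\`ede \ref{num:mhtr_rep}, et le d\'ebut de \ref{num:mhtr_rep}) montre que la t-structure homotopique sur $\DM$ admet pour foncteur cohomologique associ\'e le foncteur $\uH^0_*:\DM \rightarrow \hmtr$, de sorte que son coeur s'identifie \`a la cat\'egorie des modules homotopiques $\hmtr$. D'autre part, le th\'eor\`eme \ref{thm:main} affirme que la transform\'ee g\'en\'erique $F_* \mapsto \hat F_*$ est une \'equivalence de cat\'egories $\hmtr \xrightarrow{\sim} \modl$, de quasi-inverse $\phi \mapsto F^\phi_*$.

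Par d\'efinition, le foncteur $\umH^0_*$ n'est autre que le compos\'e
$$
\DM \xrightarrow{\uH^0_*} \hmtr \xrightarrow{(\ )^\wedge} \modl,
$$
o\`u la premi\`ere fl\`eche est le foncteur cohomologique de la t-structure homotopique et la seconde la transform\'ee g\'en\'erique. Le premier facteur r\'ealise le coeur de $\DM$ comme $\hmtr$, tandis que le second est une \'equivalence d'apr\`es \ref{thm:main}. La composition r\'ealise donc la cat\'egorie $\modl$ comme le coeur de la t-structure homotopique via $\umH^0_*$, ce qui est exactement l'\'enonc\'e du corollaire.

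Ce corollaire est ainsi une cons\'equence purement formelle des deux r\'esultats cit\'es, et il n'y a pas d'obstacle v\'eritable \`a surmonter : tout le contenu r\'eside dans le th\'eor\`eme \ref{thm:main} et dans l'identification pr\'ealable du coeur de $\DM$. La seule v\'erification de routine consiste \`a confirmer que la formule explicite annonc\'ee plus haut,
$$
\umH^p_n(E).L = \ilim{A \subset L} \Hom_{\DM}(M(\spec A),E\{n\}[p]),
$$
co\"incide bien avec l'application de la transform\'ee g\'en\'erique au module homotopique $\uH^p_*(E)$ ; cela r\'esulte directement de la description de $\hat F_*$ comme syst\`eme des fibres de $F_*$ donn\'ee en \ref{mhtp->mcycl}, compte tenu de l'\'egalit\'e $\hat\varphi(M(E)\{n\})=\hat F_{-n}(E)$ rappel\'ee en \emph{loc. cit.}
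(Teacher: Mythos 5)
Votre démonstration est correcte et suit essentiellement la même voie que l'article : celui-ci déduit le corollaire, sans autre argument, de l'identification préalable du coeur de $\DM$ avec $\hmtr$ (\emph{cf.} \ref{num:mhtr_rep}) composée avec l'équivalence $\hmtr \simeq \modl$ du théorème \ref{thm:main}, le foncteur $\umH^0_*$ étant par définition le composé de $\uH^0_*$ et de la transformée générique. Votre vérification finale de la formule pour $\umH^p_n(E).L$ est un complément de routine cohérent avec ce que l'article affirme juste avant l'énoncé.
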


Ainsi, on peut associer à tout schéma algébrique $X$
 et tout entier $i \in \NN$
 un module de cycles $\hat h_{i,*}(X)$.
Pour tout corps de fontions $L$,
le gradué de degré $0$ de ce module de cycles est donné par l'homologie de Suslin de $X$:
$$
\hat h_{i,0}(X).L=H_i^{sing}(X_L/L)
$$
avec les notations de \cite{SV1}. \\
Si $X$ est projectif lisse connexe de dimension $d$,
 le motif $M(X)=\Stab \sing \rep X$ dans $\DM$
 est fortement dualisable avec pour dual fort $M(X)(-d)[-2d]$
 d'après \cite[2.16]{Deg6}.
Il en résulte que pour tout corps de fonctions $L$,
\begin{equation} \label{eq:rep_dual_cohm}
\hat h_{i,n}(X).L=H^{2d+i+n,d+n}_\cM(X_L),
\end{equation}
où $H^{s,t}_\cM(X_L)$ désigne la cohomologie motivique
 de $X$ étendu à $L$ en degré $s$ et twist $t$.
 
\num Considérons un corps de fonctions $L$ et un entier $n \in \ZZ$.
On lui associe un pro-objet de $\hmtr$:
$$
h_{0,*}(L)\{n\}=\pplim{A \subset L} h_{0,*}(\spec A)\{n\},
$$
où $A$ parcourt l'ensemble ordonné filtrant des sous-$k$-algèbres
de type fini de $L$ dont le corps des fractions est $L$.
\begin{lm}
Pour tout spectre motivique $E$, et tout entier $i \in \ZZ$,
$$
\Hom_{\pro\!-\!\DM}(M(L)\{n\},E[i])
 =\Hom_{\pro\!-\!\hmtr}(h_0(L)\{n\},\uH^i_*(E)).
$$
\end{lm}
Utilisant le théorème de simplification,
 on se réduit au cas effectif qui résulte de \cite[3.4.4]{Deg6}
 -- il s'agit essentiellement du fait que le pro-objet $(L)$
  est un point pour la topologie de Nisnevich.

Notons $\hmtro$ la sous-catégorie pleine de $\pro\!-\!\hmtr$
 formée des objets de la forme $h_{0,*}(L)\{n\}$ pour un corps
 de fonctions $L$ et un entier $n \in \ZZ$.
Alors, d'après le lemme précédent, le foncteur canonique
$$
\DMgmo \rightarrow \hmtro, M(L)\{n\} \mapsto \uH^0_*(M(L)\{n\})
$$
est une équivalence de catégories\footnote{Le cas effectif avait
 déjà été traité dans \cite[3.4.7]{Deg5}.}.

Ainsi, les motifs génériques apparaissent comme des pro-objets
 de la catégorie abélienne $\hmtr$, qui définissent des foncteurs
 fibres de cette catégorie
  (\emph{i.e.} exacts, commutant aux sommes infinies).
Cette interprétation des motifs génériques est donc très proche
 de la notion de points d'un topos.
La transformée générique d'un module homotopique $F_*$ est finalement
 donnée par la \emph{restriction de $F_*$ à cette catégorie de points}.

\rem Notons que les morphismes de spécialisations correspondants aux données (D1)
 à (D4) des modules de cycles sont donc définis dans la catégorie
 abélienne des pro-modules homotopiques.
A titre d'illustration, considérons un corps de fonctions valué $(L,v)$.
Soit $\kappa_v$ son corps résiduel
 et $\varphi:\mathcal O_v \rightarrow L$ l'inclusion de son anneau des
 entiers.
On obtient par application du fonteur $H^0_*$ au triangle de Gysin
 une suite exacte courte dans $\pro\!-\!\hmtr$:
$$
h_{0,*}(\kappa_v)\{1\}
 \xrightarrow{\partial_v} h_{0,*}(L)
  \xrightarrow{\varphi^*} h_{0,*}(\mathcal O_v) \rightarrow 0.
$$

\section{Suite spectrale de Gersten et t-structure homotopique}

\num \label{num:ssp}
Soit $\A$ une catégorie abélienne.

Rappelons qu'un couple exact dans $\A$ est la donnée d'objets
bigradués $E_1^{p,q}$ et $D_1^{p,q}$, pour des indices $(p,q) \in \ZZ^2$
et des morphismes homogènes
$$
\xymatrix@C=20pt@R=22pt{
D_1\ar^{(1,-1)}_\alpha[rr]
 &&  D_1\ar^{(0,0)}_/8pt/\beta[ld] \\
 & E_1\ar^{(1,0)}_/-6pt/\gamma[lu] &
}
$$
dont les bidegrés sont indiqués sur le diagramme.
Rappelons (\emph{cf.} \cite[2.3]{McC}) que l'on associe à un tel couple
exact une suite spectrale dont la première page est $E_1^{p,q}$
avec pour différentielles les morphismes
$d_1=\gamma \circ \beta$.

Considérons maintenant un complexe $K$ de $\A$.
On suppose donnés pour tout entier $p\in \ZZ$ les complexes et morphismes
suivants:
\begin{equation} \label{proto_ce}
\xymatrix@C=16pt@R=22pt{
& F^{p+1}K\ar@{^(->}^{f^p}[rr]\ar@{^(->}_{i^{p+1}}[ld]
 && F^pK\ar@{^(->}^{i^{p}}[rd]\ar@{->>}^{\pi^p}[ld] & \\
K\ar@{->>}_{k^{p+1}}[rd] && G^pK\ar@{^(->}_{\tilde \pi^p}[ld]
 && K\ar@{->>}^{k^p}[ld] \\
& T^{p+1}K\ar@{->>}_{\tilde f^p}[rr] & & T^pK
}
\end{equation}
On demande que pour tout $p\in \ZZ$,
 les couples $(i^p,k^p)$, $(f^p,\pi^p)$, $(\tilde f^p,\tilde \pi^p)$
 forment des suites exactes courtes dans la catégorie abélienne
 $\Comp(\A)$.
On notera en particulier que $F^*K$ (resp. $T^*K$) 
 définit une filtration (resp. cofiltration) de $K$.
Bien entendu, l'une détermine l'autre.

Il en résulte que, passant à la catégorie dérivée
 $\T:=\Der(\A)$,
 on obtient le diagramme suivant
\begin{equation} \label{octaedre}
\xymatrix@R=16pt@R=22pt{
& F^{p+1}K\ar^{f^p}[rr]\ar_{i^{p+1}}[ld]
 && F^pK\ar^{i^{p}}[rd]\ar^{\pi^p}[ld] & \\
K\ar_{k^{p+1}}[rd]\ar@{}|{*}[r]
 && G^pK\ar_{\tilde \pi^p}[ld]\ar^{^{+1}}[lu]\ar@{}|{*}[d]\ar@{}|{*}[u]
 && K\ar^{k^p}[ld]\ar@{}|{*}[l] \\
& T^{p+1}K\ar_{\tilde f^p}[rr]\ar^/6pt/{^{+1}}[uu]
 && T^pK\ar_/6pt/{^{+1}}[uu]\ar^/6pt/{^{+1}}[lu]
}
\end{equation}
dans lequel les triangles marqués d'une étoile sont distingués
 et les autres sont commutatifs. Autrement dit, on obtient un \emph{octaèdre}
 dans la catégorie triangulée $\T$.

Supposons donné par ailleurs un foncteur (co)homologique
 $\varphi:\Der(\A) \rightarrow \B$, et posons
 $\varphi^n=\varphi(.[n])$.
On peut alors définir des objets bigradués:
$$
D_1^{p,q}=\varphi^{p+q}(F^pK),
 \tilde D_1^{p,q}=\varphi^{p+q}(T^{p+1}K),
E_1^{p,q}=\varphi^{p+q}(G^pK),
 E_\infty^{p,q}=\varphi^{p+q}(K).
$$
pour $(p,q) \in \ZZ^2$.
Par application du foncteur cohomologique $\varphi$ 
 au diagramme précédent, on obtient un diagramme
 commutatif d'objets bigradués, 
 formé de morphismes homogènes,
$$
\xymatrix@R=18pt@C=24pt{
& D_1\ar^{\alpha}[rr]\ar_{}[ld]
 && D_1\ar^{}[rd]\ar^{\beta}[ld] & \\
E_\infty\ar_{}[rd]\ar@{}|{}[r]
 && E_1\ar_{\tilde \gamma}[ld]\ar^{\gamma}[lu]\ar@{}|/3pt/{(2)}[d]\ar@{}|/3pt/{(1)}[u]
 && E_\infty\ar^{}[ld]\ar@{}|{}[l] \\
& \tilde D_1\ar_{\tilde \alpha}[rr]\ar^{r}[uu]
 && \tilde D_1\ar_{r}[uu]\ar_{\tilde \beta}[lu]
}
$$
dans lequel les triangles $(1)$ et $(2)$ forment un couple exact
(avec les conventions rappelées ci-dessus).
Ce diagramme commutatif\footnote{On reconnaitra un cas particulier
 de \og système de Rees \fg~suivant la terminologie introduite 
 par Eilenberg et Moore
 (\emph{cf.} \cite[3.1]{McC}).}
 montre que les suites spectrales
 associées respectivement à (1) et (2) sont \emph{égales}
 --- l'assertion concernant les différentielles de la première page 
  est par exemple immédiate.
Notons enfin que lorsque la filtration $F^*K$ est bornée\footnote{Il
 est probable si la suite spectrale dégénère en $(p,q)$,
  $E_r^{p,q} \simeq E_\infty^{p,q}$ pour $r>>0$.}
(ou ce qui revient au même $T^*K$), le terme à l'infini
de cette suite spectrale est ce que nous avons noté $E_\infty$
et la suite spectrale converge.

\rem
\begin{enumerate}
\item Un cas particulier fondamental est celui où le foncteur 
$\varphi$ est le foncteur (co)homologique $H^0:\Der(\A) \rightarrow \A$ canonique.
La suite spectrale obtenue ci-dessus est alors la suite spectrale
du complexe filtré $(K,F^*K)$ (\emph{cf.} \cite{Del}).
\item Suivant la construction ci-dessus, on reconnait donc dans
la donnée d'un système de Rees la trace d'un octaèdre,
en l'occurence le diagramme \eqref{octaedre}.
\item Plus généralement, c'est la famille des diagrammes \eqref{octaedre} 
qui est fondamentale pour définir la suite spectrale précédente. Ainsi,
le procédé décrit ci-dessus à partir de ces diagrammes a un sens
dans nimporte quelle catégorie triangulée, indépendamment de la manière
dont on a donné naissance à ces diagrammes. Si on se place par
ailleurs dans une catégorie \og triangulée enrichie \fg~$\T$
-- c'est-à-dire une $\infty$-catégorie stable dans le sens de \cite{Lurie}, 
comme par exemple la catégorie homotopique d'une DG-catégorie, 
ou encore la catégorie homotopique d'une catégorie de modèles stable --
on peut associer canoniquement à la donnée des morphismes $(f^p,i^p)$
(resp. $(\tilde f^p,\tilde i^p)$),
des diagrammes du type \eqref{octaedre} en utilisant le foncteur
\emph{cofibre homotopique} (resp. \emph{fibre homotopique}).
\item Remarquons que tout foncteur triangulé $\psi:\T' \rightarrow \T$
envoie une famille de diagrammes \eqref{octaedre} sur une famille
du même type. 
En général, le foncteur (co)homologique $\varphi$ se décompose
en $H^0 \circ \psi$ où $\psi$ est un foncteur triangulé et
$H^0$ est le foncteur (co)homologique d'une t-structure donnée sur $\T$.
Dans le cadre qui suit, $\psi$ est un foncteur dérivé (à droite).
Les catégories $\T$ et $\T'$ sont les catégories homotopiques
de catégories de modèles stables. Il y a lieu dans ce cas
de remplacer l'hypothèse que $\tilde f^p$ est induit par
un épimorphisme de complexes par l'hypothèse que c'est une fibration\footnote{
On voir alors $G^p$ comme la cofibre homotopique de $\tilde f^p$,
qui est un objet fibrant. L'avantage est qu'il n'y a alors par lieu de dériver
le foncteur de Quillen à droite sous-jacent à $\psi$.}
pour la catégorie de modèles sous-jacente à $\T$.
Ce cadre correspond à une \og tour de fibrations \fg~et à la suite spectrale 
qui lui est associée en topologie algébrique. La question de la convergence
de cette suite spectrale est alors reliée au problème de savoir si la flèche
canonique
$$
K \rightarrow \mathrm{holim}_{p \in \ZZ} T^pK
$$
est une équivalence faible.

Remarquons que dualement,
si $\psi$ est un foncteur dérivé à gauche, il y a lieu de supposer
que $f^p$ est une cofibration ; ce cas correspond dans le cadre d'une catégorie
de modèles abstraite au cas particuliers des complexes filtrés dans une catégorie
dérivée. Dans ce cas, la convergence est reliée à la flèche canonique
$$
\mathrm{hocolim}_{p \in \ZZ} F^pK \rightarrow K.
$$
\end{enumerate}

\num \label{2ssp}
Pour les deux prochains exemples, on fixe un schéma lisse $X$
 et un spectre motivique $\E$. On pose $\E_0=\Omega^\infty \E$
 vu comme complexe de faisceau Nisnevich sur $\sm$.

\noindent \textit{Suite spectrale n°1}.--
La t-structure homotopique sur $\DM$
permet d'obtenir un diagramme du type \eqref{octaedre} dans la
catégorie triangulée $\DM$:
$$
\xymatrix@C=16pt@R=20pt{
& \tau_{\leq -p-1} \E\ar[rr]\ar[ld]
 && \tau_{\leq -p} \E\ar[rd]\ar[ld] & \\
\E\ar[rd]\ar@{}|{*}[r]
 && \uH^{-p}_*(\E)[p]\ar[ld]\ar_/-5pt/{_{+1}}[lu]
      \ar@{}|{*}[u]\ar@{}|{*}[d]
 && \E\ar[ld]\ar@{}|{*}[l] \\
& \tau_{> -p-1}\ar[rr]\ar^/8pt/{_{+1}}[uu]
 && \tau_{> -p-1}\ar_/8pt/{_{+1}}[uu]\ar^/2pt/{_{+1}}[lu]
}
$$
Si $X$ est un schéma lisse, on obtient donc après application
du foncteur (co)homologique $\Hom(M(X),.)$ un couple exact et une
suite spectrale:
$$
'E_{1,t}^{p,q}=\Hom(M(X),\uH^{-p}_*(\E)[2p+q])
 \Rightarrow \Hom(M(X),\E[p+q])
$$
qui est la suite spectrale d'hypercohomologie associée
à la t-structure homotopique.

Suivant l'usage, on renumérote cette suite spectrale pour qu'elle
commence au terme $E_2$ suivant la règle $E_{2,t}^{p,q}={}'E_{1,t}^{-q,p+2q}$.
Un petit calcul donne alors la forme finale suivante pour cette
suite spectrale:
$$
E_{2,t}^{p,q}=H^p(X,\uH^q_0 \E) \Rightarrow H^{p+q}(X,\E_0).
$$
Remarquons que cette suite spectrale est convergente puisqu'elle
est concentrée dans la colonne $0 \leq p \leq \dim X$.

\noindent \textit{Suite spectrale n°2}.--
Rappelons\footnote{Cette définition est classique; 
on se réfère à \cite[section 3]{Deg5}, pour plus de détails.} 
qu'un drapeau de $X$ est une suite
décroissante $(Z^p)_{p \in \NN}$ de sous-schémas fermés de $X$ telle
que $\mathrm{codim}_X(Z^p) \geq p$. L'ensemble des drapeaux de $X$,
 ordonné par l'inclusion termes à termes, est filtrant.
Etant donné un tel drapeau,
on peut considérer le diagramme suivant dans la catégorie des faisceaux avec transferts:
$$
\xymatrix@C=-10pt@R=20pt{
& \rep{X-Z^p}\ar@{^(->}^{f_{p*}}[rr]\ar@{^(->}_/3pt/{i_{p*}}[ld]
 && \rep{X-Z^{p+1}}\ar@{^(->}^/3pt/{i_{p+1*}}[rd]\ar@{->>}^/-10pt/{\pi_p}[ld] & \\
\rep X\ar@{->>}_/-6pt/{k_{p}}[rd]
 && \rep{X-Z^{p+1}/X-Z^p}\ar@{^(->}_/6pt/{\tilde \pi_p}[ld]
 && \rep X\ar@{->>}^/-6pt/{k_{p+1}}[ld] \\
& \rep{X/X-Z^p}\ar@{->>}_{\tilde f_p}[rr] & & \rep{X/X-Z^{p+1}}
}
$$
Les morphismes $f^p$ et $i^p$ désignent les immersions ouvertes canoniques.
On obtient donc un diagramme du type \eqref{proto_ce}, à ceci près que
la filtration donnée par $f_{p*}$ est décroissante.
Ce diagramme est naturellement fonctoriel (contravariant) par rapport
 à l'inclusion des drapeaux.
Il induit donc un diagramme commutatif du type \eqref{octaedre}
dans la catégorie $\Der(\ftr)$. En prenant son image par le foncteur
triangulé $\Der(\ftr) \rightarrow \DMe \rightarrow \DM$,
on obtient donc un diagramme de la forme suivante:
$$
\xymatrix@C=-10pt@R=20pt{
& M(X-Z^p)\ar^{f_{p*}}[rr]\ar_/3pt/{i_{p*}}[ld]
 && M(X-Z^{p+1})\ar^/3pt/{i_{p+1*}}[rd]\ar^/-10pt/{\pi_p}[ld] & \\
M(X)\ar_/-6pt/{k_{p}}[rd]\ar@{}|{*}[r]
 && M(X-Z^{p+1}/X-Z^p)\ar_/6pt/{\tilde \pi_p}[ld]\ar_/-5pt/{_{+1}}[lu]
      \ar@{}|{*}[u]\ar@{}|{*}[d]
 && M(X)\ar^/-6pt/{k_{p+1}}[ld]\ar@{}|{*}[l] \\
& M(X/X-Z^p)\ar_{\tilde f_p}[rr]\ar^/8pt/{_{+1}}[uu]
 && M(X/X-Z^{p+1})\ar_/8pt/{_{+1}}[uu]\ar^/2pt/{_{+1}}[lu]
}
$$
Considérons un spectre motivique $\E$. Appliquant le foncteur
$\derR \Hom_{\DM}(.,\E)$ au diagramme précédent,
on obtient un diagramme dans la catégorie triangulée
$\Der(\ab)$ qui est précisément de la forme \eqref{octaedre}
où l'on a posé:
\begin{align*}
K&=\derR \Hom(M(X),\E), 
&F^pK&=\derR \Hom(M(X/X-Z^p),\E), \\
T^pK&=\derR \Hom(M(X-Z^p),\E),
&G^pK&=\derR \Hom(M(X-Z^{p+1}/X-Z^{p}),\E).
\end{align*}
Le diagramme ainsi obtenu est naturel covariant par
rapport aux inclusions de drapeaux.
Comme les limites inductives filtrantes sont exactes
dans $\Der(\ab)$, on en déduit un diagramme de la forme 
\eqref{octaedre} avec:
\begin{align*}
K&=\derR \Hom(M(X),\E), \\
F^p_cK&=\ilim{Z^* \in \Drap(X)} \derR \Hom(M(X/X-Z^p),\E) \\
T^p_cK&=\ilim{Z^* \in \Drap(X)}\derR \Hom(M(X-Z^p),\E), \\
G^p_cK&=\ilim{Z^* \in \Drap(X)}\derR \Hom(M(X-Z^{p+1}/X-Z^{p}),\E).
\end{align*}
Après application du foncteur cohomologique canonique,
on en déduit donc une suite spectrale:
$$
E_{1,c}^{p,q}=\ilim{Z^* \in \Drap(X)} \E^{p+q}(X-Z^{p+1}/X-Z^{p})
 \Rightarrow \E^{p+q}(X)
$$
qui n'est autre que le suite spectrale du coniveau associée
à la théorie cohomologique représentée par $\E$ (voir \cite{BO}
pour la référence originale). Notons que cette suite spectrale
est convergente puisqu'un drapeau de $X$ est de longeur
bornée par la dimension de $X$.

Dans \cite[4.5]{Deg5}, on démontre qu'il existe un isomorphisme
canonique de complexes de groupes abéliens:
$$
E_{1,c}^{*,q} \simeq C^*(X,\umH^q_*(\E))_0
$$
où le membre de droite est le complexe de cycles (en degré $0$) 
à coefficients dans le module de cycles $\umH^q_*(\E)$ 
(\emph{cf.} corollaire \ref{cor:motifs->modl}).

On obtient donc la forme suivante de la suite spectrale
du coniveau:
$$
E_{2,c}^{p,q}=A^p(X,\umH^q_*(\E))_0 \Rightarrow H^{p+q}(X,\E_0)
$$

D'après la proposition \ref{iso_coh_gpe_chow},
on en déduit donc un isomorphisme
$$
\varphi_2^{p,q}:E_{2,t}^{p,q} \rightarrow E_{2,c}^{p,q}.
$$
\begin{thm} \label{thm:comparaison_ssp}
La famille d'isomorphismes $\varphi_2^{p,q}$ est compatible
aux différentielles des deux suites spectrales définies
ci-dessus.

De plus, elle induit de proche en proche des isomorphismes
compatibles aux différentielles
$\varphi_r^{p,q}:E_{r,t}^{p,q} \rightarrow E_{r,c}^{p,q}$
 pour tout $r\geq 2$.
\end{thm}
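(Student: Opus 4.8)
Le plan est de montrer que les deux suites spectrales proviennent de deux filtrations d'un seul et m\^eme objet $K=\derR\Hom(M(X),\E)$ de $\Der(\ab)$, reli\'ees par le foncteur de \emph{d\'ecalage} de Deligne (\cite{Del}), en suivant le mod\`ele de \cite{GS} et \cite{Par}. L'id\'ee directrice est que le d\'ecalage produit pr\'ecis\'ement le glissement d'une page qui, combin\'e au renum\'erotage $E_{2,t}^{p,q}={}'E_{1,t}^{-q,p+2q}$ effectu\'e en \ref{2ssp}, fera co\"incider les deux suites spectrales \`a partir de la page $2$.

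Je commencerais par traduire les deux octa\`edres de \ref{2ssp} dans le langage des complexes filtr\'es, gr\^ace \`a la discussion de \ref{num:ssp}: en appliquant $\Hom(M(X),-)$ \`a la tour de Postnikov $(\tau_{\leq -p}\E)_p$ pour la t-structure homotopique, on obtient une filtration \emph{homotopique} $F_t^\ast K$; en appliquant $\derR\Hom(-,\E)$ au syst\`eme des drapeaux $Z^\ast\in\Drap(X)$, on obtient la filtration \emph{par coniveau} $F_c^\ast K=\ilim{Z^\ast\in\Drap(X)}\derR\Hom(M(X/X-Z^p),\E)$. Le point d\'elicat de cette \'etape, qu'il faudra traiter avec soin, est de relever ces deux filtrations au niveau d'un complexe filtr\'e v\'eritable (et non seulement d'objets de $\Der(\ab)$), afin que le foncteur $\mathrm{Dec}$ ait un sens: le c\^ot\'e coniveau est d\'ej\`a repr\'esent\'e par le complexe de Gersten $C^\ast(X,\umH^q_\ast(\E))_0$, tandis que le c\^ot\'e homotopique l'est par la troncation canonique.

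Le c\oe ur de la d\'emonstration, et sa difficult\'e principale, est l'identification filtr\'ee $F_t^\ast\simeq\mathrm{Dec}(F_c^\ast)$. L'ingr\'edient g\'eom\'etrique essentiel est le calcul des gradu\'es: les isomorphismes de puret\'e de \cite{Deg5} et le morphisme de Gysin de \cite{Deg6} identifient $\mathrm{gr}_c^p K$ au complexe de Gersten, et la proposition \ref{iso_coh_gpe_chow} en calcule la cohomologie comme $H^p(X,\uH^q_0\E)$. Il faudra v\'erifier que, sous cette identification, les morphismes de liaison des deux octa\`edres se correspondent, c'est-\`a-dire que les r\'esidus $\partial_Z^U$ du c\^ot\'e coniveau co\"incident avec les morphismes de liaison provenant des triangles de la tour de Postnikov du c\^ot\'e homotopique; c'est exactement ce que fournit le lemme \ref{lm:comp_Gersten_loc}, qui relie le r\'esidu de Gersten au triangle de Gysin, une fois v\'erifi\'ee sa compatibilit\'e avec la structure triangul\'ee. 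La naturalit\'e de \ref{iso_coh_gpe_chow} par rapport aux transferts et aux morphismes de Gysin est ici indispensable pour que la comparaison soit un v\'eritable morphisme de complexes filtr\'es, et non un simple isomorphisme abstrait des pages $E_2$.

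Une fois cette identification \'etablie, je conclurais comme suit. Le th\'eor\`eme de Deligne sur la filtration d\'ecal\'ee donne un isomorphisme canonique $E_r(F_t^\ast)\cong E_{r+1}(F_c^\ast)$ (convenablement r\'eindex\'e) pour $r\geq 1$; comme le renum\'erotage de \ref{2ssp} identifie $E_{r,t}^{p,q}$ \`a la page $r-1$ de la suite spectrale naturelle de $F_t^\ast$, on en d\'eduit $E_{r,t}^{p,q}\cong E_{r,c}^{p,q}$ pour tout $r\geq 2$, l'isomorphisme induit sur la page $2$ \'etant exactement $\varphi_2^{p,q}$. Comme cette identification provient d'un morphisme de complexes filtr\'es, sa compatibilit\'e aux diff\'erentielles $d_r$ pour tout $r\geq 2$ est alors automatique par le th\'eor\`eme de comparaison des suites spectrales (\cite[2.3]{McC}): un morphisme qui est un isomorphisme sur une page l'est sur toutes les suivantes. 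Ceci \'etablit simultan\'ement la compatibilit\'e de $\varphi_2$ aux diff\'erentielles et sa propagation \`a tout $r\geq 2$.
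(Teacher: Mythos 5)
Votre strat\'egie g\'en\'erale --- comparer les deux suites spectrales via le foncteur de d\'ecalage de Deligne, avec la r\'esolution de Gersten et la proposition \ref{iso_coh_gpe_chow} comme ingr\'edients de calcul --- est bien celle de l'article, et votre derni\`ere \'etape (propagation automatique aux pages $r\geq 2$ d\`es qu'on a un morphisme de complexes filtr\'es) est correcte. Mais votre \'etape centrale comporte une lacune r\'eelle : vous voulez \'etablir l'identification filtr\'ee $F_t^\ast \simeq \mathrm{Dec}(F_c^\ast)$ en calculant les gradu\'es et en v\'erifiant la correspondance des morphismes de liaison. Or un isomorphisme d'objets filtr\'es ne se reconstitue pas \`a partir d'isomorphismes des gradu\'es compatibles aux liaisons : il faut d'abord disposer d'un \emph{morphisme} entre les deux objets filtr\'es, et rien dans votre plan n'en produit un. C'est pr\'ecis\'ement la difficult\'e que l'article contourne : il n'identifie jamais directement $F_t$ \`a $\mathrm{Dec}(F_c)$ ; il part du morphisme \emph{tautologique} de complexes filtr\'es $\varphi'\colon (I,F_{triv}) \to (I,F_c)$ (l'identit\'e de $I$, o\`u $F_{triv}$ est la filtration triviale), applique le foncteur $\mathrm{Dec}$ --- qui est fonctoriel --- et utilise le fait que $\mathrm{Dec}(F_{triv})$ \emph{est} la filtration canonique (\cite[1.4.6]{Del}), donc calcule la suite spectrale de la t-structure. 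La r\'esolution de Gersten montre alors que $\varphi'$ induit un quasi-isomorphisme sur les lignes de la page $E_1$, et \cite[1.3.15]{Del} en d\'eduit que $\varphi''=\mathrm{Dec}(\varphi')$ est un quasi-isomorphisme filtr\'e, d'o\`u l'isomorphisme des suites spectrales \`a partir de la page $2$.

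Deuxi\`eme probl\`eme, li\'e au premier : votre rel\`evement des deux filtrations en de v\'eritables complexes filtr\'es dans $\Der(\ab)$ ne fonctionne pas tel quel. La troncation canonique d'un complexe de groupes ab\'eliens repr\'esentant $K=\derR\Hom(M(X),\E)$ donne une suite spectrale d\'eg\'en\'er\'ee, et non la suite spectrale d'hypercohomologie $E_{2,t}^{p,q}=H^p(X,\uH^q_0\E)$ : celle-ci exige de tronquer $\E$ dans $\DM$ \emph{avant} d'appliquer $\Hom(M(X),-)$, ce qui ne fournit qu'une tour dans $\Der(\ab)$ et non une filtration d'un complexe par des sous-complexes ; quant au complexe de Gersten, c'est la page $E_1$ du coniveau, pas un complexe filtr\'e repr\'esentant $K$. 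C'est pour r\'esoudre ce point que l'article travaille sur le petit site Nisnevich $X_\nis$ avec une r\'esolution de Cartan--Eilenberg $I$ de $\E_0$ restreint \`a $X_\nis$ (apr\`es r\'eduction au cas o\`u $\E$ est born\'e inf\'erieurement) : l\`a, la filtration par coniveau se r\'ealise par les noyaux $F^p_c(I)=\Ker\big(I \to \uHom(\ZZ_X(X-Z^{p+1}),I)\big)$, la filtration de la t-structure devient la filtration canonique du complexe de faisceaux $I$, c'est-\`a-dire $\mathrm{Dec}(F_{triv})$, et le morphisme de comparaison existe gratuitement. Sans ce dispositif (ou un substitut \'equivalent), votre d\'emonstration ne peut pas aboutir.
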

\begin{proof}
Puisque les deux suites spectrales sont concentrées en degrés
 $0 \leq p \leq \dim X$, on peut supposer que $E$
 est borné inférieurement
 pour la t-structure homotopique.

Soit $X_\nis$ le site des $X$-schémas étales muni de la topologie
de Nisnevich. Soit $\tilde X_\nis$ la catégorie des
faisceaux abéliens sur $X_\nis$.
Si $V/X$ est un schéma étale, on note $\ZZ_X(V)$
 le faisceau de groupes abéliens libres sur $X_\nis$
 représenté par $V$. \\
Soit $K$ la restriction de $\E_0$ à $X_\nis$. 
Par hypothèse, $K$ est borné inférieurement. Il existe
donc une résolution de Cartan-Eilenberg $I \rightarrow K$
où $I$ est un complexe quasi-isomorphe à $K$, dont les composantes
sont des objets injectifs de $\tilde X_\nis$.
Ainsi, pour tout $X$-schéma étale $V$,
 on dispose d'un isomorphisme canonique:
\begin{equation}\label{IetE}
\Hom_{\Der(\tilde X_\nis)}(\ZZ_X(V),I[n])
 \rightarrow \Hom_{\DM}(M(V),\E[n]).
\end{equation}

Puisque le foncteur de restriction à $X_\nis$ est exact,
la suite spectrale $E_{2,t}^{p,q}$ est canoniquement
isomorphe à la suite spectrale d'hypercohomologie Nisnevich 
du complexe $I$ -- \emph{i.e.} associée au foncteur cohomologique
$\derR \Gamma$, foncteur dérivé du foncteur sections globales. 


Soit $(Z^p)_{p \in \NN}$ un drapeau de $X$. On obtient alors un diagramme
du type \eqref{proto_ce} dans la catégorie $\tilde X_\nis$:
\newcommand{\repz}[1]{\ZZ_X(#1)}
$$
\xymatrix@C=-10pt@R=20pt{
& \repz{X-Z^p}\ar@{^(->}^{f_{p*}}[rr]\ar@{^(->}_/3pt/{i_{p*}}[ld]
 && \repz{X-Z^{p+1}}\ar@{^(->}^/3pt/{i_{p+1*}}[rd]\ar@{->>}^/-10pt/{\pi_p}[ld] & \\
\repz X\ar@{->>}_/-6pt/{k_{p}}[rd]
 && \repz{X-Z^{p+1}/X-Z^p}\ar@{^(->}_/6pt/{\tilde \pi_p}[ld]
 && \repz X\ar@{->>}^/-6pt/{k_{p+1}}[ld] \\
& \repz{X/X-Z^p}\ar@{->>}_{\tilde f_p}[rr] & & \repz{X/X-Z^{p+1}}
}
$$
Ce digramme donne lieu à son tour à une octaèdre (du type \eqref{octaedre})
 dans la catégorie dérivée $\Der(\tilde X_\nis)$.
D'après \eqref{IetE}, la suite spectrale associée à ce dernier
diagramme pour le foncteur cohomologique $\Hom(.,I)$ est canoniquement
isomorphe à la suite spectrale $E_{1,c}^{p,q}$.
Notons $F^p_c(I)$ le noyau de l'épimorphisme
$$
I=\uHom(\ZZ_X(X),I) \xrightarrow{i_p^*} \uHom(\ZZ_X(X-Z^{p+1}),I)
$$
et posons $G^p=F^p_c(I)/F^{p+1}_c(I)$.
On obtient alors un octaèdre dans $\Der(X_\nis)$:
$$
\xymatrix@C=10pt@R=24pt{
& F^{p+1}_c(I)\ar[rr]\ar[ld]
 && F^p_c(I)\ar[rd]\ar[ld] & \\
I\ar|/-12pt/{i^*_{p+1}}[rd]\ar@{}|{*}[r]
 && G^p\ar[ld]\ar_/-5pt/{_{+1}}[lu]
      \ar@{}|{*}[u]\ar@{}|{*}[d]
 && I\ar|/-12pt/{i^*_{p}}[ld]\ar@{}|{*}[l] \\
& \uHom(\ZZ_X(X-Z^{p+1}),I)\ar_{f_p^*}[rr]\ar^/8pt/{_{+1}}[uu]
 && \uHom(\ZZ_X(X-Z^{p}),I)\ar_/8pt/{_{+1}}[uu]\ar^/8pt/{_{+1}}[lu]
}
$$
qui montre que la suite spectrale d'hypercohomologie du complexe
filtré $(I,F^p_c)$ est canoniquement isomorphisme à $E_{1,c}^{p,q}$.
Ainsi, on peut calculer le $(p+q)$-ème faisceau de cohomologie du 
complexe $G^p$:
\begin{equation*}
\uH^{p+q}(G^p)=C^p(.,\umH^q_*(\E))_0
\end{equation*}
où le complexe de cycles à coefficients dans $\umH^q(\E)$
est vu comme un faisceau Nisnevich sur $X_\nis$ -- rappelons
en effet que ce dernier est fonctoriel par rapport aux morphismes
étales.
Si l'on considère $\cE^{p,q}_{1,c}$ la suite spectrale du
complexe filtrée $(I,F^p_c)$ dans la catégorie abélienne
$\tilde X_\nis$, on obtient même un isomorphisme de complexes
de faisceaux:
\begin{equation}  \label{pf:calcul_grad}
\cE^{*,q}_{1,c} = C^*(.,\umH^q_*(\E))_0.
\end{equation}

Notons $F_{triv}(I)$ la filtration triviale sur $I$:
$$
F_{triv}^p(I)=\begin{cases}
I & \text{si } p<0 \\
0 & \text{sinon,}
\end{cases}
$$
et $\cE^{p,q}_{r,triv}$ la suite spectrale
 dans $\tilde X_{nis}$ qui lui est associée.
Evidemment, $\cE^{*,q}_{1,triv}$
 est un complexe concentré en degré $0$ 
 égal au faisceau $\uH^q(I)$.
 
On peut alors considérer le morphisme canonique
de complexes filtrés:
$$
\varphi':(I,F_{triv}) \rightarrow (I,F_c).
$$

Il résulte du calcul \eqref{pf:calcul_grad}
que le morphisme de complexes de faisceaux induit par $\varphi'$
$$
\cE_{1,triv}^{*,q} \rightarrow \cE_{1,c}^{*,q}
$$
est un quasi-isomorphisme pour tout $q \in \ZZ$.
Evalué en un $X$-schéma étale $V$, ce quasi-isomorphisme correspond au
morphisme d'augmentation canonique
$$
\Gamma(V,\uH^q_0(\E)) \rightarrow C^*(V,\umH^q_*(\E))_0.
$$
Il en résulte que le morphisme induit sur les termes de la
deuxième page
\begin{equation}  \label{pf:ident_ssps}
\cE_{2,triv}^{p,q} \rightarrow \cE_{2,c}^{p,q}
\end{equation}
est le morphisme nul si $p \neq 0$, et correspond
pour $p=0$ à l'isomorphisme canonique de la proposition
\ref{iso_coh_gpe_chow}
$$
\uH^q_0(\E) \rightarrow A^0(.,\umH^q_*(\E))_0.
$$
Soit $Dec$ le foncteur de \og décalage de la filtration \fg~définit
dans \cite[1.3.3]{Del}. Le morphisme $\varphi'$ induit donc
un morphisme entre les filtrations décalées:
$$
\varphi'':(I,Dec(F_{triv})) \rightarrow (I,Dec(F_c))
$$
Notons que d'après \cite[1.4.6]{Del}, $Dec(F_{triv})$
 est la filtration \emph{canonique} sur $I$ --
 qui correspond à la filtration pour la t-structure
 homotopique sur $\E$ d'après le choix de $I$.
Il résulte de \cite[1.3.15]{Del} et du calcul précédent
que $\varphi''$ est un quasi-isomorphisme de complexes
filtrés. Il induit donc un isomorphisme au niveau des
couples exacts associés dans la catégorie $D(\tilde X_{\nis})$
et a fortiori un isomorphisme de suite spectrales 
d'hypercohomologies. L'isomorphisme ainsi obtenu sur la première page 
des suites spectrales (\emph{cf.} \cite[1.3.4]{Del})
est de la forme
$$
(\varphi'')_*^{p,q}:E_{2,t}^{p,q} \rightarrow E_{2,c}^{p,q}.
$$
Compte tenu de l'identification de l'isomorphisme \eqref{pf:ident_ssps}
 obtenue ci-dessus, $(\varphi'')_*^{p,q}$ s'identifie à $\varphi_2^{p,q}$
 ce qui permet de conclure.
\end{proof}

\num \label{fct_coniv}
Notons $E_{r,c}^{p,q}(X,\E)$ la suite spectrale du coniveau associée
 au schéma lisse $X$ et au motif $\E$ comme ci-dessus.
Le corollaire principal de la proposition précédente est la fonctorialité
 de la suite spectrale du coniveau. 
Les cas les plus importants sont la compatibilité
 au pullback, pushout et action de la cohomologie motivique:
\begin{itemize}
 \item Un morphisme, ou même une correspondance finie,
 $f:Y \rightarrow X$ entre schémas lisses induit un morphisme
 de suites spectrales:
$$
f^*:E_{r,c}^{p,q}(X,\E) \rightarrow E_{r,c}^{p,q}(Y,\E)
$$
qui converge vers le morphisme $f^*:H^{p+q}(X,\E) \rightarrow H^{p+q}(Y,\E)$.
\item Soit $f:Y \rightarrow X$ un morphisme projectif de dimension relative pure $n$
 entre schémas lisses. Rappelons que l'on associe à $f$ un morphisme de Gysin
$f^*:M(X)(n)[2n] \rightarrow M(Y)$ (\emph{cf.} \cite[2.7]{Deg6}).
On en déduit un morphisme de suites spectrales:
$$
f_*:E_{r,c}^{p,q}(Y,\E) \rightarrow E_{r,c}^{p-2n,q}(X,\E(-n))
$$
qui converge vers le morphisme $f_*:H^{p+q}(Y,\E) \rightarrow H^{p+q-2n}(X,\E(-n))$.
\item Considérons une classe $x \in H_\cM^{i,n}(X)$ dans la cohomologie motivique
 de $X$. On en déduit un morphisme
$$
\gamma_x:E_{r,c}^{p,q}(X,\E) \rightarrow E_{r,c}^{p+i,q}(X,\E(n))
$$
qui converge vers $\gamma_x:H^{p+q}(X,\E) \rightarrow H^{p+q+i}(X,\E(n))$.
\end{itemize}

\begin{num}
Fixons un corps $K$ de caractéristique $0$.
Si $V$ est un $K$-espace vectoriel, on note $V^\vee$ le dual de $V$. \\
Considérons une théorie de Weil mixte $E$ à coefficients dans $K$,
 au sens de \cite{CD1}.
Rappelons qu'il s'agit d'un préfaisceau en $K$-algèbres différentielles
graduées sur la catégorie des $k$-schémas affines lisses
dont
l'hypercohomologie Nisnevich peut être étendue en un foncteur
covariant monoïdal
$$
R_E:\DM \rightarrow \Der(K).
$$
Plus précisément,
 on obtient avec ces notations, pour tout schéma lisse $X$ et tout entier $p \in \ZZ$, 
$$
H^p(X,E)=H^p\big( R_E(M(X))^\vee \big).
$$
Par ailleurs, on associe à $E$ un spectre motivique $\E$
(\emph{cf.} \cite[2.7.6, 2.7.9]{CD1}) tel que:
$$
H^p(X,E)=\Hom_{\DM}(M(X),\E[p]).\footnote{
Avec ces notations, $R_E(\F)=\derR \Hom_{\DM}(\un,\E \otimes \F)$.}
$$
Notons que le faisceau avec transferts $\E_0$ s'identifie,
 après oubli des transferts,
  au faisceau $E_\nis$ associé au préfaisceau $E$. \\

Pour tout entier $n \geq 0$, on pose $K(n)=H^1_\nis(\GG,E)^{\otimes n}$, $K(-n)=K(n)^\vee$
-- par définition de $E$, ces espaces vectoriels sont de dimension $1$.
Pour tout $K$-espace vectoriel $V$, on pose $V(\pm n)=V \otimes K(\pm n)$.
L'isomorphisme canonique ci-dessus s'étend avec ces notations:
$$
H^p(X,E)(n)=\Hom_{\DM}(M(X),\E(n)[p]).\footnote{Ainsi, le spectre $\E$
 est \og $\un(1)$-périodique \fg: il existe un isomorphisme (non canonique)
 $E\simeq E(1)$.}
$$

On en déduit donc la suite spectrale du coniveau à coefficiens dans $\E$:
\begin{equation} \label{eq:coniveau_MWC}
E_{1,c}^{p,q}=\bigoplus_{x \in X^{(p)}} H^{q-p}(\kappa(x),E)(-p)
 \Rightarrow H^{p+q}(X,E).
\end{equation}
D'après le théorème précédent, cette suite spectrale est canoniquement
isomorphe -- à partir du terme $E_2$ -- à la suite spectrale 
d'hyper-cohomologie pour la t-structure homotopique sur $\DM$:
\begin{equation} \label{eq:htp_MWC}
E_{2,t}^{p,q}=H^p(X,\uH^q_0(\E)) \Rightarrow H^{p+q}(X,E).
\end{equation}
Il en résulte que la filtration par coniveau sur $H^*(X,E)$
 coincide avec la filtration donnée par la t-structure homotopique
 relativement à $\E$.

Ce résultat est à comparer avec la proposition (6.4) de \cite{BO},
 d'autant plus que d'après la démonstration de \ref{thm:comparaison_ssp},
 la suite spectrale \eqref{eq:htp_MWC} s'identifie à la suite spectrale
 d'hypercohomologie Nisnevich associée au complexe $E_\nis$ sur le site
 $X_\nis$.
Le faisceau $\uH^q_0(\E)$ s'identifie avec le faisceau Nisnevich
 associé au préfaisceau
$$
\upH^q_0(E):X \mapsto H^q(X,E).
$$
Comme ce dernier est un préfaisceau invariant par homotopie avec transferts,
 $\uH^q_0(\E)$ s'identifie encore au faisceau Zariski associé
 à $\upH^q_0(\E)$ (\emph{cf.} \cite[4.4.16]{Deg3}) -- 
il coincide donc avec le faisceau noté $\mathcal H^q$ dans \cite{BO} une
fois oublié les transferts.

Comme $E$ est sans torsion,
 il résulte de \cite[5.24]{V2} que $\uH^q_0(\E)$ est un faisceau étale.
De plus, d'après \cite[5.7, 5.28]{V2},
$$
H^p(X,\uH^q_0(\E))=H^p_\zar(X,\uH^q_0(\E))=H^p_{\et}(X,\uH^q_0(\E)).
$$
On peut démontrer de plus que la suite spectrale \eqref{eq:htp_MWC}
 coincide avec la suite spectrale d'hyper-cohomologie étale (resp. Zariski)
 associé au complexe $E_\nis$. Pour la topologie étale,
 cela résulte directement de l'équivalence
$$
DM_-^{eff}(k) \otimes \QQ \xrightarrow{\ \sim \ }
 DM_{-,\et}^{eff}(k) \otimes \QQ
$$
prouvé par Voevodsky (\emph{cf.} \cite[3.3.2]{V1}).
\end{num}

Pour résumer\footnote{Signalons que le premier point de cette proposition
 a été obtenu dans \cite{Deg6}.}:
\begin{cor}
Soit $E$ une théorie de Weil mixte à coefficients dans $K$,
 $E_\nis$ le faisceau Nisnevich
 et $\E$ le spectre motivique qui lui sont associés.

\begin{enumerate}
\item
Pour tout schéma lisse $X$, la filtration par coniveau sur $X$
 induit une suite spectrale convergente
$$
E_{1,c}^{p,q}(X,\E) \Rightarrow H^{p+q}(X,E)
$$
dont le complexe sur la ligne $q$ est $E_{1,c}^{*,q}(X,\E)=C^*(X,\umH^q_* \E)_0$.
\item Cette suite spectrale s'identifie à partir du terme $E_2$
 avec la suite spectrale \eqref{eq:htp_MWC}
 induite par la filtration sur $\E$ donnée par
 la t-structure homotopique de $\DM$.
\item Elle s'identifie encore avec les suites spectrales
 d'hyper-cohomologie Nisnevich et étale de $X$ à coefficients
 dans $E_\nis$.
\end{enumerate}
\end{cor}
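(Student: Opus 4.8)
Les trois points r\'esultent essentiellement d'un assemblage des r\'esultats \'etablis ci-dessus; indiquons comment les articuler.

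Pour le point (1), le plan est d'appliquer la construction de la seconde suite spectrale de \ref{2ssp} au spectre motivique $\E$ associ\'e \`a la th\'eorie de Weil mixte $E$. La forme du premier terme, et en particulier l'identification du complexe sur la ligne $q$ avec $C^*(X,\umH^q_*(\E))_0$, est exactement celle rappel\'ee en \ref{2ssp} d'apr\`es \cite[4.5]{Deg5}. La convergence ne pose pas de difficult\'e: tout drapeau de $X$ \'etant de longueur born\'ee par $\dim X$, la filtration correspondante est finie.

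Le point (2) est alors une application directe du th\'eor\`eme \ref{thm:comparaison_ssp}, qui fournit pr\'ecis\'ement, \`a partir du terme $E_2$, un isomorphisme canonique de suites spectrales entre \eqref{eq:coniveau_MWC} et la suite spectrale d'hypercohomologie \eqref{eq:htp_MWC} pour la t-structure homotopique, compatible aux diff\'erentielles.

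Le point (3) constitue le coeur de l'\'enonc\'e. La premi\`ere \'etape est d'identifier le module homotopique $\uH^q_0(\E)$, apr\`es oubli des transferts, avec le faisceau Nisnevich associ\'e au pr\'efaisceau $X \mapsto H^q(X,E)$; ce dernier \'etant invariant par homotopie avec transferts, le faisceau Nisnevich co\"incide avec le faisceau Zariski associ\'e. Que \eqref{eq:htp_MWC} s'identifie \`a la suite spectrale d'hypercohomologie Nisnevich du complexe $E_\nis$ r\'esulte directement de la d\'emonstration de \ref{thm:comparaison_ssp}, o\`u la suite spectrale $E_{2,t}$ est construite comme celle associ\'ee au foncteur $\derR \Gamma$ sur le site $X_\nis$. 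Pour le cas \'etale, j'exploiterais le fait que $E$ est sans torsion: d'apr\`es \cite[5.24]{V2} le faisceau $\uH^q_0(\E)$ est alors \'etale, et d'apr\`es \cite[5.7, 5.28]{V2} ses cohomologies Zariski, Nisnevich et \'etale co\"incident. Le point d\'elicat, qui est le principal obstacle, est de promouvoir cette co\"incidence terme \`a terme en une identification des suites spectrales enti\`eres: c'est ici qu'intervient l'\'equivalence rationnelle $\Vdmme \otimes \QQ \xrightarrow{\ \sim\ } DM_{-,\et}^{eff}(k) \otimes \QQ$ de Voevodsky (\cite[3.3.2]{V1}), qui permet de comparer les r\'ealisations cohomologiques sous-jacentes pour les topologies Nisnevich et \'etale, et donc les octa\`edres \eqref{octaedre} engendrant les deux suites spectrales. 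La compatibilit\'e de cet argument de descente avec la filtration par coniveau est assur\'ee par le caract\`ere fonctoriel de toute la construction, via le syst\`eme de Rees de \ref{num:ssp}.
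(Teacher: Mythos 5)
Votre d\'emonstration est correcte et suit essentiellement la m\^eme voie que le texte: le point (1) par la construction de la suite spectrale du coniveau de \ref{2ssp} et \cite[4.5]{Deg5}, le point (2) par le th\'eor\`eme \ref{thm:comparaison_ssp}, et le point (3) par l'identification de $\uH^q_0(\E)$ au faisceau (Nisnevich, puis Zariski) associ\'e \`a $X \mapsto H^q(X,E)$, les r\'esultats de \cite{V2} sur l'absence de torsion, et l'\'equivalence rationnelle $\Vdmme \otimes \QQ \simeq DM_{-,\et}^{eff}(k) \otimes \QQ$ de Voevodsky pour la comparaison \'etale. Le texte se contente d'ailleurs du m\^eme niveau d'esquisse pour ce dernier point, de sorte qu'aucun \'ecart substantiel n'est \`a signaler.
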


\part{Applications et compléments} 

\section{Morphismes de Gysin et correspondances}

Soit $f:Y \rightarrow X$ un morphisme fini équidimensionnel entre schémas lisses.
Suivant \cite[2.7]{Deg6}, on associe à $f$ un morphisme de Gysin $f^*:M(X) \rightarrow M(Y)$
 dans $\DM$.
Par ailleurs, la transposée du graphe de $f$ définit une correspondance finie 
 $\tra f:X \doto Y$, qui a son tour induit un morphisme $\tra f_*:M(X) \rightarrow M(Y)$
 dans $\DM$.
Dans \cite[2.13]{Deg6}, on a montré que $f^*=\tra f_*$ si $f$ est étale.
On obtient ici la généralisation dans le cas ramifié:
\begin{prop}\label{prop:Gysin&tanspose}
Avec les notations ci-dessus, $f^*=\tra f_*$.
\end{prop}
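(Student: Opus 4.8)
Le plan est de d\'emontrer l'\'egalit\'e de $f^*$ et $\tra f_*$, tous deux vus comme morphismes $M(X)\to M(Y)$ dans $\DM$, en les testant contre les foncteurs de r\'ealisation cohomologique attach\'es aux modules homotopiques. Comme la t-structure homotopique sur $\DM$ est non d\'eg\'en\'er\'ee --- la famille de foncteurs cohomologiques $(\ugH{*}^{m})_{m\in\ZZ}$ \'etant conservative (\emph{cf.} le lemme qui suit \ref{notation_H_DM}) --- et comme $M(X)$ et $M(Y)$ sont born\'es pour cette t-structure, on peut esp\'erer que la famille des foncteurs repr\'esentables $\Hom_{\DM}(-,F_*[n])$, pour $F_*$ parcourant $\hmtr$ et $n\in\ZZ$, d\'etecte l'\'egalit\'e des deux fl\`eches. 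Il suffirait alors de v\'erifier que, pour tout module homotopique $F_*$ de foncteur de r\'ealisation associ\'e $\varphi$ (\emph{cf.} \ref{realisation_mhtp}), les applications $\varphi(f^*)$ et $\varphi(\tra f_*)$ co\"{\i}ncident. Par additivit\'e en $Y$, on se ram\`ene au cas o\`u $X$ et $Y$ sont connexes; le morphisme $f$ est alors fini surjectif (et m\^eme plat, $X$ et $Y$ \'etant r\'eguliers de m\^eme dimension).

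Posons $\phi=\hat F_*$. D'apr\`es \ref{iso_coh_gpe_chow} et le calcul de la section \ref{realisation_mhtp}, le foncteur $\varphi$ prolonge $A^*(-;\phi)$, et cette identification est compatible aux transferts en la variable lisse. Comme $\varphi$ est contravariant, $\varphi(\tra f_*)$ s'identifie ainsi au transfert $(\tra f)^*\colon A^*(Y;\phi)\to A^*(X;\phi)$ induit par la correspondance finie $\tra f\in c(X,Y)$. Or, ainsi qu'on l'a d\'ej\`a exploit\'e dans la d\'emonstration du th\'eor\`eme \ref{thm:main} (compatibilit\'e \`a la donn\'ee (D2)), la transpos\'ee d'un morphisme fini surjectif entre sch\'emas lisses induit sur les groupes de Chow \`a coefficients exactement le pouss\'e en avant propre~: $(\tra f)^*=f_*$ (\emph{cf.} \cite[6.6]{Deg5}).

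Il reste \`a identifier la r\'ealisation du morphisme de Gysin. Puisque $f$ est fini \'equidimensionnel, il est projectif de dimension relative constante $d=0$, de sorte que la proposition \ref{Gysin&pushout} s'applique directement et montre que $\varphi(f^*)$ s'identifie, via les isomorphismes $\epsilon$, au \emph{m\^eme} pouss\'e en avant propre $f_*\colon A^*(Y;\phi)\to A^*(X;\phi)$. On obtiendrait ainsi $\varphi(f^*)=\varphi(\tra f_*)$ pour tout $F_*$, d'o\`u $f^*=\tra f_*$.

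Le point le plus d\'elicat est la r\'eduction initiale~: il faut garantir que la famille des foncteurs de r\'ealisation d\'etecte l'\'egalit\'e de deux morphismes entre les motifs born\'es $M(X)$ et $M(Y)$, ce qui exige d'exploiter la non-d\'eg\'en\'erescence de la t-structure homotopique, vraisemblablement par une r\'ecurrence sur la tour de Postnikov de $M(Y)$. Il faut aussi s'assurer que les conventions de degr\'e et d'orientation de la proposition \ref{Gysin&pushout} s'accordent avec celles du transfert de $\tra f$, les deux applications r\'ealis\'ees allant bien de $A^*(Y;\phi)$ vers $A^*(X;\phi)$ puisque $\varphi$ est contravariant. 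Si cette d\'etection s'av\'erait d\'elicate \`a justifier, une alternative serait un argument g\'eom\'etrique direct, local sur $X$~: en utilisant la d\'ecomposition $\tra f_*=p_*\,i^*\,q^*$ (\emph{cf.} \ref{transferts_complexe_Rost}) et en se ramenant au cas d'un diviseur principal r\'egulier, on comparerait alors le morphisme de Gysin \`a la sp\'ecialisation gr\^ace \`a la proposition \ref{Gysin&specialisation}.
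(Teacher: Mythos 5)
Your proposal is correct and follows essentially the same route as the paper: the author sets $\alpha=f^*-\tra f_*$, reduces via Yoneda and the non-degeneracy of the homotopy t-structure to testing against objects of the heart $\hmtr$, and concludes from Proposition \ref{Gysin&pushout} (applied with $d=0$) together with formula \eqref{transferts_Chow} --- precisely your two identifications of $\varphi(f^*)$ and $\varphi(\tra f_*)$ with the proper pushforward $f_*$ on $A^*(Y;\phi)\to A^*(X;\phi)$. The ``delicate point'' you flag (that coefficients in the heart suffice to detect the equality) is dispatched in the paper by the same one-line appeal to non-degeneracy of the t-structure, so your write-up is, if anything, more explicit about that step.
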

\begin{proof}
On pose $\alpha=f^*-\tra f_*$ et on montre que $\alpha=0$.
Il suffit de montrer que pour tout $\E \in \DM$, le morphisme
$$
\Hom_{\DM}(M(Y),\E) \xrightarrow{\alpha^*} \Hom_{\DM}(M(X),\E)
$$
est nul.
Comme la t-structure homotopique sur $\DM$ est non dégénérée,
 il suffit de traiter le cas où $\E$ est dans le coeur homotopique.
On peut donc supposer que $\E$ est un module homotopique.
Ce cas résulte finalement de la proposition \ref{Gysin&pushout}
 et de la formule \eqref{transferts_Chow}.
\end{proof}

\section{Borne inférieure et constructibilité des modules de cycles}

On introduit l'hypothèse suivante sur le corps $k$:
\begin{enumerate}
\item[$(\cM_k)$]
Pour tout corps de fonctions $E/k$, il existe
 un $k$-schéma projectif lisse dont le corps des fonctions
 est $k$-isomorphe à $E$.
\end{enumerate}
Cette hypothèse est évidemment une conséquence de la résolution
 des singularités au sens classique pour $k$.

Le résultat suivant est bien connu\footnote{On obtient une
preuve très élégante en utilisant un argument dû à J. Riou
facilement adapté de la preuve de \cite[th. 1.4]{Riou}.}:
\begin{prop} \label{prop:eng_proj_lisse}
Soit $d$ un entier et $\sP_{\leq d}$
 la sous-catégorie triangulée de $\DM$ engendrée par les motifs
 de schémas projectifs lisses de dimension inférieure à $d$.

Soit $X$ un schéma lisse de dimension inférieure à $d$.
\begin{enumerate}
\item[(i)] Si $(\cM_k)$ est vérifiée,
 $M(X)$ appartient à $\sP_{\leq d}$.
\item[(ii)] Le motif rationel $M(X) \otimes \QQ$ appartient
 à $\sP_{\leq d} \otimes \QQ$.
\end{enumerate}
\end{prop}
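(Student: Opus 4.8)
Je prouverais les deux assertions simultanément, par récurrence sur $d$, à l'aide d'un même dévissage ; la seule différence entre (i) et (ii) réside dans l'ingrédient géométrique produisant un modèle projectif lisse. On se ramène d'emblée à $X$ connexe, donc intègre puisque lisse sur un corps parfait, de corps des fonctions $E$ ; le cas $d=0$ est immédiat, un $k$-schéma lisse de dimension nulle étant fini étale sur $k$, donc projectif lisse. Tout le reste consiste à exprimer $M(X)$ à partir de motifs de schémas projectifs lisses de dimension $\leq d$, en contrôlant soigneusement les dimensions et les twists de Tate ; je travaillerais avec la sous-catégorie \emph{épaisse} $\sP_{\leq d}$, stable par facteur direct, ce qui sera essentiel plus bas.

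Le cœur technique sera le calcul du \emph{motif à support} $M_Z(Y)=\mathrm{cone}\bigl(M(Y \setminus Z) \rightarrow M(Y)\bigr)$ pour $Y$ lisse de dimension $d$ et $Z \subset Y$ un fermé de dimension $<d$, éventuellement singulier. Comme $k$ est parfait, $Z_{\mathrm{red}}$ est génériquement lisse, et par récurrence noethérienne j'obtiendrais une filtration de $Z$ par des fermés dont les différences successives $S_i$ sont lisses. Les triangles de localisation déduits de l'axiome de l'octaèdre montrent alors que $M_Z(Y)$ est une extension itérée des motifs à support $M_{S_i}(\cdot) \simeq M(S_i)(c_i)[2c_i]$, ce dernier isomorphisme provenant du triangle de Gysin (\emph{cf.} \cite[2.3.1]{Deg5}), où $c_i=d-\dim S_i$. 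Pour rester dans $\sP_{\leq d}$ malgré le twist, j'utiliserais que $M(P)(c_i)[2c_i]$ est facteur direct de $M(P \times \PP^{c_i})$ via la décomposition $M(\PP^{c_i})=\bigoplus_{r=0}^{c_i}\un(r)[2r]$ : puisque $\dim(P \times \PP^{c_i}) \leq \dim S_i + c_i = d$, l'hypothèse de récurrence en dimension $<d$ donne $M(S_i) \in \sP_{\leq \dim S_i}$, d'où $M(S_i)(c_i)[2c_i] \in \sP_{\leq d}$ et finalement $M_Z(Y) \in \sP_{\leq d}$. C'est cette comptabilité entre codimensions et facteurs directs qui constitue le point le plus délicat de la preuve.

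Pour (i), l'hypothèse $(\cM_k)$ fournit un schéma projectif lisse $\bar X$ de dimension $d$ de corps des fonctions $E$, donc birationnel à $X$ ; $X$ et $\bar X$ partagent alors un ouvert dense commun $U$, de fermés complémentaires de dimension $<d$. Les deux triangles de localisation associés à $U \subset \bar X$ et $U \subset X$, combinés au dévissage ci-dessus et au fait que $M(\bar X) \in \sP_{\leq d}$, donneraient successivement $M(U)$ puis $M(X)$ dans $\sP_{\leq d}$. Pour (ii), je remplacerais $(\cM_k)$ par le théorème d'altérations de de~Jong, qui produit une altération $p:X' \rightarrow X$ de degré générique $n$ avec $X'$ lisse admettant une compactification projective lisse. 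Sur l'ouvert dense $V \subset X$ où $p$ est fini localement libre, la composée de la correspondance transposée $\tra p$ et de $p$ vaut $n$, de sorte que, rationnellement, $M(V)\otimes\QQ$ est facteur direct de $M(p^{-1}(V))\otimes\QQ$ ; comme $p^{-1}(V)$ est lisse de dimension $d$, ouvert dense de $X'$ donc birationnel à un projectif lisse, il relève du raisonnement de (i) à coefficients rationnels, et un dernier triangle de localisation traite le fermé $X \setminus V$. L'obstacle secondaire sera ici de contourner le fait qu'une altération n'est que génériquement finie, difficulté que règle précisément la restriction à l'ouvert $V$ où $p$ devient fini.
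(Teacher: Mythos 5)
Your proof is correct, and it is worth noting that on this point the paper itself gives no argument at all: it declares the result well known, with a footnote referring to an argument of J.~Riou adapted from the proof of \cite[th.~1.4]{Riou}. What you have written out --- induction on $d$; stratification of a closed subset $Z\subset Y$ of a smooth $d$-dimensional $Y$ into smooth locally closed pieces $S_i$ (legitimate since $k$ is perfect); the octahedron axiom making $M_Z(Y)$ an iterated extension of motives with support, identified by purity with $M(S_i)(c_i)[2c_i]$; absorption of the Tate twists via the summand $M(P)(c_i)[2c_i]$ of $M(P\times\PP^{c_i})$, whose dimension is still $\leq \dim S_i+c_i=d$; comparison of $X$ with a projective model through a common dense open for (i); de~Jong's alterations together with the transfer identity $p_*\circ\tra p_*=n\cdot\mathrm{id}_{M(V)}$ for (ii) --- is precisely the d\'evissage that such an argument performs, so your proposal in effect reconstructs the proof that the paper only cites. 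The dimension bookkeeping, the reduction of the transfer splitting to a dense open where the alteration becomes finite locally free, and the use of induction hypothesis (ii) rather than (i) in the rational case are all handled correctly.

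The one point to flag is your substitution of the thick (\'epaisse) subcategory for the \emph{sous-cat\'egorie triangul\'ee engendr\'ee} of the statement. For (i) this is actually avoidable: in the projective bundle decomposition the complementary summand $\bigoplus_{r<c_i}M(P)(r)[2r]$ lies in the category by induction on $c_i$, so a split triangle extracts $M(P)(c_i)[2c_i]$ without invoking any idempotent completion. For (ii) thickness is genuinely needed, since the complement of $M(V)\otimes\QQ$ inside $M(p^{-1}(V))\otimes\QQ$ is not identified; this thick reading is what the statement must mean there, and it suffices for the paper's purposes, because the test category $\C_{\leq d}$ in the proposition that follows is cut out by the vanishing of Hom groups and is therefore itself thick.
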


On en déduit le résultat suivant:
\begin{prop}
Soit $X$ un schéma de dimension $d$ et $(n,i) \in \ZZ^2$
 un couple d'entiers.
\begin{enumerate}
\item[(i)] Si $X$ est projectif lisse,
 $\hat h_{i,-n}(X)=0$ si $n>d$.
\item[(ii)] Si $(\cM_k)$ est vérifiée, 
 $\hat h_{i,-n}(X)=0$ si $n>d$.
\item[(iii)] Dans tous les cas,
 $\hat h_{i,-n}(X) \otimes \QQ=0$ si $n>d$.
\end{enumerate}
\end{prop}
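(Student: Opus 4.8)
Le plan est de ramener les trois \'enonc\'es \`a un seul calcul de cohomologie motivique en twist strictement n\'egatif, puis de propager l'annulation le long de la structure triangul\'ee gr\^ace aux r\'esultats d'engendrement de \ref{prop:eng_proj_lisse}. Rappelons que $\hat h_{i,*}(X)=\umH^{-i}_*(M(X))$, de sorte que pour tout corps de fonctions $L$,
$$
\hat h_{i,-n}(X).L=\ilim{A \subset L} \Hom_{\DM}\big(M(\spec A),M(X)\{-n\}[-i]\big),
$$
et qu'un module de cycles est nul si et seulement si toutes ses fibres le sont (via \ref{thm:main} et la conservativit\'e des foncteurs fibres \ref{foncteurs_fibres_fhtp}). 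Pour (i), je d\'ecomposerais d'abord $X$ en composantes connexes, ce qui d\'ecompose $\hat h_{i,*}(X)$ en somme directe et ram\`ene au cas o\`u $X$ est connexe de dimension $d'\leq d$. La formule \eqref{eq:rep_dual_cohm}, issue de la dualit\'e forte de $M(X)$, donne alors $\hat h_{i,-n}(X).L=H^{2d'+i-n,\,d'-n}_\cM(X_L)$; comme $n>d\geq d'$ le twist $d'-n$ est strictement n\'egatif, et la cohomologie motivique d'un sch\'ema lisse est nulle en twist strictement n\'egatif. Toutes les fibres s'annulent, d'o\`u (i).

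Pour (ii) et (iii), je fixerais $n>d$ et introduirais la sous-cat\'egorie pleine $\C$ de $\DM$ form\'ee des motifs $\E$ tels que $\umH^{j}_{-n}(\E)=0$ pour tout $j\in\ZZ$. Le twist $-n$ \'etant fix\'e et la condition portant sur tous les degr\'es cohomologiques, $\C$ est stable par d\'ecalage; comme $\umH^{j}_{-n}$ est un foncteur cohomologique \`a valeurs dans la cat\'egorie ab\'elienne $\modl$ (\emph{cf.} \ref{cor:motifs->modl}), la suite exacte longue montre que $\C$ est stable par c\^one, et elle est trivialement stable par facteur direct. Ainsi $\C$ est une sous-cat\'egorie triangul\'ee \'epaisse. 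D'apr\`es (i) appliqu\'e en dimension $\leq d$, tout motif $M(P)$ d'un sch\'ema projectif lisse $P$ de dimension $\leq d$ appartient \`a $\C$, donc $\sP_{\leq d}\subset\C$. Sous l'hypoth\`ese $(\cM_k)$, la proposition \ref{prop:eng_proj_lisse}(i) donne $M(X)\in\sP_{\leq d}$, d'o\`u $\hat h_{i,-n}(X)=\umH^{-i}_{-n}(M(X))=0$: c'est (ii).

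Pour (iii), j'observerais que les motifs $M(\spec A)$ ci-dessus sont compacts, \'etant des objets de $\DMgm$ (\emph{cf.} \ref{inclusion_DMgm_DM}); puisque $-\otimes\QQ$ commute aux limites inductives filtrantes et au foncteur $\Hom_{\DM}$ issu d'un objet compact, $\hat h_{i,-n}(X)\otimes\QQ$ se calcule comme transform\'ee g\'en\'erique du motif rationnel $M(X)\otimes\QQ$ dans $\DM\otimes\QQ$. On reprend alors l'argument pr\'ec\'edent dans $\DM\otimes\QQ$: la sous-cat\'egorie triangul\'ee \'epaisse d'annulation y contient les $M(P)\otimes\QQ$ d'apr\`es (i), donc $\sP_{\leq d}\otimes\QQ$, lequel contient $M(X)\otimes\QQ$ d'apr\`es \ref{prop:eng_proj_lisse}(ii); d'o\`u (iii). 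L'essentiel du travail n'est pas dans cette propagation triangul\'ee, qui est formelle, mais dans les deux ingr\'edients qu'elle mobilise: pour (i), la dualit\'e \eqref{eq:rep_dual_cohm} combin\'ee \`a l'annulation de la cohomologie motivique en twist n\'egatif; pour (iii), la compatibilit\'e de la transform\'ee g\'en\'erique \`a la rationalisation, qui repose sur la compacit\'e des motifs g\'en\'eriques.
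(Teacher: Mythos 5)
Your proof is correct and follows essentially the same route as the paper: point (i) via the duality formula \eqref{eq:rep_dual_cohm} together with the vanishing of motivic cohomology in strictly negative weight, and points (ii)--(iii) by propagating this vanishing through a thick triangulated subcategory of $\DM$ (resp. of $\DM \otimes \QQ$) which, by (i), contains the motives of smooth projective schemes of dimension at most $d$, and then invoking Proposition \ref{prop:eng_proj_lisse}. The only differences are expository: you make explicit the reduction to connected components in (i) and the compactness argument needed to commute $- \otimes \QQ$ past the Hom groups computing the fibers in (iii), both of which the paper leaves implicit.
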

\begin{proof}
Le point $(i)$ est un corollaire de la formule \eqref{eq:rep_dual_cohm}
 et du théorème de simplification de Voevodsky car ce dernier affirme
 qu'il n'y a pas de cohomologie motivique en poids strictement négatif.

Soit $\C_{\leq d}$ la sous-catégorie pleine de $\DM$ formée
 des motifs $\cM$ tel que pour tout corps de fonctions $E$
 et tout couple $(n,i) \in \ZZ^2$, $n>d$,
$$
\Hom_{\DM}(M(E),\cM\{-n\}[-i])=0.
$$
Cette catégorie est une sous-catégorie triangulée.
D'après (i), elle contient les motifs $M(P)$ pour $P$ projectif
lisse de dimension inférieure à $d$. La proposition précédente
permet donc de conclure.
\end{proof}

\begin{df} \label{df:constructible}
Nous dirons qu'un module homotopique 
(resp. module de cycles)
est \emph{constructible}
 s'il appartient à la sous-catégorie épaisse\footnote{\emph{i.e.} 
 stable par noyau, conoyau, extension, sous-objet et quotient.}
 de $\hmtr$ (resp. $\modl$)
 engendrée par les objets $\hStab h_i(X)\{n\}$ (resp. $\hat h_i(X)\{n\}$)
 pour un schéma lisse $X$
 et un couple d'entiers $(n,i) \in \ZZ^2$.
\end{df}

\rem 
\begin{enumerate}
\item Grâce à la t-structure homotopique,
 on peut considérer une autre condition de finitude sur les modules homotopiques.
 Un module homotopique $F_*$ est dit \emph{fortement constructible}
  s'il est de la forme
 $\uH_*^0(\E)$ pour un motif géométrique $\E$.\footnote{
  De même, un module de cycles est fortement constructible
   si le module homotopique associé l'est.}
 Dans ce cas, $F_*$ est constructible dans le sens précédent
 mais la réciproque n'est pas claire.
\item Les modules homotopiques constructibles ne jouissent pas des propriétés
 de finitude de leur analogue $l$-adique. 
Ainsi, il y a lieu de considérer parallèlement la notion plus forte 
 de module homotopique
 \emph{de type fini}\footnote{Cette notion, 
 introduite dans la thèse de l'auteur \cite{Deg1}, 
 a été étudiée indépendamment par J. Ayoub dans l'appendice de \cite{HKA}.}:
 $F_*$ est de type fini s'il existe un épimorphisme
  $\hStab h_0(X) \rightarrow F_*$.
Ces subtilités interviennent car le foncteur $\uH^0$ ne préserve pas
 la propriété d'être géométrique (\emph{i.e.} compact) -- contrairement
 à son analogue $l$-adique, 
 le foncteur cohomologique associé à la t-structure canonique,
 qui lui préserve la constructibilité.
\item Dans le prolongement de la remarque précédente,
 notons qu'il est probable que la plupart des modules homotopiques constructibles
 ne soient pas fortement dualisables. La seule exception que l'on connaisse
 à cette règle est le cas d'un $k$-schéma étale $X$ 
 et du module homotopique $\hStab h_0(X)$. 
 Ce dernier est fortement dualisable dans $\hmtr$ (ou même dans $\hftr$) 
 et il est son propre dual fort.
\end{enumerate}

\begin{cor}
La graduation canonique d'un module de cycles constructible $M$
 est bornée inférieurement dès que l'une des deux propriétés
 suivantes est réalisée:
\begin{itemize}
\item La propriété $(\cM_k)$ est satisfaite.
\item $M$ est sans torsion.
\end{itemize}
\end{cor}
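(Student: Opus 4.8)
The plan is to deduce the corollary from the vanishing proposition that immediately precedes it by a thick-subcategory argument. Concretely, I would introduce the full subcategory of $\modl$ whose objects are the cycle modules whose canonical grading is bounded below (i.e.\ $M_n=0$ for $n\ll 0$), show that it is thick in the sense of Definition~\ref{df:constructible}, and show that it contains every generator $\hat h_i(X)\{n\}$ under hypothesis $(\cM_k)$. Since the constructible cycle modules are by definition the smallest thick subcategory containing these generators, this settles the first case at once.

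The first step is the bookkeeping that boundedness below of the grading is stable under kernels, cokernels, subobjects, quotients and extensions. The key point is that all these operations in $\modl$ are computed gradewise and fibrewise: via the equivalence of Theorem~\ref{thm:main}, the inclusion $\hmtr\hookrightarrow \tatemod$ is exact and the fibre functors at a function field are exact (\emph{cf.} \ref{foncteurs_fibres_fhtp}), so each evaluation $M\mapsto M_n(E)$ is exact. Hence a subobject or quotient of a module supported in degrees $\geq n_0$ is again so, and an extension of modules bounded below by $n_0',n_0''$ is bounded below by $\min(n_0',n_0'')$; this gives the thickness. Now under $(\cM_k)$, part~(ii) of the preceding proposition reads $\hat h_{i,-n}(X)=0$ for $n>\dim X$, so each generator $\hat h_i(X)$ is bounded below, and the twist $\{n\}$ only shifts the grading. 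By thickness every constructible $M$ is then bounded below.

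For the torsion-free case I would rationalize. The functor $M\mapsto M\otimes\QQ$ is exact on $\modl$ and again computed gradewise, so it commutes with all the operations above; consequently the full subcategory of those $M$ for which $M\otimes\QQ$ is bounded below is thick as well. Part~(iii) of the preceding proposition, $\hat h_{i,-n}(X)\otimes\QQ=0$ for $n>\dim X$, shows it contains all generators, hence every constructible $M$. Thus for constructible $M$ one always has $(M\otimes\QQ)_n=M_n\otimes\QQ=0$ for $n\ll 0$. If moreover $M$ is torsion-free, each group $M_n(E)$ injects into $M_n(E)\otimes\QQ$, so $(M\otimes\QQ)_n=0$ forces $M_n=0$; therefore $M$ itself is bounded below.

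The main obstacle is precisely this last transfer. In the torsion-free case the integral generators need \emph{not} be bounded below (only their rationalizations are, by part~(iii)), so the argument is forced to detour through $\otimes\QQ$ and descend again using torsion-freeness; the two thickness verifications are what make this detour legitimate, and both rest on the gradewise exactness of the relevant functors on $\modl$. All the genuinely substantive vanishing has already been packaged into the preceding proposition (ultimately Voevodsky's cancellation theorem and Proposition~\ref{prop:eng_proj_lisse}), so the remaining work is this structural reduction rather than any further geometric input.
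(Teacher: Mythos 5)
Your proof is correct and follows exactly the route the paper intends: the corollary is stated there without any proof, as an immediate consequence of the preceding vanishing proposition and the definition of constructibility via thick subcategories. Your two thickness verifications --- the class of bounded-below cycle modules under $(\cM_k)$, and the class of $M$ with $M\otimes\QQ$ bounded below followed by the torsion-free descent $M_n\hookrightarrow M_n\otimes\QQ$ --- are precisely the details the author left implicit.
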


\section{Homologie de Borel-Moore}

On suppose $k$ de caractéristique $0$.
Dans \cite[par. 4]{V1}, Voevodsky associe à tout schéma
algébrique $X$ un motif à support compact $M^c(X)$ dans $\DMgm$
 vérifiant les propriétés suivantes:
\begin{enumerate}
\item[(C1)] Si $X$ est propre, $M^c(X)=M(X)$.
\item[(C2)] $M^c(X)$ est covariant par rapport aux morphismes
 propres, contravariant par rapport aux morphismes étales.
\item[(C3)] Si $i:Z\rightarrow X$ est une immersion fermée
 d'immersion ouverte complémentaire $j:U \rightarrow X$,
 il existe un triangle distingué canonique:
$$
M^c(Z) \xrightarrow{i_*} M^c(X) \xrightarrow{j^*} M^c(U)
 \xrightarrow{+1}
$$
\item[(C4)] Si $X$ est lisse de dimension pure $d$, $M(X)$
 est fortement dualisable avec pour dual fort $M^c(X)(-d)[-2d]$.
\end{enumerate}

Considérons un module de cycles $\phi$ ainsi que le module
 homotopique $F_*$ qui lui est associé d'après le théorème
 \ref{thm:main}.

Considérons un schéma lisse $X$ connexe de dimension $d$.
Pour un couple d'entier $(n,r) \in \ZZ$, on obtient les isomorphismes
suivants:
\begin{align*}
&A_{d-n}(X,\phi)_{r-d}=A^n(X,\phi)_r
 \stackrel{(1)}=H^n(X,F_r) \\
 &=\Hom_{\DM}(M(X),F_*\{r\}[n])
 \stackrel{(2)}=\Hom_{\DM}(\un[d-n],M^c(X)\{r-d\}).
\end{align*}
où $(1)$ résulte de \ref{iso_coh_gpe_chow}
 et $(2)$ de la propriété (C4). \\
Nous dirons qu'un schéma $X$ est lissifiable si il
 existe une immersion fermée de $X$ dans un schéma lisse.
Utilisant la propriété (C3) ci-dessus et la suite exacte longue
 de localisation pour les groupes de Chow à coefficients,
 on en déduit aisément:
\begin{cor}
Soit $\phi$ un module de cycles et $F_*$ le module homotopique
lui correspondant par l'équivalence de \ref{thm:main}.
Pour tout schéma lissifiable $X$ et tout couple $(i,s) \in \ZZ^2$,
 il existe un isomorphisme canonique\footnote{On démontre notamment qu'il ne dépend
  pas du plongement dans un schéma lisse.}:
$$
A_i(X,\phi)_s \simeq \Hom_{\DM}(\un[i],M^c(X) \otimes F_*\{s\}).
$$
\end{cor}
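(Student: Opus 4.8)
Le plan est de partir de l'isomorphisme \'etabli dans le pr\'eambule pour $X$ lisse connexe et de l'\'etendre \`a tout sch\'ema lissifiable par un d\'evissage sur la dimension. Comme le complexe de cycles ne d\'epend que des points de $X$ et que $M^c(X)=M^c(X_{red})$ en caract\'eristique nulle, on peut supposer $X$ r\'eduit. Pour un sch\'ema lissifiable $X$, notons
$$\theta_X:A_i(X,\phi)_s \rightarrow \Hom_{\DM}(\un[i],M^c(X) \otimes F_*\{s\})$$
l'application cherch\'ee. Dans le cas lisse connexe de dimension $d$, on a $A_i(X,\phi)_s=A^{d-i}(X,\phi)_{s+d}$ et la suite d'isomorphismes du pr\'eambule --- reposant sur \ref{iso_coh_gpe_chow} et sur la dualit\'e forte (C4) --- fournit $\theta_X$, qui est un isomorphisme.

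Il faut d'abord v\'erifier que $\theta$ est naturel pour les deux types de morphismes intervenant dans les suites de localisation. Pour une immersion ouverte $j:U \rightarrow X$ entre sch\'emas lisses, la compatibilit\'e de $\theta$ avec le pullback plat $j^*$ sur les groupes de Chow \`a coefficients et avec la fonctorialit\'e contravariante \'etale de $M^c$ (C2) r\'esulte de la naturalit\'e de $\epsilon_X$ \'etablie en \ref{iso_coh_gpe_chow}, compte tenu de la compatibilit\'e de la dualit\'e (C4) aux immersions ouvertes. Pour une immersion ferm\'ee $i:Z \rightarrow X$ entre sch\'emas lisses, la compatibilit\'e de $\theta$ avec le pushout propre $i_*$ et le morphisme $M^c(Z) \rightarrow M^c(X)$ de (C2) se ram\`ene, par dualit\'e (C4), \`a la proposition \ref{Gysin&pushout} identifiant le pushout $i_*$ au morphisme de Gysin motivique dualis\'e.

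On proc\`ede alors par r\'ecurrence noeth\'erienne. Soit $X$ lissifiable r\'eduit ; la g\'en\'ericit\'e de la lissit\'e en caract\'eristique $0$ fournit un ouvert dense lisse $U \subset X$, de compl\'ementaire ferm\'e r\'eduit $Z$ de dimension strictement plus petite, encore lissifiable. Les deux membres d\'efinissent des th\'eories homologiques de type Borel-Moore, munies d'une part de la suite exacte de localisation (analogue homologique de \eqref{localisation2}) et d'autre part de la suite exacte longue obtenue en tensorisant le triangle distingu\'e (C3) par $F_*\{s\}$ puis en appliquant $\Hom_{\DM}(\un[i],-)$ ; ces suites relient $X$ \`a $U$ (cas lisse) et \`a $Z$ (hypoth\`ese de r\'ecurrence). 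Le point d\'elicat est la commutation des bords : il faut identifier le r\'esidu de Rost $\partial_Z^U$ avec le morphisme de connexion du triangle (C3), ce qui est l'analogue, dans le cadre de l'homologie de Borel-Moore, du lemme \ref{lm:comp_Gersten_loc} transport\'e par la dualit\'e forte (C4). Cette compatibilit\'e acquise, la transformation naturelle construite sur les sch\'emas lisses s'\'etend de mani\`ere unique aux sch\'emas lissifiables, le lemme des cinq montre que l'extension reste un isomorphisme, et la naturalit\'e du proc\'ed\'e par rapport aux morphismes transverses (\emph{cf.} \ref{rem:rappels_pf}) assure l'ind\'ependance vis-\`a-vis du plongement choisi. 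L'obstacle principal r\'eside donc dans cette identification des deux morphismes de bord.
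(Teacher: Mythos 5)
Votre d\'evissage diverge de celui du texte, et c'est pr\'ecis\'ement cette divergence qui cr\'ee une lacune r\'eelle. En stratifiant $X$ lui-m\^eme (ouvert dense lisse $U$, ferm\'e compl\'ementaire $Z$ de dimension moindre, en g\'en\'eral singulier), l'\'echelle de suites exactes que vous comparez fait intervenir le bord $\partial_Z^U : A_i(U,\phi)_s \rightarrow A_{i-1}(Z,\phi)_s$ \`a valeurs dans un sch\'ema singulier et, c\^ot\'e motivique, le morphisme de connexion $M^c(U) \rightarrow M^c(Z)[1]$ du triangle (C3). Or votre justification de la commutation de ce carr\'e --- le lemme \ref{lm:comp_Gersten_loc} \og transport\'e par la dualit\'e forte (C4) \fg --- ne tient pas : (C4) n'est disponible que pour les sch\'emas \emph{lisses}, et le lemme \ref{lm:comp_Gersten_loc} porte sur le triangle de Gysin d'une paire ferm\'ee lisse, non sur le triangle de localisation de $M^c$ d'une paire singuli\`ere. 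Le carr\'e crucial de votre r\'ecurrence reste donc sans preuve. De plus, l'hypoth\`ese de r\'ecurrence ne fournit que l'\emph{existence} d'un isomorphisme $\theta_Z$, pas sa compatibilit\'e aux pushouts propres et aux bords ; pour boucler la r\'ecurrence il faudrait renforcer l'hypoth\`ese et \'etablir ces naturalit\'es pour des sch\'emas singuliers, ce qui revient essentiellement \`a l'\'enonc\'e complet (avec toute sa fonctorialit\'e). Enfin, \og s'\'etend de mani\`ere unique \fg est une p\'etition de principe : dans une cat\'egorie triangul\'ee, un morphisme induit entre c\^ones n'existe ni n'est unique automatiquement \`a partir des carr\'es ext\'erieurs ; c'est exactement la subtilit\'e que la note de bas de page du corollaire (ind\'ependance du plongement) signale. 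Vos v\'erifications pr\'eliminaires dans le cas lisse (immersions ouvertes via \ref{iso_coh_gpe_chow}, immersions ferm\'ees entre sch\'emas lisses via \ref{Gysin&pushout}) sont, elles, correctes et pertinentes.

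Le d\'evissage du texte \'evite pr\'ecis\'ement cet \'ecueil : pour $X$ ferm\'e dans un sch\'ema lisse $M$, le compl\'ementaire ouvert $U = M \setminus X$ est \emph{automatiquement lisse}. Dans les deux suites de localisation (celle de Rost, forme homologique de \eqref{eq:se_loc_modl}--\eqref{localisation2}, et celle obtenue de (C3) apr\`es tensorisation par $F_*\{s\}$ et application de $\Hom_{\DM}(\un[i],-)$), les termes en $X$ sont les inconnues, de sorte que la seule compatibilit\'e \`a v\'erifier entre termes \emph{connus} est le carr\'e de l'immersion ouverte $j : U \rightarrow M$ entre sch\'emas lisses, c'est-\`a-dire la compatibilit\'e de \ref{iso_coh_gpe_chow} (jointe \`a (C4) pour $M$ et $U$) au pullback plat ; aucune r\'ecurrence, aucun bord aboutissant \`a un sch\'ema singulier. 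Le lemme des cinq conclut alors, l'ind\'ependance du plongement restant le seul point d\'elicat. Un sympt\^ome de la lacune de votre argument : hormis la remarque que $Z$ reste lissifiable, vous n'utilisez jamais le plongement dans un sch\'ema lisse ; s'il \'etait complet, votre raisonnement d\'emontrerait l'\'enonc\'e pour tout sch\'ema alg\'ebrique, ce que le texte n'obtient que par le \og truc de Jouanolou \fg (voir la remarque qui suit le corollaire).
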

En utilisant la fonctorialité de la suite exacte longue de localisation
 et du motif à support compact (\emph{cf.} (C2)),
 cet isomorphisme est fonctoriel covariant par rapport aux morphismes
 propres, contravariant par rapport aux morphismes étales.
Si on utilise de plus la fonctorialité étendue aux morphismes plats
 du motif à support compact (\emph{cf.} \cite[4.2.4]{V1}), 
 on obtient même la fonctorialité contravariante de cet isomorphisme 
 par rapport aux morphismes plats.

\rem
\begin{enumerate}
\item Le membre de droite de l'isomorphisme du corollaire précédent
 mérite le nom d'homologie de Borel-Moore de $F_*$ en degré $i$
 (et $\GG$-twist $s$). L'isomorphisme est alors à comparer avec
 l'isomorphisme entre homologie motivique de Borel-Moore et
 groupes de Chow supérieurs (\emph{cf.} \cite[4.2.9]{V1}).
\item On peut se passer de l'hypothèse que $X$ admet un plongement
 dans un schéma lisse en utilisant le \og truc de Jouanolou \fg.
\item Le corollaire précédent utilise uniquement les propriétés
(C1) à (C4) du motif à supports compacts. Quitte à travailer
rationellement, on peut obtenir ces propriétés (\emph{cf.} \cite{CD3})
et prolonger ainsi la conclusion du corollaire au cas d'un corps
parfait quelconque.
\end{enumerate}

\section{Motifs birationnels}

Rappelons la définition des \emph{complexes motiviques birationnels}
 due à B. Kahn et R. Sujatha (\emph{cf.} \cite{KS}):
\begin{df}[Kahn, Sujatha]
On note $\DMb$ (resp. $\DMgmb$) la localisation de la catégorie $\DMe$
 (resp. enveloppe pseudo-abélienne de la localisation de $\DMgme$)
 par rapport à la classe de flèches formée des morphismes
 $M(U) \xrightarrow{j_*} M(X)$ où $j$ est une immersion ouverte
 et $X$ un schéma lisse.
\end{df}
Suivant \cite{KS}, le foncteur canonique $\DMe \rightarrow \DMb$
est noté $\nu_{\leq 0}$ -- \emph{cf.} \cite[6.3]{KS}.
La catégorie $\DMb$ est une localisation de Bousfield à
 gauche de la catégorie homotopique $\DMe$. On en déduit que
 $\nu_{\leq 0}$ admet un adjoint à droite $i:\DMb \rightarrow \DMe$
 pleinement fidèle qui identifie
 la catégorie $\DMb$ à la sous-catégorie
 de $\DMe$ formée des complexes motiviques $K$ tels que
 pour toute immersion ouverte $j:U \rightarrow X$ dans un schéma lisse $X$,
 le morphisme induit 
 $H^*(X,K) \xrightarrow{j^*} H^*(U,K)$  est un isomorphisme.
Un tel complexe $K$ est appelé suivant Kahn et Sujatha un
 \emph{complexe motivique birationnel}.

\rem D'après un fait général sur les localisations de Bousfield (\emph{cf.} \cite[\S 5]{CD1}),
 la catégorie $\DMgmb$ s'identifie à la sous-catégorie pleine des objets compacts
 de $\DMb$ par le foncteur canonique $\DMgmb \rightarrow \DMb$ -- \emph{cf.} \cite[6.4.c]{KS}.
Par ailleurs, la catégorie $\DMb$ s'identifie à la localisation de $\DMe$ par
 rapport à la sous-catégorie pleine\footnote{Cette sous-catégorie est pleine
 d'après le théorème de simplification de Voeovdsky: voir \ref{num:simplification}.}
 $\DMe(1)$ -- \emph{cf.} \cite[6.4.a]{KS}.

\num Soit $X$ (resp. $Y$) un schéma lisse connexe de corps des fonctions $K$
 (resp. $L$).
Pour tout ouvert $U$ (resp. $V$) de $X$ (resp. $Y$),
 on obtient un morphisme
\begin{align*}
\varphi_{U,V}:\Hom_{\DMe}(M(U),M(V)) \rightarrow & \Hom_{\DMb}(M(U),M(V)) \\
& \simeq \Hom_{\DMb}(M(X),M(Y)).
\end{align*}
Passant à la limite, on en déduit un morphisme canonique
$$
\varphi:\Hom_{\DMgmo}(M(K),M(L)) \rightarrow \Hom_{\DMb}(M(X),M(Y)).
$$
\begin{prop}
Le morphisme $\varphi$ est surjectif.
\end{prop}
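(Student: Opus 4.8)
Le plan est de d�visser les deux membres via la description de $\DMgmo$ comme cat�gorie de pro-objets. Par d�finition du Hom dans une cat�gorie de pro-objets,
$$
\Hom_{\DMgmo}(M(K),M(L))=\varprojlim_V \varinjlim_U \Hom_{\DMe}(M(U),M(V)),
$$
o� $U$ (resp. $V$) parcourt les ouverts de $X$ (resp. $Y$), et $\varphi$ est induit terme � terme par le foncteur de localisation $\nu_{\leq 0}:\DMe \to \DMb$, compte tenu des isomorphismes $M(U)\simeq M(X)$ et $M(V)\simeq M(Y)$ dans $\DMb$. La surjectivit� de $\varphi$ se ram�ne donc � deux �nonc�s: d'une part, pour chaque mod�le $V$, la fl�che $\varinjlim_U \Hom_{\DMe}(M(U),M(V)) \to \Hom_{\DMb}(M(X),M(Y))$ est surjective; d'autre part, les ant�c�dents peuvent �tre choisis de mani�re compatible lorsque $V$ d�cro�t.

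D'abord je traiterais le cas $V=Y$. Via l'adjonction de Bousfield $\nu_{\leq 0}$ et son adjoint pleinement fid�le $i$, et la remarque $\DMb=\DMe/\DMe(1)$, on a $\Hom_{\DMb}(M(X),M(Y))=\Hom_{\DMe}(M(X),LM(Y))$ o� $L=i\nu_{\leq 0}$. Le complexe $LM(Y)$ �tant birationnel, son hypercohomologie sur $X$ ne d�pend que du point g�n�rique; comme les foncteurs fibres sont exacts (\ref{foncteurs_fibres_fhtp}), on obtient $\Hom_{\DMb}(M(X),M(Y))=\widehat{\uH^0(LM(Y))}(K)$, la fibre en $K$. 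Il s'agit alors de voir que l'unit� $h_0(Y)\to \uH^0(LM(Y))$ est un �pimorphisme de faisceaux homotopiques, ce qui donnera la surjectivit� sur les fibres puisque $\varinjlim_U \Hom_{\DMe}(M(U),M(Y))=\widehat{h_0(Y)}(K)$. J'�tablirais cette $t$-exactitude � droite de $L$ en reliant $LM(Y)$ aux immersions ouvertes: le noyau de la localisation �tant engendr� par les c�nes des morphismes $M(U)\to M(X)$, le point d�cisif est que le $\uH^0$ d'un tel c�ne, �gal � $\coKer(h_0(U)\to h_0(X))$, est nul d'apr�s la proposition \ref{prop:h_0(imm_ouv)=epi}.

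Ensuite, pour un mod�le $V\subseteq Y$ quelconque, je rel�verais l'ant�c�dent obtenu. Partant d'un $a:M(U_0)\to M(Y)$ repr�sentant $g$, je montrerais qu'apr�s restriction � un ouvert assez petit $U\subseteq U_0$, le compos� $M(U)\xrightarrow{a}M(Y)$ se factorise � travers $M(V)\to M(Y)$. L'obstruction vit dans $\varinjlim_U\Hom_{\DMe}(M(U),\mathrm{cone}(M(V)\to M(Y)))=H^0(\spec K,\mathrm{cone})=\widehat{\uH^0(\mathrm{cone})}(K)$; or $\uH^0(\mathrm{cone}(M(V)\to M(Y)))=\coKer(h_0(V)\to h_0(Y))$ est \emph{nul au niveau des faisceaux} par la proposition \ref{prop:h_0(imm_ouv)=epi}, donc l'obstruction dispara�t. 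Le m�me argument, appliqu� � une inclusion $V'\subseteq V$ de mod�les de $L$, montre que les fl�ches de transition du syst�me des ant�c�dents sont surjectives; un syst�me projectif d'ensembles non vides � applications de transition surjectives ayant une limite non vide, on obtient la famille compatible cherch�e, d'o� un ant�c�dent dans $\Hom_{\DMgmo}(M(K),M(L))$.

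Le principal obstacle sera l'�nonc� de $t$-exactitude � droite de la localisation birationnelle, c'est-�-dire la surjectivit� de $h_0(Y)\to \uH^0(LM(Y))$ du deuxi�me paragraphe: contrairement � l'annulation du $\uH^0$ des c�nes d'immersions ouvertes, qui est imm�diate via \ref{prop:h_0(imm_ouv)=epi}, il faut contr�ler l'effet de $L$ sur le $\uH^0$, � savoir montrer que la localisation n'ajoute pas de sections en degr� $0$. Je m'attends � ce que cela d�coule de la description explicite de Kahn--Sujatha (\cite{KS}) des morphismes dans $\DMb$ par des fractions dont les d�nominateurs sont des immersions ouvertes, qui ram�ne pr�cis�ment $\uH^0(LM(Y))$ � un quotient birationnel de $h_0(Y)$.
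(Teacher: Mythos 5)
Your overall architecture is the same as the paper's: prove surjectivity of $\varphi_V:\ilim{U}\Hom_{\DMe}(M(U),M(V))\to\Hom_{\DMb}(M(X),M(Y))$ for each model $V$, then show the transition maps of the system of antecedents are surjective, and conclude by a Mittag--Leffler type limit argument. Your second half is correct and is essentially the paper's argument in derived-category clothing: your obstruction computation (the obstruction to factoring $M(U)\to M(Y)$ through $M(V)$ lies, in the limit over $U$, in the fibre at $K$ of $\uH^0\big(\mathrm{cone}(M(V)\to M(Y))\big)=\coKer\big(h_0(V)\to h_0(Y)\big)$, which vanishes by Proposition \ref{prop:h_0(imm_ouv)=epi}) amounts exactly to the paper's statement that the transition maps $\hhlrep_0(V')(K)\to \hhlrep_0(V)(K)$ are surjective, followed by the same Mittag--Leffler conclusion (the paper cites \cite{EGA3}).

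The genuine gap is your first step, the case $V=Y$, and it is the heart of the matter. The paper disposes of it in one line by invoking the fullness of $\nu_{\leq 0}$ (taken from \cite{KS}), which makes every $\varphi_{U,V}$ surjective at once. You instead try to prove that the unit $M(Y)\to LM(Y)$ is surjective on $\uH^0$, and your proposed \emph{point d\'ecisif} --- that $\uH^0\big(\mathrm{cone}(M(U)\to M(X))\big)=0$ --- does not yield this. The kernel of $\nu_{\leq 0}$ is the \emph{localizing} subcategory generated by these cones; it is stable under shifts in both directions (as well as sums and extensions), so the fact that the generating cones have cohomology concentrated in degrees $\leq -1$ says nothing about the cone of the unit $M(Y)\to LM(Y)$: a left Bousfield localization need not preserve t-non-positivity (compare topology, where localizations of connective spectra need not be connective). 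Hence the ``right t-exactness of $L$'' is a real statement requiring proof, and you concede as much in your final paragraph, deferring to an expected description in \cite{KS} of morphisms of $\DMb$ as fractions with open-immersion denominators. But that description is precisely equivalent to the surjectivity/fullness statement you are trying to establish --- it is the nontrivial external input that the paper's proof invokes. As written, your argument therefore assumes its key step rather than proving it; to repair it, either quote the fullness of $\nu_{\leq 0}$ from \cite{KS} directly (which reduces your first step to the paper's proof), or genuinely prove that the maps $M(U)\to M(X)$, $U$ dense open, are cofinal among the maps with cone killed by $\nu_{\leq 0}$, which your sketch does not do.
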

\begin{proof}
Le foncteur $\nu_{\leq 0}$ est plein. Donc pour tout couple $(U,V)$,
 $\varphi_{U,V}$ est surjectif. Le fonteur limite inductive préserve
 les épimorphismes ; ainsi, pour tout $V \subset Y$, le morphisme
$$
\varphi_V:\ilim{U \subset X} \Hom_{\DMe}(M(U),M(V)) \rightarrow \Hom_{\DMb}(M(X),M(Y))
$$
est surjectif.
D'après \cite[3.4.4]{Deg7}, la source de ce morphisme est canoniquement isomorphe
à $\hhlrep_0(V)(K)$.
Si on considère de plus une immersion ouverte $j:V' \rightarrow V$,
 il résulte de la proposition \ref{prop:h_0(imm_ouv)=epi} que le morphisme induit
$$
\hhlrep_0(V')(K) \xrightarrow{j_*} \hhlrep_0(V)(K)
$$
est un épimorphisme.
Il en résulte que le système projectif
 $(\Ker(\varphi_V))_{V \subset Y}$
 vérifie la condition de Mittag-Leffler ce qui permet de conclure
  (\emph{cf.} par exemple \cite[0-13.2.2]{EGA3}).
\end{proof}
Le morphisme $\varphi$ n'est pas injectif en général car pour une
 valuation $v$ sur un corps de fonctions $E$ et une uniformisante
 $\pi$ de $v$, le morphisme
$$
s_v^\pi=\gamma_\pi \circ \partial_v\{-1\}:M(\kappa_v)
 \rightarrow M(E)
$$
dépend de $\pi$ alors qu'il n'en dépend pas dans les motifs birationnels
 d'après \cite{KS}.
On en déduit donc un morphisme plein, non fidèle, de la catégorie
des motifs génériques \emph{sans twist} vers la catégorie des
motifs birationnels.

\num Soit $E$ un corps de fonctions et $\nu$ un valuation éventuellement
 non discrète sur $E$. On note $\mathcal O$ l'anneau des entiers de $\nu$
 qui n'est donc pas nécessairement noethérien.
La proposition précédente est intéressante
 car elle montre que $\nu$ détermine au moins un morphisme de spécialisation 
 $s_\nu:M(\kappa_v) \rightarrow M(E)$ -- \emph{cf.} \cite{KS}.

Plus canoniquement, on peut attacher à $\nu$ un morphisme résidu.
En effet, $\mathcal O$ est réunion filtrante de ses sous-anneaux
 de valuations $\mathcal O_v$ qui sont des $k$-algèbres de type fini,
 correspondant à une valuation géométrique $v$.
Notons $\cI_\nu$ l'ensemble ordonné de ces sous-$k$-algèbres $\mathcal O_v$,
 indexé par les valuations $v$ correspondantes.
Quitte à remplacer $\cI_\nu$ par une partie cofinale,
 on peut supposer que pour tout $v \in \cI_\nu$, $\mathrm{Frac}(\cO_v)=E$
 et la fibre spéciale de $\spec{\cO} \rightarrow \spec{\cO_v}$ est non
 vide.
A tout valuation $v$ de $\cI_\nu$, de corps résiduel $\kappa_v$,
 on associe un morphisme résidu
$$
\partial_v:M(\kappa_v)\{1\} \rightarrow M(E).
$$
Ce morphisme est naturel par rapport à toute inclusion $\cO_v \subset \cO_w$:
 d'après la formule (R3b) de \cite[4.4.7]{Deg5}, le diagramme suivant est commutatif
$$
\xymatrix@R=15pt@C=22pt{
M(\kappa_w)\{1\}\ar^-{\partial_w}[r]\ar_{\bar \varphi^\sharp}[d] & M(E)\ar@{=}[d] \\
M(\kappa_v)\{1\}\ar^-{\partial_v}[r] & M(E),
}
$$
où $\bar \varphi:\kappa_v \rightarrow \kappa_w$ est le morphisme induit sur
 les corps résiduel. En effet, l'extension $\cO_w/\cO_v$ est non ramifiée.
Finalement, on peut donc poser
$$
\partial_\nu=\pplim{v \in \cI_\nu} \partial_v:M(\kappa_\nu)\{1\} \rightarrow M(E).
$$

\rem Cet exemple montre que les morphismes (D1) à (D4) dégagés
 dans la catégorie des motifs génériques (\emph{cf.} \cite{Deg5}) n'engendrent pas,
 tous les morphismes entre motifs génériques -- du moins par composition \emph{finie}.
Il est d'autant plus remarquable qu'ils permettent de décrire complètement
 un module homotopique en tant que \emph{morphismes de spécialisations}
 entre ses fibres.

\bibliographystyle{smfalpha}
\bibliography{modhtp}

\end{document}